\numberwithin{equation}{section}
\newtheorem{Theorem}{Theorem}[section]
\newtheorem{Corollary}[Theorem]{Corollary}
\newtheorem{Lemma}[Theorem]{Lemma}
\newtheorem{Proposition}[Theorem]{Proposition}
 { \theoremstyle{definition}
\newtheorem{Definition}[Theorem]{Definition}

\newtheorem{Remark}[Theorem]{Remark} }
\def\dd{{\rm d}}
\begin{document}

\allowdisplaybreaks

\newcommand{\arXivNumber}{1606.03948}

\renewcommand{\PaperNumber}{061}

\FirstPageHeading

\ShortArticleName{An Energy Gap for Complex Yang--Mills Equations}

\ArticleName{An Energy Gap for Complex Yang--Mills Equations}

\Author{Teng HUANG~$^{\dag\ddag}$}

\AuthorNameForHeading{T.~Huang}

\Address{$^\dag$~Key Laboratory of Wu Wen-Tsun Mathematics, Chinese Academy of Sciences, P.R.~China}
\Address{$^\ddag$~School of Mathematical Sciences, University of Science and Technology of China, P.R.~China}
\EmailD{\href{mailto:oula143@mail.ustc.edu.cn}{oula143@mail.ustc.edu.cn}}

\ArticleDates{Received May 31, 2017, in f\/inal form July 26, 2017; Published online August 08, 2017}

\Abstract{We use the energy gap result of pure Yang--Mills equation
[Feehan P.M.N., \textit{Adv. Math.} \textbf{312} (2017), 547--587] to prove another energy gap result of complex Yang--Mills equations
[Gagliardo M., Uhlenbeck K., \textit{J.~Fixed Point Theory Appl.} \textbf{11} (2012), 185--198], when Riemannian manifold $X$ 
of dimension $n\geq 2$ satisf\/ies certain conditions.}

\Keywords{complex Yang--Mills equations; energy gap; gauge theory}

\Classification{58E15; 81T13}

\section{Introduction}

Let $X$ be an oriented $n$-manifold endowed with a smooth Riemannian metric~$g$. Let $P$ be a~principle $G$-bundle over~$X$. The structure group $G$ is assumed to be a compact Lie group with Lie algebra $\mathfrak{g}$. We denote a connection on $P$ as $A$ and its curvature as $F_{A}$. Let $\mathfrak{g}_{P}$ be the adjoint bundle of $P$. We def\/ine by $\dd_{A}$ the exterior covariant derivative on section of $\Lambda^{\bullet}T^{\ast}X\otimes(P\times_{G}\mathfrak{g}_{P})$ with respect to the connection $A$. The curvature $\mathcal{F}_{\mathcal{A}}$ of the complex connection $\mathcal{A}:=\dd_{A}+\sqrt{-1}\phi$, {$\phi\in\Omega^{1}(X,\mathfrak{g}_{P})$} is a two-form with values in $P\times_{G}(\mathfrak{g}_{P}^{{\mathbb C}})$:
\begin{gather*}\mathcal{F}_{\mathcal{A}}=\big[\big(\dd_{A}+\sqrt{-1}\phi\big)\wedge\big(\dd_{A}+\sqrt{-1}\phi\big)\big]
=F_{A}-\tfrac{1}{2}[\phi\wedge\phi]+\sqrt{-1}\dd_{A}\phi.\end{gather*}
The complex Yang--Mills functional is def\/ined in any dimension as the norm squared of the complex curvature \cite[Section~3]{Gagliardo/Uhlenbeck:2012}
\begin{gather*}{\rm YM}_{{\mathbb C}}(A,\phi):=\int_{X}|\mathcal{F}_{\mathcal{A}}|^{2}=\int_{X}\big(|F_{A}-\phi\wedge\phi|^{2}+|\dd_{A}\phi|^{2}\big).\end{gather*}
This functional reduces to the pure Yang--Mills functional when the extra f\/ield $\phi$ vanishes. The Euler--Lagrange equations for this functional are
\begin{gather}
\dd^{\ast}_{A}(F_{A}-\phi\wedge\phi)+(-1)^{n}\ast[\phi,\ast \dd_{A}\phi]=0,\nonumber\\
\dd_{A}^{\ast}\dd_{A}\phi-(-1)^{n}\ast[\phi,\ast(F_{A}-\phi\wedge\phi)]=0.\label{E1.1}
\end{gather}
They can also be succinctly written as
\begin{gather*}\dd^{\ast}_{\mathcal{A}}\mathcal{F}_{\mathcal{A}}=0.\end{gather*}
These equations are not elliptic, even after the real gauge-equivalence, so it is necessary to add the moment map condition \begin{gather*}\dd_{A}^{\ast}\phi=0.\end{gather*}
In this article, we call $\dd_A+ \sqrt{-1}\phi$ the solution of the complex Yang--Mills equations, it not only satisf\/ies equations~(\ref{E1.1}), but also satisf\/ies the moment map condition. These equations called \emph{complex} just because the connection $\dd_A+\sqrt{-1}\phi$ is a $1$-form with value in $P\times_{G}(\mathfrak{g}_{\mathbb{C}})$, the base manifold is always a real Riemannian manifold.
\begin{Remark}
 Some examples of complex Yang--Mills equations:
 \begin{enumerate}\itemsep=0pt
 \item[(1)] Kapustin--Witten equations \cite{Gagliardo/Uhlenbeck:2012}
\begin{gather*}
(F_{A}-\phi\wedge\phi)^{+}=0,\qquad (\dd_{A}\phi)^{-}=0,\qquad \dd_{A}\ast\phi=0
\end{gather*}
on four-dimensional manifolds.
\item[(2)] On a stable Higgs bundle $(E,\theta)$, there exists a Hermitian metric $h$ such that the Hitchin--Simpson connection $\dd_{A_{h}}+\theta+\theta^{\ast,h}$ satisf\/ies the Einstein condition \cite{Hitchin,Simpson1988}
\begin{gather*}\sqrt{-1}\Lambda_{\omega}\big(F_{A_{h}}+\big[\theta,\theta^{\ast,h}\big]\big)-\lambda \operatorname{Id}_{E}=0,\end{gather*}
the connection $\dd_{A_{h}}+\theta+\theta^{\ast,h}$ also satisf\/ies complex Yang--Mills equations.
\end{enumerate}
\end{Remark}
In particular, it is easy to see that if $\phi=0$, the complex Yang--Mills equations will reduce to the pure Yang--Mills equation $\dd_{A}^{\ast}F_{A}=0$. Many researchers have studied the energy gap of Yang--Mills equation. The motivation of these gap results is partly from physics and partly from math which would be help to better understand the behavior of the Yang--Mills functional near its critical points. In~\cite{Bourguignon/Lawson,Bourguignon/Lawson/Simons,Dodziuk/Min-Oo,Gerhardt,Min-Oo}, they all require some positive hypotheses on the curvature tensors of a Riemannian metric. In~\cite{Feehan2015}, Feehan applied the Lojasiewicz--Simon gradient inequa\-li\-ty~\cite[Theorem~3.2]{Feehan2015} to remove the positive hypothesis on the Riemannian curvature tensors. In~\cite{Huang2017}, the author gave another proof of energy gap theorem of pure Yang--Mills equation without using Lojasiewicz--Simon gradient inequality. We also want to understand the behaviour of the \emph{complex Yang--Mills} functional near its critical points. It is an interesting question to consider, whether the complex Yang--Mills equations have the energy gap phenomenon. In this article, we give a positive answer to this question when $X$ satisf\/ies certain conditions.
\begin{Theorem}\label{T1.1}
Let $X$ be a closed, oriented, smooth Riemannian manifold of dimension $n\geq2$ with smooth Riemannan metric $g$, $P$ be a $G$-bundle over $X$ with $G$ being a compact Lie group, let $2p>n$ when $n\neq 2,4$ or $p\geq2$ when $n=2,4$. Then there is a positive constant $\varepsilon=\varepsilon(n,p,g)$ with the following significance. Suppose that all flat connections on $P$ are non-degenerate in the sense of Definition~{\rm \ref{D3.6}}. If the pair $(A,\phi)$ is a~$C^{\infty}$-solution of complex Yang--Mills equations over $X$, the curvature $F_{A}$ of connection $A$ obeys
\begin{gather*}
\|F_{A}\|_{L^{p}(X)}\leq\varepsilon,
\end{gather*}
then $A$ is a flat connection and $\phi$ must vanish.
\end{Theorem}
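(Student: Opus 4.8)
The plan is to prove first that $\phi\equiv0$, and then to invoke the energy gap theorem for the pure Yang--Mills equation. Indeed, once $\phi=0$ the Euler--Lagrange system \eqref{E1.1} collapses to $\dd_A^\ast F_A=0$, so $A$ is a smooth Yang--Mills connection with $\|F_A\|_{L^p(X)}\le\varepsilon$; under the stated hypotheses on $(n,p)$ and the non-degeneracy of the flat connections on $P$, Feehan's theorem \cite{Feehan2015} provides a constant $\varepsilon=\varepsilon(n,p,g)$ below which such a connection must be flat. So everything reduces to showing that $\phi$ vanishes.

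I would first record two facts about the pair $(A,\phi)$. Pairing the second equation of \eqref{E1.1} with $\phi$, integrating over the closed manifold $X$, using the moment map condition $\dd_A^\ast\phi=0$, and invoking the pointwise identity $\big\langle\ast[\phi,\ast\beta],\phi\big\rangle=-\big\langle\beta,[\phi\wedge\phi]\big\rangle$ for $\mathfrak{g}_P$-valued two-forms $\beta$, I expect an identity of the form
\begin{gather*}
\|\dd_A\phi\|_{L^2(X)}^2+c_1\|\phi\wedge\phi\|_{L^2(X)}^2=c_2\int_X\big\langle F_A,\phi\wedge\phi\big\rangle
\end{gather*}
with $c_1>0$; H\"older's inequality together with $|\phi\wedge\phi|\le c|\phi|^2$ then bounds the right-hand side by $C\|F_A\|_{L^p(X)}\|\phi\|_{L^{2p'}(X)}^2$, where $1/p+1/p'=1$. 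Separately, the Bochner--Weitzenb\"ock formula on $\Omega^1(X,\mathfrak{g}_P)$ gives, for every $\psi$,
\begin{gather*}
\|\nabla_A\psi\|_{L^2(X)}^2=\|\dd_A\psi\|_{L^2(X)}^2+\|\dd_A^\ast\psi\|_{L^2(X)}^2-\int_X\mathrm{Ric}(\psi,\psi)-\int_X\big\langle F_A\cdot\psi,\psi\big\rangle,
\end{gather*}
and, combined with the Kato inequality and the Sobolev embedding $W^{1,2}(X)\hookrightarrow L^{2p'}(X)$ --- available precisely because the hypotheses on $(n,p)$ amount to $2p'\le 2n/(n-2)$, interpreted as $2p'<\infty$ when $n=2$ --- it yields $\|\psi\|_{L^{2p'}(X)}^2\le C\big(\|\dd_A\psi\|_{L^2(X)}^2+\|\dd_A^\ast\psi\|_{L^2(X)}^2+\|\psi\|_{L^2(X)}^2\big)$.

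The one non-elementary ingredient is a uniform Poincar\'e inequality: I claim that if $\varepsilon$ is small enough there is a constant $C$, depending only on $(X,g,P)$, so that every connection $A$ with $\|F_A\|_{L^p(X)}\le\varepsilon$ satisfies $\|\psi\|_{L^2(X)}\le C\big(\|\dd_A\psi\|_{L^2(X)}+\|\dd_A^\ast\psi\|_{L^2(X)}\big)$ for all $\psi\in\Omega^1(X,\mathfrak{g}_P)$. One proves this by contradiction. A violating sequence $(A_i,\psi_i)$, with $\|F_{A_i}\|_{L^p}\to0$, $\|\psi_i\|_{L^2}=1$ and $\|\dd_{A_i}\psi_i\|_{L^2}+\|\dd_{A_i}^\ast\psi_i\|_{L^2}\to0$, has, by the facts of the previous paragraph, $\psi_i$ uniformly bounded in the twisted $W^{1,2}$; after suitable gauge transformations (Uhlenbeck's theorem, with no curvature concentration since $\|F_{A_i}\|_{L^{n/2}}\to0$) a subsequence converges to a flat connection $A_\infty$ and a limit $\psi_\infty$ with $\|\psi_\infty\|_{L^2}=1$ and $\dd_{A_\infty}\psi_\infty=\dd_{A_\infty}^\ast\psi_\infty=0$; since $A_\infty$ is non-degenerate in the sense of Definition~\ref{D3.6}, this forces $\psi_\infty=0$, a contradiction. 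Applying the inequality to $\psi=\phi$ and using $\dd_A^\ast\phi=0$ gives $\|\phi\|_{L^2(X)}\le C\|\dd_A\phi\|_{L^2(X)}$.

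To conclude I combine everything. With $\dd_A^\ast\phi=0$, the Weitzenb\"ock identity and the Sobolev bound of the second paragraph, after absorbing the factor $C\|F_A\|_{L^p}\le C\varepsilon$ for $\varepsilon$ small, give $\|\nabla_A\phi\|_{L^2}^2\le C\big(\|\dd_A\phi\|_{L^2}^2+\|\phi\|_{L^2}^2\big)$ and hence $\|\phi\|_{L^{2p'}}^2\le C\big(\|\dd_A\phi\|_{L^2}^2+\|\phi\|_{L^2}^2\big)$; inserting this into the first identity produces $\|\dd_A\phi\|_{L^2}^2\le C\varepsilon\big(\|\dd_A\phi\|_{L^2}^2+\|\phi\|_{L^2}^2\big)$, and the Poincar\'e inequality $\|\phi\|_{L^2}\le C\|\dd_A\phi\|_{L^2}$ then turns this into $\|\dd_A\phi\|_{L^2}^2\le C'\varepsilon\|\dd_A\phi\|_{L^2}^2$. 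Shrinking $\varepsilon$ so that $C'\varepsilon<1$ forces $\dd_A\phi=0$, whence $\phi=0$ by the Poincar\'e inequality again, and we are back in the pure Yang--Mills situation of the first paragraph. The step I expect to be the main obstacle is the uniform Poincar\'e inequality --- this is where the non-degeneracy hypothesis and the exponent restrictions on $p$ genuinely enter, and it rests on Uhlenbeck-type compactness for connections of small $L^p$ curvature; a secondary technical point is pinning down the sign in the algebraic identity for $\big\langle\ast[\phi,\ast\beta],\phi\big\rangle$ so that the constant $c_1$ above is positive.
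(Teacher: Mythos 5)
Your proposal is correct and follows essentially the same route as the paper: first show $\phi\equiv0$ using a uniform Poincar\'e/spectral-gap inequality for $\Delta_A$ on $\Omega^1(X,\mathfrak{g}_P)$ valid when $\|F_A\|_{L^p}$ is small, then invoke Feehan's pure Yang--Mills energy gap. The two expositions differ only in technical implementation. To kill the term $\|\phi\|^2_{L^{2p'}}$ you go through Weitzenb\"ock, Kato and the Sobolev embedding $W^{1,2}\hookrightarrow L^{2p'}$ (essentially Lemma~\ref{C3}), whereas the paper's Proposition~\ref{P4.1} simply invokes the Gagliardo--Uhlenbeck a priori bound $\|\phi\|_{L^\infty}\le C\|\phi\|_{L^2}$ (Theorem~\ref{F6}) to replace $\|\phi\|_{L^{2p'}}$ by $\|\phi\|_{L^2}$ in one step; both are legitimate. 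For the uniform Poincar\'e inequality, which you rightly flag as the crux, the paper proceeds in Section~\ref{section3} by first bounding $\lambda(\Gamma)\ge\lambda>0$ over the compact moduli space of non-degenerate flat connections, then transferring to nearby connections via an explicit $L^{2p}$-continuity estimate for $\lambda(\cdot)$ (Lemma~\ref{C4}) together with Uhlenbeck's small-$L^p$-curvature gauge-fixing Theorem~\ref{T2}; your contradiction-by-compactness sketch is conceptually the same and in fact closely parallels the paper's treatment of the borderline case $n=4$, $p=2$ (Section~\ref{section3.3}), where $2p>n$ fails and Sedlacek's weak compactness replaces Theorem~\ref{T2}. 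One point you should be aware of: your unified ``Uhlenbeck compactness with no bubbling'' sketch needs to be the strong $L^p$ small-curvature theorem when $2p>n$ and the Sedlacek-type argument when $n=4$, $p=2$; the paper makes this case split explicit. Finally, your sign worry is resolved by identity~\eqref{E2.3}/the Weitzenb\"ock formula, which give $\|\dd_A\phi\|^2+\|\dd_A^\ast\phi\|^2+2\|\phi\wedge\phi\|^2=2\langle F_A,\phi\wedge\phi\rangle_{L^2}$ with the positive coefficient on $\|\phi\wedge\phi\|^2$ that your argument requires.
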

\begin{Remark}
In fact, the extra f\/ields will vanish if the Ricci tensor of a Riemannian metric of~$X$ is positive (see Corollary~\ref{C2.3}). For a general Riemannian metric, we will not know whether $\ker{\Delta_{\Gamma}}|_{\Omega^{1}(X,\mathfrak{g}_{P})}=\{0\}$ ($\Gamma$ is any f\/lat connection on~$P$) unless we assume some topological hypotheses for $X$, such as $\pi_{1}(X)=\{1\}$, so $P\cong X\times G$ if and only if $P$ is f\/lat \cite[Theorem~2.2.1]{Donaldson/Kronheimer}. In this case, $\Gamma$ is gauge-equivalent to the product connection and $\ker{\Delta_{\Gamma}}|_{\Omega^{1}(X,\mathfrak{g}_{P})}\cong H^{1}(X,\mathbb{R})$, so the hypothesis for $X$ ensures that the kernel vanishes.
\end{Remark}
If $\mathcal{F}_{\mathcal{A}}=0$, i.e., $F_{A}-\phi\wedge\phi=0$ and $\dd_{A}\phi=0$, then we call $\mathcal{A}=\dd_A+\sqrt{-1}\phi$ a complex f\/lat connection. Now, we denote the moduli space of solutions of complex f\/lat-connections by
\begin{gather*}\mathcal{M}(P,g):=\big\{(A,\phi)\,|\, (F_{A}-\phi\wedge\phi)=0\ \text{and}\ \dd_{A}\phi=\dd^{\ast}_{A}\phi=0\big\}/\mathcal{G}_{P}.\end{gather*}
In particular, the moduli space $M(P,g)$ can be embedded into $\mathcal{M}(P,g)$ via $A\mapsto(A,0)$, where
\begin{gather*}M(P,g):=\{\Gamma\colon F_{\Gamma}=0\}/\mathcal{G}_{P},\end{gather*}
is the moduli space of gauge-equivalence class $[\Gamma]$ of f\/lat connection $\Gamma$ on $P$. Obviously, the complex f\/lat connection also satisf\/ies the complex Yang--Mills equations. Using the gap Theorem~\ref{T1.1} of complex Yang--Mills equations, we can have a gap result for the extra f\/ields as follows
\begin{Corollary}\label{C1.2}
Let $X$ be a closed, oriented, smooth Riemannian manifold of dimension $n\geq2$ with smooth Riemannan metric~$g$, $P$ be a $G$-bundle over $X$ with $G$ being a compact Lie group. Then there is a positive constant $\varepsilon=\varepsilon(g,n)$ with the following significance. Suppose that all flat connections on $P$ are non-degenerate in the sense of Definition~{\rm \ref{D3.6}}. If the pair $(A,\phi)$ is a~$C^{\infty}$-solution of complex flat connection over~$X$, the $L^{2}$-norm of extra field $\phi$ obeys
\begin{gather*}
\|\phi\|_{L^{2}(X)}\leq\varepsilon,
\end{gather*}
then $\phi$ vanishes and $A$ is a flat connection. In particular, if $M(P,g)$ and $\mathcal{M}(P,g)\backslash M(P,g)$ are both not empty, then the moduli space $\mathcal{M}(P,g)$ is non-connected.
\end{Corollary}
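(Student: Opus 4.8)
The plan is to reduce Corollary~\ref{C1.2} to Theorem~\ref{T1.1}: for a complex flat connection one has $F_A=\phi\wedge\phi$, hence the pointwise bound $|F_A|\le c_n|\phi|^2$ and therefore $\|F_A\|_{L^p(X)}\le c_n\|\phi\|_{L^{2p}(X)}^2$. So it suffices to establish an a priori estimate controlling a sufficiently high $L^q$-norm of $\phi$ by $\|\phi\|_{L^2(X)}$ under a smallness assumption, and then to choose the exponent $p=p(n)$ as in Theorem~\ref{T1.1} ($2p>n$ for $n\neq2,4$, and $p=2$ for $n=2,4$).

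First I would exploit that $\phi$ is $\dd_A$-harmonic. From $\dd_A\phi=\dd_A^\ast\phi=0$ and the Bochner--Weitzenb\"ock formula for $\Delta_A=\dd_A\dd_A^\ast+\dd_A^\ast\dd_A$ on $\Omega^1(X,\mathfrak g_P)$ we get
\[
\nabla_A^\ast\nabla_A\phi=-\operatorname{Ric}(\phi)-\mathfrak R_{F_A}(\phi),
\]
where $\mathfrak R_{F_A}$ is the pointwise action of the curvature. Since $|F_A|=|\phi\wedge\phi|\le c_n|\phi|^2$, Kato's inequality $|\dd|\phi||\le|\nabla_A\phi|$ turns this into the differential inequality $\Delta|\phi|\le C_g\big(|\phi|+|\phi|^3\big)$ in the weak sense. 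A Moser iteration — or, equivalently, an elliptic bootstrap run through the Sobolev exponents of $X$ — then yields $\varepsilon_0=\varepsilon_0(n,g)>0$ and $C=C(n,g)$ such that $\|\phi\|_{L^2(X)}\le\varepsilon_0$ implies $\|\phi\|_{C^0(X)}\le C\|\phi\|_{L^2(X)}$, the cubic term being absorbed precisely because $\|\phi\|_{L^2(X)}$ is small. Combining this with $\int_X|\phi|^{2p}\le\|\phi\|_{C^0(X)}^{2p-2}\|\phi\|_{L^2(X)}^2$ gives $\|F_A\|_{L^p(X)}\le C'\|\phi\|_{L^2(X)}^2$ with $C'=C'(n,g)$.

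Now let $\varepsilon_{\mathrm{YM}}=\varepsilon_{\mathrm{YM}}(n,g)$ be the constant of Theorem~\ref{T1.1} for this $p$, and set $\varepsilon:=\min\{\varepsilon_0,(\varepsilon_{\mathrm{YM}}/C')^{1/2}\}$, which depends only on $(g,n)$. If $(A,\phi)$ is a complex flat connection with $\|\phi\|_{L^2(X)}\le\varepsilon$, then $\|F_A\|_{L^p(X)}\le\varepsilon_{\mathrm{YM}}$; since a complex flat connection solves the complex Yang--Mills equations, Theorem~\ref{T1.1} (under the non-degeneracy hypothesis of Definition~\ref{D3.6}) forces $A$ flat and $\phi\equiv0$. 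For the last assertion, observe that $[(A,\phi)]\mapsto\|\phi\|_{L^2(X)}$ is gauge-invariant and continuous on $\mathcal M(P,g)$; by the gap just proved, $\{\|\phi\|_{L^2}<\varepsilon\}=\{\|\phi\|_{L^2}=0\}$, which is therefore both open and closed in $\mathcal M(P,g)$ and equals the image of $M(P,g)$. As $M(P,g)$ and its complement $\mathcal M(P,g)\setminus M(P,g)$ are both assumed nonempty, $M(P,g)$ is a proper nonempty clopen subset, so $\mathcal M(P,g)$ is disconnected.

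The one genuinely nontrivial step is the a priori bound $\|\phi\|_{C^0(X)}\le C\|\phi\|_{L^2(X)}$: one must run the Moser/Sobolev iteration with constants uniform in the connection $A$, which is itself varying with $\phi$, and the cubic self-interaction $|\phi|^3$ has to be controlled — this is exactly why the hypothesis is placed on $\|\phi\|_{L^2(X)}$ and why the borderline cases $n=2,4$ require the special choice $p=2$, where the subcritical embedding $W^{1,2}\hookrightarrow L^4$ makes the iteration close. Everything else is bookkeeping with the constants.
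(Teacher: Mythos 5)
Your argument is essentially the paper's own: reduce via $F_A=\phi\wedge\phi$ and the a~priori bound $\|\phi\|_{L^\infty(X)}\le C\|\phi\|_{L^2(X)}$ to a smallness condition on $\|F_A\|_{L^p(X)}$, apply Theorem~\ref{T1.1}, and close the topological statement by observing that $\|\phi\|_{L^2}$ is gauge-invariant, continuous, and (by the gap) separates $M(P,g)$ from its complement. The paper simply quotes Theorem~\ref{F6} (Gagliardo--Uhlenbeck) for the $L^\infty$ estimate; you re-derive it by Moser iteration, and in doing so misread the sign of the cubic term. In the identity~\eqref{Y1}, the term $\ast[\ast(\phi\wedge\phi),\phi]$ pairs with $\phi$ to produce $+2|\phi\wedge\phi|^2\ge 0$ pointwise --- this is exactly what makes the integrated identity~\eqref{E2.3} hold --- so the resulting pointwise inequality is of the form $\Delta|\phi|^2\lesssim|\phi|^2$, not $\Delta|\phi|\lesssim|\phi|+|\phi|^3$. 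The cubic nonlinearity is favourable rather than adversarial, the $L^\infty$ bound is unconditional, and your threshold $\varepsilon_0$ is superfluous; it is harmless since you take a minimum, but the explanatory remarks (``the cubic self-interaction has to be controlled,'' ``this is why the hypothesis is placed on $\|\phi\|_{L^2}$'') are wrong. Likewise, the requirement $p\geq 2$ when $n=2,4$ in Theorem~\ref{T1.1} has nothing to do with closing a Moser iteration for $\phi$; it originates in the hypotheses of the eigenvalue lower bounds (Theorems~\ref{T3.10} and~\ref{T3.16}) underlying Proposition~\ref{P4.1}. With these two misattributions set aside, the proof is correct and matches the paper's.
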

The organization of this paper is as follows. In Section~\ref{section2}, f\/irst we set our notations and recall some basic def\/initions in dif\/ferential geometry. Next, we recall some identities and some estimates for the solutions of complex Yang--Mills equations which were proved by Gagliardo and Uhlenbeck~\cite{Gagliardo/Uhlenbeck:2012}. Finally, we recall an energy gap result of the pure Yang--Mills equation due to Feehan~\cite{Feehan2015}. Since the Theorem~\ref{T2.4} plays an essential role in our proof of our main result, we provide more details to prove the theorem by another method. In Section~\ref{section3}, we def\/ine the least eigenvalue $\lambda(A)$ of $\dd_{A}\dd^{\ast}_{A}+\dd^{\ast}_{A}\dd_{A}|_{\Omega^{1}(X,\mathfrak{g}_{P})}$ with respect to connection~$A$. We extend the idea of Feehan~\cite{Feehan2014.09} to prove that $\lambda(A)$ has a lower bound that is uniform with respect to~$[A]$ obeying $\|F_{A}\|_{L^{p}(X)}\leq\varepsilon$ for a small enough $\varepsilon=\varepsilon(g,n,p)$ under some conditions for $g$, $G$, $P$, and~$X$. We conclude Section~\ref{section4} with the proofs of Theorem~\ref{T1.1} and Corollary~\ref{C1.2}.

\section{Fundamental preliminaries}\label{section2}
We shall generally adhere to the now standard gauge-theory conventions and notation of Donaldson and Kronheimer \cite{Donaldson/Kronheimer} and Feehan \cite{Feehan2015}. Throughout our article, $G$ denotes a compact Lie group and $P$ a smooth principal $G$-bundle over a compact Riemannnian manifold $X$ of dimension $n\geq2$ and endowed with Riemannian metric $g$, $\mathfrak{g}_{P}$ denote the adjoint bundle of $P$, endowed with a $G$-invariant inner product and $\Omega^{p}(X,\mathfrak{g}_{P})$ denote the smooth $p$-forms with va\-lues in~$\mathfrak{g}_{P}$. Given a~connection on~$P$, we denote by $\nabla_{A}$ the corresponding covariant derivative on $\Omega^{\ast}(X,\mathfrak{g}_{P})$ induced by $A$ and the Levi-Civita connection of~$X$. Let $\dd_{A}$ denote the exterior derivative associated to~$\nabla_{A}$.

For $u\in L^{p}(X,\mathfrak{g}_{P})$, where $1\leq p<\infty$ and $k$ is an integer, we denote
\begin{gather*}
\|u\|_{L^{p}_{k,A}(X)}:=\left(\sum_{j=0}^{k}\int_{X}|\nabla^{j}_{A}u|^{p}{\rm dvol}_{g}\right)^{1/p},
\end{gather*}
where $\nabla^{j}_{A}:=\nabla_{A}\circ\cdots\circ\nabla_{A}$ (repeated $j$ times for $j\geq0$). For $p=\infty$, we denote
\begin{gather*}
\|u\|_{L^{\infty}_{k,A}(X)}:=\sum_{j=0}^{k}\operatorname{ess}\sup_{X}\big|\nabla^{j}_{A}u\big|.
\end{gather*}
\subsection{Identities for the solutions}\label{section2.1}
In this section, we recall some basic identities that the solutions to complex Yang--Mills connections obey. A nice discussion of these identities can be found in~\cite{Gagliardo/Uhlenbeck:2012}.
\begin{Theorem}[Weitezenb\"{o}ck formula]
\begin{gather}\label{40}
\dd_{A}^{\ast}\dd_{A}+\dd_{A}\dd_{A}^{\ast}=\nabla^{\ast}_{A}\nabla_{A}+\operatorname{Ric}(\cdot)+\ast[\ast F_{A},\cdot]\qquad \text{on}\quad \Omega^{1}(X,\mathfrak{g}_{P}),
\end{gather}
where ${\rm Ric}$ is the Ricci tensor.
\end{Theorem}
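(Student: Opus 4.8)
The plan is to establish \eqref{40} as a Bochner--Weitzenb\"{o}ck identity by a local computation in a synchronous orthonormal frame. Fix $x\in X$ and choose a local orthonormal frame $\{e_{i}\}_{i=1}^{n}$ for $TX$ with dual coframe $\{e^{i}\}$ such that the Levi-Civita connection coefficients vanish at $x$; since the $\mathfrak{g}_{P}$-part of $\nabla_{A}$ acts trivially on form indices, we still have $\nabla_{A,i}e^{j}=0$ at $x$, where $\nabla_{A,i}:=\nabla_{e_{i}}^{A}$. For $\alpha\in\Omega^{1}(X,\mathfrak{g}_{P})$ the first step is to record the standard expressions
\begin{gather*}
\dd_{A}\alpha=\sum_{i}e^{i}\wedge\nabla_{A,i}\alpha,\qquad
\dd_{A}^{\ast}\alpha=-\sum_{i}\iota_{e_{i}}\nabla_{A,i}\alpha,\qquad
\nabla_{A}^{\ast}\nabla_{A}\alpha=-\sum_{i}\nabla_{A,i}\nabla_{A,i}\alpha ,
\end{gather*}
valid at $x$, which follow from the definitions of $\dd_{A}$, of the formal adjoints, and the choice of frame.

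Next I would compute $\dd_{A}^{\ast}\dd_{A}\alpha+\dd_{A}\dd_{A}^{\ast}\alpha$ at $x$ by substituting these formulas and commuting $\iota_{e_{j}}$ and $e^{i}\wedge$ past $\nabla_{A,i}$, using the anticommutation relation $\iota_{e_{j}}(e^{i}\wedge\beta)+e^{i}\wedge\iota_{e_{j}}\beta=\delta_{ij}\beta$. The terms with derivatives in "matched'' order assemble to $-\sum_{i}\nabla_{A,i}\nabla_{A,i}\alpha=\nabla_{A}^{\ast}\nabla_{A}\alpha$, while the remaining terms contain $\nabla_{A,i}\nabla_{A,j}-\nabla_{A,j}\nabla_{A,i}$, which I replace by the curvature $R^{\nabla_{A}}(e_{i},e_{j})$ of the connection induced on $\Lambda^{1}T^{\ast}X\otimes\mathfrak{g}_{P}$ by $A$ and the Levi-Civita connection. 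Thus the left-hand side of \eqref{40} equals $\nabla_{A}^{\ast}\nabla_{A}\alpha$ plus a zeroth-order curvature term $\mathcal{R}(\alpha):=\sum_{i,j}e^{j}\wedge\iota_{e_{i}}\big(R^{\nabla_{A}}(e_{i},e_{j})\alpha\big)$.

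It remains to identify $\mathcal{R}$. The induced connection splits, $R^{\nabla_{A}}=R^{\mathrm{LC}}\otimes\mathrm{id}+\mathrm{id}\otimes F_{A}$ on $\Lambda^{1}T^{\ast}X\otimes\mathfrak{g}_{P}$, so $\mathcal{R}=\mathcal{R}_{\mathrm{LC}}+\mathcal{R}_{F}$. The piece $\mathcal{R}_{\mathrm{LC}}$ is exactly the Weitzenb\"{o}ck curvature operator on ordinary $1$-forms, which by the first Bianchi identity and the symmetries of the Riemann tensor collapses to the Ricci endomorphism $\operatorname{Ric}(\cdot)$; this recovers the classical Bochner identity $\dd^{\ast}\dd+\dd\dd^{\ast}=\nabla^{\ast}\nabla+\operatorname{Ric}$ on $\Omega^{1}(X)$. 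The piece $\mathcal{R}_{F}$ is the algebraic operator $\alpha\mapsto\sum_{i,j}e^{j}\wedge\iota_{e_{i}}\big[F_{A}(e_{i},e_{j}),\alpha\big]$, and a direct check in the orthonormal frame shows this equals $\ast[\ast F_{A},\alpha]$. Combining the two pieces yields \eqref{40}.

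The only delicate point is this last identification together with the bookkeeping of signs and Hodge-star conventions in the term $\ast[\ast F_{A},\cdot]$; everything else is the standard Weitzenb\"{o}ck computation. Alternatively, one may simply invoke the general Bochner--Weitzenb\"{o}ck formula $\dd_{A}^{\ast}\dd_{A}+\dd_{A}\dd_{A}^{\ast}=\nabla_{A}^{\ast}\nabla_{A}+\mathcal{R}$ for the twisted de Rham complex and evaluate $\mathcal{R}$ in degree one, which reduces the whole argument to the two curvature identifications above.
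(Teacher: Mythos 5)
The paper does not actually prove this Weitzenb\"{o}ck formula: it is stated as a known identity and the reader is directed to Gagliardo--Uhlenbeck for a discussion (the only cited source for the identities in Section~2.1). So there is no ``paper's proof'' to compare against, and your proposal should be judged on its own.

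Your outline is the standard Bochner--Weitzenb\"{o}ck computation and the overall strategy is sound: pass to a synchronous orthonormal frame at a point, write $\dd_{A}$, $\dd_{A}^{\ast}$, and $\nabla_{A}^{\ast}\nabla_{A}$ in terms of $e^{i}\wedge$, $\iota_{e_{i}}$, and $\nabla_{A,i}$, use the Clifford-type relation $\iota_{e_{j}}(e^{i}\wedge\cdot)+e^{i}\wedge\iota_{e_{j}}=\delta_{ij}$ to isolate the rough Laplacian, and then handle the commutators $[\nabla_{A,i},\nabla_{A,j}]$ via the full curvature of the induced connection on $\Lambda^{1}T^{\ast}X\otimes\mathfrak{g}_{P}$, which splits into the Levi-Civita part and the bundle part $F_{A}$. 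The Levi-Civita part reduces to $\operatorname{Ric}$ on $1$-forms by the usual Bianchi-identity argument, so this piece is correct.

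The one place your argument remains genuinely incomplete is exactly the place you flag yourself: the identification of the algebraic operator $\alpha\mapsto\sum_{i,j}e^{j}\wedge\iota_{e_{i}}\bigl[F_{A}(e_{i},e_{j}),\alpha\bigr]$ with $\ast[\ast F_{A},\cdot]$. This is not just sign bookkeeping; it is the only non-classical content of the theorem as written, since the formula is stated in terms of $\ast[\ast F_{A},\cdot]$ rather than in the more common index form. To close the argument you should actually carry out the short frame computation: in dimension $n$, $\ast F_{A}$ is an $(n-2)$-form, $[\ast F_{A},\alpha]$ is an $(n-1)$-form, and applying $\ast$ again returns a $1$-form whose components are $\sum_{i}[F_{A}(e_{i},e_{k}),\alpha_{i}]$ up to an overall sign fixed by the orientation and Hodge-star conventions in force; matching this against the commutator remainder pins down the sign appearing in \eqref{40}. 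Without that check, the proof establishes a Weitzenb\"{o}ck formula with \emph{some} zeroth-order $F_{A}$-term, but not the specific right-hand side claimed. Everything else in your proposal is correct and is how one would prove this.
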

\begin{Proposition}[{\cite[Theorem 4.3]{Gagliardo/Uhlenbeck:2012}}]\label{F3}
If $\dd_{A}+\sqrt{-1}\phi$ is a solution of the complex Yang--Mills equations, then
\begin{gather}\label{Y1}
\nabla^{\ast}_{A}\nabla_{A}\phi+\operatorname{Ric}\circ\phi+\ast[\ast(\phi\wedge\phi),\phi]=0.
\end{gather}
\end{Proposition}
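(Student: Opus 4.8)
The plan is to apply the Weitzenb\"ock formula~(\ref{40}) directly to the extra field $\phi\in\Omega^{1}(X,\mathfrak{g}_{P})$ and then to exploit the two extra pieces of information carried by a solution of the complex Yang--Mills equations: the moment map condition $\dd_{A}^{\ast}\phi=0$ and the second Euler--Lagrange equation in~(\ref{E1.1}).

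First I would specialise~(\ref{40}) to $\phi$, obtaining
\[
\dd_{A}^{\ast}\dd_{A}\phi+\dd_{A}\dd_{A}^{\ast}\phi=\nabla_{A}^{\ast}\nabla_{A}\phi+\operatorname{Ric}\circ\phi+\ast[\ast F_{A},\phi].
\]
Since $\dd_{A}^{\ast}\phi=0$ by the moment map condition, the term $\dd_{A}\dd_{A}^{\ast}\phi$ vanishes, so $\dd_{A}^{\ast}\dd_{A}\phi=\nabla_{A}^{\ast}\nabla_{A}\phi+\operatorname{Ric}\circ\phi+\ast[\ast F_{A},\phi]$. Next I would eliminate the left-hand side using the second equation of~(\ref{E1.1}), namely $\dd_{A}^{\ast}\dd_{A}\phi=(-1)^{n}\ast[\phi,\ast(F_{A}-\phi\wedge\phi)]$, which gives
\[
(-1)^{n}\ast[\phi,\ast(F_{A}-\phi\wedge\phi)]=\nabla_{A}^{\ast}\nabla_{A}\phi+\operatorname{Ric}\circ\phi+\ast[\ast F_{A},\phi].
\]
It then remains only to rearrange the zeroth-order terms. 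Using the graded-commutator symmetry $[\alpha\wedge\beta]=(-1)^{pq+1}[\beta\wedge\alpha]$ for a $p$-form $\alpha$ and a $q$-form $\beta$, together with the Hodge-star and codifferential conventions, one rewrites $\ast[\ast F_{A},\phi]$ and $\ast[\phi,\ast F_{A}]$ in terms of a common expression; the two terms built from $F_{A}$ then cancel, while the contribution of $-\phi\wedge\phi$ is converted into $\ast[\ast(\phi\wedge\phi),\phi]$. Moving all terms to one side yields the asserted identity $\nabla_{A}^{\ast}\nabla_{A}\phi+\operatorname{Ric}\circ\phi+\ast[\ast(\phi\wedge\phi),\phi]=0$.

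The applications of~(\ref{40}) and of the two constraints are routine; the only delicate step is the final bookkeeping. One must keep the powers of $(-1)$, the Hodge-star conventions, and the graded-commutator signs consistent with those of~\cite{Donaldson/Kronheimer, Gagliardo/Uhlenbeck:2012}, so that the curvature $F_{A}$ genuinely drops out of the identity and the surviving bracket term carries the sign shown. Equivalently, the whole computation can be organised as the imaginary part of $\dd^{\ast}_{\mathcal{A}}\mathcal{F}_{\mathcal{A}}=0$ together with a Bochner formula, which is the viewpoint taken in~\cite{Gagliardo/Uhlenbeck:2012}.
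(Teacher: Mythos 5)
The paper does not prove this proposition at all: it simply cites \cite[Theorem~4.3]{Gagliardo/Uhlenbeck:2012}, so you are reconstructing an argument rather than comparing against one. Your outline is the natural route and almost certainly the one taken in the cited source: specialise the Weitzenb\"ock formula~(\ref{40}) to $\phi$, kill $\dd_A\dd_A^\ast\phi$ via $\dd_A^\ast\phi=0$, substitute the second equation of~(\ref{E1.1}) for $\dd_A^\ast\dd_A\phi$, and then observe that $F_A$ must drop out because it is absent from~(\ref{Y1}).

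The step you wave at as ``routine bookkeeping'' is, however, exactly where a naive execution fails against the formulas as printed in this paper. With $\phi\in\Omega^{1}$, $\ast F_A\in\Omega^{n-2}$, and the graded symmetry you quote, $[\alpha\wedge\beta]=(-1)^{pq+1}[\beta\wedge\alpha]$, one finds $(-1)^{n}\ast[\phi,\ast F_A]=(-1)^{n}(-1)^{n-1}\ast[\ast F_A,\phi]=-\ast[\ast F_A,\phi]$. Feeding the second equation of~(\ref{E1.1}) \emph{as printed}, namely $\dd_A^\ast\dd_A\phi=(-1)^{n}\ast[\phi,\ast(F_A-\phi\wedge\phi)]$, into the identity $\dd_A^\ast\dd_A\phi=\nabla_A^\ast\nabla_A\phi+\operatorname{Ric}\circ\phi+\ast[\ast F_A,\phi]$ then produces $\nabla_A^\ast\nabla_A\phi+\operatorname{Ric}\circ\phi+2\ast[\ast F_A,\phi]-\ast[\ast(\phi\wedge\phi),\phi]=0$: the curvature terms \emph{double} rather than cancel, and the $\phi\wedge\phi$ term acquires the wrong sign. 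The cancellation you assert does occur, but only if the bracket term in the second Euler--Lagrange equation carries a $+(-1)^{n}$ rather than the $-(-1)^{n}$ printed in~(\ref{E1.1}); with that sign (which is also what a direct variation of $\int|F_A-\phi\wedge\phi|^2+|\dd_A\phi|^2$ and consistency with~(\ref{E2.3}) force) your computation delivers~(\ref{Y1}) exactly. So the plan is right, but you should carry out the adjoint $[\phi,\cdot]^{\ast}$ computation explicitly and not take~(\ref{E1.1}) at face value; otherwise the ``only delicate step'' will silently give the wrong answer.
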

By integrating \eqref{Y1} over $X$, we have an identity
\begin{gather}\label{E2.3}
\|\nabla_{A}\phi\|^{2}_{L^{2}(X)}+\langle \operatorname{Ric}\circ\phi,\phi\rangle_{L^{2}(X)}+2\|\phi\wedge\phi\|^{2}_{L^{2}(X)}=0.
\end{gather}
Then the results by Gagliardo--Uhlenbeck give a following consequence result for the extra f\/ields.
\begin{Corollary}[{\cite[Corollary 4.5]{Gagliardo/Uhlenbeck:2012}}]\label{C2.3}
If $X$ is a compact manifold with a positive Ricci curvature, then solutions of the complex Yang--Mills equations reduce to solutions of the pure Yang--Mills equation with $\phi=0$.
\end{Corollary}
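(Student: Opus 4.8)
The plan is to read the conclusion directly off the integrated Weitzenb\"ock-type identity~\eqref{E2.3}. First I would note that each of the three summands on the left-hand side of~\eqref{E2.3} is non-negative: the terms $\|\nabla_A\phi\|_{L^2(X)}^2$ and $2\|\phi\wedge\phi\|_{L^2(X)}^2$ are manifestly so, while the hypothesis that the Ricci tensor is positive makes the pointwise quadratic form $\langle\operatorname{Ric}\circ\phi,\phi\rangle$ non-negative, hence its integral $\langle\operatorname{Ric}\circ\phi,\phi\rangle_{L^2(X)}$ is non-negative as well.

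Next, since $X$ is compact and $\operatorname{Ric}>0$, there is a constant $c>0$ with $\operatorname{Ric}\geq c\,g$ as symmetric bilinear forms on $TX$; consequently $\langle\operatorname{Ric}\circ\phi,\phi\rangle_{L^2(X)}\geq c\|\phi\|_{L^2(X)}^2$. Substituting this into~\eqref{E2.3} and discarding the other two non-negative terms yields $c\|\phi\|_{L^2(X)}^2\leq 0$, so $\phi\equiv 0$ on $X$.

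Finally, setting $\phi=0$ in the complex Yang--Mills system~\eqref{E1.1} together with the moment-map condition $\dd_A^\ast\phi=0$, the second equation and the moment-map condition hold trivially, and the first equation collapses to $\dd_A^\ast F_A=0$; thus $(A,0)$ is precisely a solution of the pure Yang--Mills equation. There is essentially no obstacle here once~\eqref{E2.3} is available; the only point worth recording is that compactness of $X$ is exactly what promotes the pointwise positivity of $\operatorname{Ric}$ to the uniform spectral bound $\operatorname{Ric}\geq c\,g$ used above.
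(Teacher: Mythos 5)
Your proof is correct and is essentially the argument the paper has in mind: the corollary is stated as a consequence of the integrated Weitzenb\"ock identity~\eqref{E2.3}, which is precisely what you exploit, with the positivity and compactness assumptions forcing every summand to vanish and hence $\phi\equiv 0$, after which the system~\eqref{E1.1} collapses to $\dd_A^\ast F_A=0$. The only cosmetic remark is that you do not even need the uniform bound $\operatorname{Ric}\geq c\,g$: pointwise non-negativity of $\langle\operatorname{Ric}\circ\phi,\phi\rangle$ already forces all three terms in~\eqref{E2.3} to vanish, and strict positivity of $\operatorname{Ric}$ at each point then gives $\phi=0$ pointwise; but your version is equally valid and arguably cleaner.
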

\begin{Proposition}[energy identity]
If $\dd_{A}+\sqrt{-1}\phi$ is a solution of the complex Yang--Mills equations, then
\begin{gather*}{\rm YM}_{\mathbb{C}}(A,\phi)=\|F_{A}\|^{2}_{L^{2}(X)}-\|\phi\wedge\phi\|^{2}_{L^{2}(X)}.\end{gather*}
\end{Proposition}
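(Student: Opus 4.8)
The plan is to expand the $L^{2}$-norm of the complex curvature about its pure Yang--Mills part and reduce everything to the single pairing $\langle F_{A},\phi\wedge\phi\rangle_{L^{2}(X)}$. Starting from ${\rm YM}_{\mathbb{C}}(A,\phi)=\|F_{A}-\phi\wedge\phi\|^{2}_{L^{2}(X)}+\|\dd_{A}\phi\|^{2}_{L^{2}(X)}$ and expanding the first square one gets
\begin{gather*}
{\rm YM}_{\mathbb{C}}(A,\phi)=\|F_{A}\|^{2}_{L^{2}(X)}-2\langle F_{A},\phi\wedge\phi\rangle_{L^{2}(X)}+\|\phi\wedge\phi\|^{2}_{L^{2}(X)}+\|\dd_{A}\phi\|^{2}_{L^{2}(X)},
\end{gather*}
so the proposition becomes equivalent to the identity $\|\dd_{A}\phi\|^{2}_{L^{2}(X)}+2\|\phi\wedge\phi\|^{2}_{L^{2}(X)}=2\langle F_{A},\phi\wedge\phi\rangle_{L^{2}(X)}$, and this is what I would establish.

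For the reduced identity, first apply the Weitzenb\"ock formula \eqref{40} to $\phi\in\Omega^{1}(X,\mathfrak{g}_{P})$, take the $L^{2}(X)$-pairing with $\phi$, and use the moment map condition $\dd^{\ast}_{A}\phi=0$ (which a solution satisfies) to discard $\langle\dd_{A}\dd^{\ast}_{A}\phi,\phi\rangle=\|\dd^{\ast}_{A}\phi\|^{2}_{L^{2}(X)}$; this gives
\begin{gather*}
\|\dd_{A}\phi\|^{2}_{L^{2}(X)}=\|\nabla_{A}\phi\|^{2}_{L^{2}(X)}+\langle\operatorname{Ric}\circ\phi,\phi\rangle_{L^{2}(X)}+\big\langle\ast[\ast F_{A},\phi],\phi\big\rangle_{L^{2}(X)}.
\end{gather*}
Into this I substitute the Gagliardo--Uhlenbeck identity \eqref{E2.3}, namely $\|\nabla_{A}\phi\|^{2}_{L^{2}(X)}+\langle\operatorname{Ric}\circ\phi,\phi\rangle_{L^{2}(X)}=-2\|\phi\wedge\phi\|^{2}_{L^{2}(X)}$, to obtain $\|\dd_{A}\phi\|^{2}_{L^{2}(X)}+2\|\phi\wedge\phi\|^{2}_{L^{2}(X)}=\langle\ast[\ast F_{A},\phi],\phi\rangle_{L^{2}(X)}$. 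Thus everything comes down to the pointwise curvature identity $\langle\ast[\ast F_{A},\phi],\phi\rangle=2\langle F_{A},\phi\wedge\phi\rangle$, after which one integrates over $X$ and combines the displays.

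To prove this last identity, fix a point, pick a local orthonormal coframe $\{e^{i}\}$, and write $F_{A}=\sum_{k<l}F_{kl}\,e^{k}\wedge e^{l}$, $\phi=\sum_{i}\phi_{i}\,e^{i}$. Unwinding the Hodge stars in $\ast[\ast F_{A},\,\cdot\,]$ produces $\langle\ast[\ast F_{A},\phi],\phi\rangle=c_{n}\sum_{i,j}\langle[F_{ij},\phi_{i}],\phi_{j}\rangle$ for a sign $c_{n}\in\{\pm1\}$ depending only on $n$; then the $\operatorname{ad}$-invariance of the fibre metric together with the antisymmetry $F_{ij}=-F_{ji}$ turns the double sum into $2\sum_{i<j}\langle F_{ij},[\phi_{i},\phi_{j}]\rangle=2\langle F_{A},\phi\wedge\phi\rangle$, and the sign is $c_{n}=+1$ because the same manipulation performed with $F_{A}$ replaced by $\phi\wedge\phi$ must reproduce $\langle\ast[\ast(\phi\wedge\phi),\phi],\phi\rangle=2\|\phi\wedge\phi\|^{2}$, which is precisely the zeroth-order term through which \eqref{E2.3} follows from \eqref{Y1}. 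Substituting back yields ${\rm YM}_{\mathbb{C}}(A,\phi)=\|F_{A}\|^{2}_{L^{2}(X)}-\|\phi\wedge\phi\|^{2}_{L^{2}(X)}$. The only genuinely delicate point is exactly this pointwise identification of the curvature term: one must track carefully the factors of $(-1)$ coming from the Hodge star (which depend on $n$), the distinction between $[\,\cdot\wedge\cdot\,]$ and $\cdot\wedge\cdot$, and the two re-indexings of the double sum, so that they conspire to the clean constant $2$ with a plus sign.
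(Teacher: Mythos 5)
Your proof is correct and follows essentially the same route as the paper's: expand $\|F_A-\phi\wedge\phi\|^2$, invoke the Weitzenb\"ock formula~\eqref{40} together with the moment map condition, and close with the Gagliardo--Uhlenbeck identity~\eqref{E2.3}. The only difference is one of presentation: you isolate and justify the pointwise identity $\langle\ast[\ast F_A,\phi],\phi\rangle=2\langle F_A,\phi\wedge\phi\rangle$ (pinning the sign by consistency with the derivation of~\eqref{E2.3} from~\eqref{Y1}), whereas the paper absorbs that same identity silently in passing from the line containing $-2\langle F_A,\phi\wedge\phi\rangle+|\dd_A\phi|^2+|\dd_A^*\phi|^2$ to the line containing $|\nabla_A\phi|^2+\langle\operatorname{Ric}\circ\phi,\phi\rangle$.
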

\begin{proof}
By using the moment condition $\dd_{A}^{\ast}\phi=0$, the complex Yang--Mills functional is written as
\begin{gather*}
{\rm YM}_{{\mathbb C}}(A,\phi)=\int_{X}\big(|F_{A}-\phi\wedge\phi|^{2}+|\dd_{A}\phi|^{2}+|\dd^{\ast}_{A}\phi|\big)\\
\hphantom{{\rm YM}_{{\mathbb C}}(A,\phi)}{} =\int_{X}|F_{A}|^{2}+|\phi\wedge\phi|^{2}-2\langle F_{A},\phi\wedge\phi\rangle+|\dd_{A}\phi|^{2}+|\dd_{A}^{\ast}\phi|\\
\hphantom{{\rm YM}_{{\mathbb C}}(A,\phi)}{}=\int_{X}|F_{A}|^{2}+|\phi\wedge\phi|^{2}+|\nabla_{A}\phi|^{2}+\langle \operatorname{Ric}\circ\phi,\phi\rangle\\
\hphantom{{\rm YM}_{{\mathbb C}}(A,\phi)}{} =\|F_{A}\|^{2}_{L^{2}(X)}-\|\phi\wedge\phi\|^{2}_{L^{2}(X)}.
\end{gather*}
For the last identity, we use the equation~(\ref{E2.3}).
\end{proof}

As an application of the maximum principle, Gagliardo--Uhlenbeck obtain a priori $L^{\infty}$-estimate for the extra f\/ields.
\begin{Theorem}[{\cite[Corollary~4.6]{Gagliardo/Uhlenbeck:2012}}]\label{F6}
Let $G$ be a compact Lie group, $P$ be a $G$-bundle over a~closed, smooth manifold $X$ of dimension $n\geq2$ and endowed with a smooth Riemannian metric~$g$. Then there is a positive constant $C=C(g,n)$ with the following significance. If~$(A,\phi)$ is a smooth solution of complex Yang--Mills equation, then
\begin{gather*}
\|\phi\|_{L^{\infty}(X)}\leq C\|\phi\|_{L^{2}(X)}.
\end{gather*}
\end{Theorem}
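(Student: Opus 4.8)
The plan is to derive a pointwise elliptic differential inequality for $|\phi|^{2}$ from the refined Bochner identity of Proposition~\ref{F3}, and then invoke the classical local boundedness estimate for subsolutions on the compact manifold $X$; no smallness of $F_{A}$ or $\phi$ is needed, so the estimate is a genuine a priori bound.

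First I would take the pointwise inner product of \eqref{Y1} with $\phi$ and combine it with the Bochner--Leibniz identity $\Delta|\phi|^{2}=2\langle\nabla_{A}^{\ast}\nabla_{A}\phi,\phi\rangle-2|\nabla_{A}\phi|^{2}$, where $\Delta=\nabla^{\ast}\nabla$ denotes the non-negative scalar Laplacian of $(X,g)$. This gives
\[
\tfrac12\Delta|\phi|^{2}+|\nabla_{A}\phi|^{2}+\langle\operatorname{Ric}\circ\phi,\phi\rangle+\big\langle\ast[\ast(\phi\wedge\phi),\phi],\phi\big\rangle=0 .
\]
The essential observation is that the cubic self-interaction term is non-negative: a direct algebraic computation using the ad-invariance of the fibre inner product on $\mathfrak{g}_{P}$ (consistent with the integrated identity \eqref{E2.3}) shows $\big\langle\ast[\ast(\phi\wedge\phi),\phi],\phi\big\rangle=2|\phi\wedge\phi|^{2}\ge0$. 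Discarding the non-negative terms $|\nabla_{A}\phi|^{2}$ and $2|\phi\wedge\phi|^{2}$ and estimating $-\langle\operatorname{Ric}\circ\phi,\phi\rangle\le\|\operatorname{Ric}\|_{L^{\infty}(X)}|\phi|^{2}$ — finite since $(X,g)$ is compact — I obtain
\[
\Delta|\phi|^{2}\le c(g)\,|\phi|^{2}\qquad\text{on }X,\qquad c(g):=2\|\operatorname{Ric}\|_{L^{\infty}(X)} .
\]

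This exhibits $f:=|\phi|^{2}\ge0$ as a (smooth) subsolution of the uniformly elliptic operator $\Delta-c(g)$ on the closed manifold $X$. The conclusion then follows from the standard local $L^{\infty}$-bound for subsolutions (De~Giorgi--Nash--Moser iteration, which uses only the Sobolev inequality for $(X,g)$): covering $X$ by finitely many geodesic balls yields $\|f\|_{L^{\infty}(X)}\le C\|f\|_{L^{1}(X)}$ with $C=C(g,n)$, i.e.\ $\||\phi|^{2}\|_{L^{\infty}(X)}\le C\||\phi|^{2}\|_{L^{1}(X)}=C\|\phi\|_{L^{2}(X)}^{2}$; taking square roots gives $\|\phi\|_{L^{\infty}(X)}\le C^{1/2}\|\phi\|_{L^{2}(X)}$.

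I do not expect a genuine obstacle: all the content lies in the favourable sign of the cubic term, which is precisely what makes $C$ independent of both $A$ and $\phi$. The only points requiring a little care are the verification of the algebraic identity $\langle\ast[\ast(\phi\wedge\phi),\phi],\phi\rangle=2|\phi\wedge\phi|^{2}$ and the citation of a version of the Moser estimate adapted to a compact manifold; both are routine. (When $\operatorname{Ric}>0$ one can even bypass the iteration: the inequality becomes $\Delta|\phi|^{2}\le0$, so $|\phi|^{2}$ is subharmonic on a closed manifold and hence constant, after which \eqref{E2.3} forces $\phi\equiv0$ — this recovers Corollary~\ref{C2.3}.)
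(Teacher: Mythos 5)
Your proposal is correct, and it implements exactly what the paper indicates: the paper offers no proof of Theorem~\ref{F6}, citing Gagliardo--Uhlenbeck (Corollary~4.6) and describing their argument as ``an application of the maximum principle.'' Your route --- take the pointwise inner product of the Bochner identity \eqref{Y1} with $\phi$, combine with the Bochner--Leibniz formula to exhibit $|\phi|^2$ as a subsolution of $\Delta - c(g)$ (the cubic term being nonnegative, as already encoded in the integrated identity \eqref{E2.3}), and then invoke the Moser local boundedness estimate on a finite cover of $X$ --- is the standard realization of that maximum-principle argument, so the approach matches the cited one in essence.
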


\subsection{Energy gap for Yang--Mills connections}\label{section2.2}
In this section, f\/irst we recall an energy gap result of Yang--Mills equation.
\begin{Theorem}[{\cite[Theorem~1.1]{Feehan2015}}]\label{T2.4}
Let $X$ be a closed, oriented, smooth Riemannian manifold of dimension $n\geq2$ with smooth Riemannan metric~$g$, $P$ be a $G$-bundle over $X$, let
{$2p\geq n$ when $n\geq3$ or $p\geq2$ when $n=2$}. Then any Yang--Mills connection~$A$ over~$X$ with compact Lie group~$G$ is either satisfies
\begin{gather*}{\int_{X}|F_{A}|^{p}{\rm dvol}_{g}\geq C_{0}}\end{gather*}
for a constant $C_{0}>0$ depending only on $X$, $n$, $p$, $G$ or the connection~$A$ is flat.
\end{Theorem}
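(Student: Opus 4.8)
The plan is to argue by contradiction, combining Uhlenbeck's compactness theorem with a rescaling (blow-up) of the curvature. Suppose the conclusion fails; then there is a sequence $A_i$ of Yang--Mills connections on $P$ with $\|F_{A_i}\|_{L^p(X)}\to0$ and $F_{A_i}\not\equiv0$ for every $i$. The first ingredient is an a priori pointwise bound on the curvature of a Yang--Mills connection. Since $\dd_{A_i}F_{A_i}=0$ (Bianchi) and $\dd_{A_i}^{\ast}F_{A_i}=0$ (Yang--Mills), the $2$-form $F_{A_i}$ is $\dd_{A_i}$-harmonic, so the Weitzenb\"ock formula on $\Omega^2(X,\mathfrak{g}_P)$ reads $\nabla_{A_i}^{\ast}\nabla_{A_i}F_{A_i}+\mathcal{R}_g(F_{A_i})+\{F_{A_i},F_{A_i}\}=0$, where $\mathcal{R}_g$ is a zeroth-order operator built from the Riemann curvature of $g$ and $\{F_{A_i},F_{A_i}\}$ is pointwise quadratic in $F_{A_i}$; combined with Kato's inequality this yields the weak differential inequality $\Delta|F_{A_i}|\le c\,(1+|F_{A_i}|)\,|F_{A_i}|$ with $c=c(g,G)$. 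Because $2p\ge n$ for $n\ge3$ and $p\ge2$ for $n=2$, the potential $c(1+|F_{A_i}|)$ has $L^{n/2}(X)$-norm (resp.\ $L^{1+\delta}(X)$-norm when $n=2$) bounded uniformly in $i$ as soon as $\|F_{A_i}\|_{L^p(X)}\le1$, so Moser iteration over a fixed finite cover of $X$ gives $\|F_{A_i}\|_{L^{\infty}(X)}\le C\|F_{A_i}\|_{L^2(X)}$ with $C$ independent of $i$; in particular $\|F_{A_i}\|_{L^{\infty}(X)}\to0$ and $\|F_{A_i}\|_{L^2(X)}\le C'\|F_{A_i}\|_{L^p(X)}\to0$. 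Feeding the $L^{\infty}$-bound into the integrated Weitzenb\"ock identity controls the cubic term $\langle\{F_{A_i},F_{A_i}\},F_{A_i}\rangle$ by $C\|F_{A_i}\|_{L^{\infty}(X)}\|F_{A_i}\|_{L^2(X)}^2$, and hence $\|\nabla_{A_i}F_{A_i}\|_{L^2(X)}^2\le C\|F_{A_i}\|_{L^2(X)}^2$ for all large $i$.

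Since $\|F_{A_i}\|_{L^{n/2}(X)}\to0$ there is no curvature concentration, so by Uhlenbeck's weak compactness theorem (see also \cite{Feehan2015}) there are gauge transformations $u_i$ and a subsequence with $u_i(A_i)\to\Gamma$ in $C^{\infty}(X)$ for a connection $\Gamma$, which is flat because $\|F_{\Gamma}\|_{L^{n/2}}\le\liminf\|F_{A_i}\|_{L^{n/2}}=0$. Replacing $A_i$ by $u_i(A_i)$ and applying a further gauge transformation near the identity, I may assume $a_i:=A_i-\Gamma$ satisfies $\dd_{\Gamma}^{\ast}a_i=0$; then $a_i\to0$ in $C^{\infty}(X)$ and $F_{A_i}=\dd_{\Gamma}a_i+\tfrac12[a_i\wedge a_i]$. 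Put $\mu_i:=\|F_{A_i}\|_{L^2(X)}>0$ and $g_i:=\mu_i^{-1}F_{A_i}$, so $\|g_i\|_{L^2(X)}=1$, while the gradient estimate gives $\|\nabla_{A_i}g_i\|_{L^2(X)}\le\sqrt{C}$. Thus $(g_i)$ is bounded in $W^{1,2}(X)$ (the $A_i$-covariant and ordinary Sobolev norms being comparable since $A_i\to\Gamma$ smoothly), so along a subsequence $g_i\rightharpoonup g_{\infty}$ in $W^{1,2}(X)$ and $g_i\to g_{\infty}$ strongly in $L^2(X)$, whence $\|g_{\infty}\|_{L^2(X)}=1$. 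Passing to the limit in $\dd_{A_i}g_i=0$ and $\dd_{A_i}^{\ast}g_i=0$ shows $g_{\infty}$ is $\dd_{\Gamma}$-harmonic, i.e.\ $g_{\infty}\in\ker\Delta_{\Gamma}|_{\Omega^2(X,\mathfrak{g}_P)}$.

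To conclude I would show that $g_{\infty}$ is in addition $\dd_{\Gamma}$-exact: then the Hodge decomposition for the flat connection $\Gamma$ gives $\|g_{\infty}\|_{L^2(X)}^2=\langle g_{\infty},\dd_{\Gamma}b_{\infty}\rangle_{L^2(X)}=\langle\dd_{\Gamma}^{\ast}g_{\infty},b_{\infty}\rangle_{L^2(X)}=0$, contradicting $\|g_{\infty}\|_{L^2(X)}=1$ and finishing the proof. Writing $g_i=\dd_{\Gamma}(\mu_i^{-1}a_i)+\tfrac12\mu_i^{-1}[a_i\wedge a_i]$, the second term has $L^2(X)$-norm at most $C\|a_i\|_{L^{\infty}(X)}\,\mu_i^{-1}\|a_i\|_{W^{1,2}(X)}$; if $\mu_i^{-1}\|a_i\|_{W^{1,2}(X)}$ stays bounded this tends to $0$ (since $\|a_i\|_{L^{\infty}(X)}\to0$) and moreover $\mu_i^{-1}a_i\rightharpoonup b_{\infty}$ in $W^{1,2}(X)$ along a subsequence, so $g_{\infty}=\dd_{\Gamma}b_{\infty}$ is exact. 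Everything therefore reduces to the elliptic estimate $\|a_i\|_{W^{1,2}(X)}\le C\|F_{A_i}\|_{L^2(X)}$.

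This last estimate is the main obstacle, and the only step that genuinely uses the hypotheses on $X$, $g$, $G$, $P$. When $\Gamma$ is degenerate, i.e.\ $\ker\Delta_{\Gamma}|_{\Omega^1(X,\mathfrak{g}_P)}\ne\{0\}$, the harmonic component $h_i$ of $a_i$ is invisible to $\dd_{\Gamma}a_i$ and is a priori uncontrolled by $\|F_{A_i}\|_{L^2(X)}$, so one has to exclude ``obstructed'' low-energy Yang--Mills connections sitting near $\Gamma$. I would do this either via the Lojasiewicz--Simon gradient inequality at $\Gamma$ for the Yang--Mills functional, as in Feehan~\cite{Feehan2015} (since $\dd_A^{\ast}F_A=0$, this forces $\|F_A\|_{L^2(X)}=0$ for $A$ near $\Gamma$), or via a finite-dimensional Kuranishi-type argument: projecting the Yang--Mills equation onto $\ker\Delta_{\Gamma}|_{\Omega^1(X,\mathfrak{g}_P)}$ shows that $h_i$ is, to leading order, a critical point of the nonnegative homogeneous quartic $h\mapsto\tfrac14\|[h\wedge h]\|_{L^2(X)}^2$ on that finite-dimensional space, and Euler's identity forces any critical point of such a quartic to be a zero of it; iterating this, $[h_i\wedge h_i]$ and then $h_i$ become negligible relative to $\mu_i$, and the argument of the preceding paragraph applies.
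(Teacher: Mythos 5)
Your strategy (contradiction, Uhlenbeck compactness, rescaling $g_i := \mu_i^{-1}F_{A_i}$, and showing the weak limit is both $\dd_\Gamma$-harmonic and $\dd_\Gamma$-exact) is genuinely different from the proof given in the paper, which is direct rather than by contradiction and never invokes compactness. After reducing to $\|F_A\|_{L^q}$ small for a suitable $q$ with $2q>n$ via the a priori $L^\infty$ bound of Theorem~\ref{T3}, the paper applies Theorem~\ref{T2} (Uhlenbeck/Feehan) to produce a flat connection $\Gamma$, a gauge transformation $u$ with $\dd^\ast_\Gamma a=0$ for $a:=u^\ast(A)-\Gamma$, and crucially the quantitative bound $\|a\|_{L^q_{1,\Gamma}}\leq C\|F_A\|_{L^q}$. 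Then, pairing the Yang--Mills equation $\dd^\ast_{\tilde A}F_{\tilde A}=0$ with $a$ and using $\dd_{\tilde A}a = F_{\tilde A}+a\wedge a$ gives the clean identity $\|F_A\|^2_{L^2}=-(F_{\tilde A},a\wedge a)_{L^2}$, which together with Sobolev embedding and the Coulomb-gauge bound yields $\|F_A\|^2_{L^2}\leq C\|F_A\|^3_{L^2}$ and hence $F_A\equiv 0$ once the curvature is small.

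The concrete gap in your proposal is exactly where you flag it: the estimate $\|a_i\|_{W^{1,2}(X)}\leq C\|F_{A_i}\|_{L^2(X)}$. As you observe, this is false as a linear elliptic estimate whenever $\ker\Delta_\Gamma|_{\Omega^1(X,\mathfrak{g}_P)}\neq\{0\}$, since the harmonic component of $a_i$ is invisible to $\dd_\Gamma a_i$; but Theorem~\ref{T2.4} as stated carries no non-degeneracy hypothesis (that appears only in Theorem~\ref{T1.1}), so this case cannot be excluded. Your two proposed repairs are not carried out: appealing to the Lojasiewicz--Simon inequality simply reproduces Feehan's original proof wholesale, which is precisely what the paper sets out to avoid; and the Kuranishi sketch is incomplete --- you assert that the projection of the Yang--Mills equation onto $\ker\Delta_\Gamma|_{\Omega^1}$ makes $h_i$ a critical point of $h\mapsto\tfrac14\|[h\wedge h]\|^2_{L^2}$ ``to leading order,'' but establishing that requires first solving the equation modulo the kernel and controlling the error terms, which is the entire content of the argument you are trying to replace. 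The bound you need is exactly part~(2) of Theorem~\ref{T2}: that bound is obtained by a nonlinear Coulomb-gauge construction in which the flat connection $\Gamma$ is chosen adaptively, not by a linear estimate at a fixed $\Gamma$, which is why it does not suffer from the kernel obstruction you ran into. Once that theorem is in hand, however, the compactness/blow-up machinery becomes superfluous, and the direct computation in the paper is both shorter and self-contained.
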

In \cite{Huang2017}, the author proves the energy gap theorem of Yang--Mills connection without using the Lojasiewicz--Simon gradient inequality. Here, we give a~proof in detail for the readers' convenience. We review a key result due to Uhlenbeck for the connections with $L^{p}$-small curvature ($2p>n$)~\cite{Uhlenbeck1985} which provides existence of a f\/lat connection~$\Gamma$ on~$P$, a global gauge transformation $u$ of $A$ to Coulomb gauge with respect to~$\Gamma$ and a Sobolev norm estimate for the distance between~$\Gamma$ and~$A$.
\begin{Theorem}[{\cite[Corollary 4.3]{Uhlenbeck1985} and \cite[Theorem 5.1]{Feehan2015}}]\label{T2}
Let $X$ be a closed, smooth manifold of dimension $n\geq2$ and endowed with a Riemannian metric~$g$, and $G$ be a compact Lie group, and $2p>n$. Then there are constants, $\varepsilon=\varepsilon(n,g,G,p)\in(0,1]$ and $C=C(n,g,G,p)\in[1,\infty)$, with the following significance. Let $A$ be a $L^{p}_{1}$ connection on a principal $G$-bundle~$P$ over~$X$. If the curvature~$F_{A}$ obeys
\begin{gather*}\|F_{A}\|_{L^{p}(X)}\leq\varepsilon,\end{gather*}
then there exists a flat connection, $\Gamma$, on $P$ and a gauge transformation $u\in L^{p}_{2}(X)$ such that
\begin{enumerate}\itemsep=0pt
\item[$(1)$] $\dd^{\ast}_{\Gamma} ({u}^{\ast}(A)-\Gamma)=0$ on $X$,
\item[$(2)$] $\|u^{\ast}(A)-\Gamma\|_{L^{p}_{1,\Gamma}}\leq C\|F_{A}\|_{L^{p}(X)}$, and
\item[$(3)$] $\|u^{\ast}(A)-\Gamma\|_{L^{\frac{n}{2}}_{1,\Gamma}}\leq C\|F_{A}\|_{L^{\frac{n}{2}}(X)}$.
\end{enumerate}
\end{Theorem}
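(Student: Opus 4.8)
\emph{Proof proposal.} Parts~(1) and~(2) are Uhlenbeck's Coulomb comparison theorem \cite[Corollary~4.3]{Uhlenbeck1985}, and part~(3) is its refinement to the conformally invariant norm \cite[Theorem~5.1]{Feehan2015}; I would recover all three by the classical scheme of \emph{local Coulomb gauges}, \emph{patching to a flat comparison connection}, and a \emph{global Coulomb correction}, being careful to carry Uhlenbeck's scale-invariant \emph{local} estimates through the patching so that both the $L^p_1$ and the $L^{n/2}_1$ bounds survive. \emph{Step 1 (local gauges).} Cover $X$ by finitely many small geodesic balls $B_i$, each uniformly bi-Lipschitz to a Euclidean ball, with bounded multiplicity; since $2p>n$, after shrinking $\varepsilon$ each $\|F_A\|_{L^p(B_i)}\le\varepsilon$ lies below the threshold of Uhlenbeck's local gauge-fixing theorem, which yields a trivialization of $P|_{B_i}$ in which the connection form $A_i$ obeys $\dd^{\ast}A_i=0$ on $B_i$, $\iota_\nu A_i=0$ on $\partial B_i$, and the conformally invariant bound $\|A_i\|_{L^q_1(B_i)}\le C\|F_A\|_{L^q(B_i)}$ for every $q$ in a range containing both $q=p$ and $q=n/2$. \emph{Step 2 (patching).} On an overlap $B_i\cap B_j$ the transition function satisfies $\dd g_{ij}=A_ig_{ij}-g_{ij}A_j$, so a bootstrap using this equation gives $\|g_{ij}-c_{ij}\|_{L^p_2}\le C\varepsilon$ for a suitable constant $c_{ij}\in G$, hence $\|g_{ij}-c_{ij}\|_{C^0}\le C\varepsilon$ (as $L^p_2\hookrightarrow C^0$ for $2p>n$); the near-cocycle $\{c_{ij}\}$ is corrected, for $\varepsilon$ small, to a genuine cocycle defining a flat bundle $P_0$ with flat connection $\Gamma_0$, and $P_0\cong P$ because its transition data is $C^0$-close to that of $P$. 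Transporting $\Gamma_0$ along such an isomorphism produces a flat connection $\Gamma$ on $P$, and the near-flatness of $A$ in these gauges provides a gauge transformation $v$ with $a:=v^{\ast}(A)-\Gamma$ satisfying $\|a\|_{L^q_{1,\Gamma}}\le C\|F_A\|_{L^q(X)}$ for $q\in\{p,\,n/2\}$.

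\emph{Step 3 (Coulomb correction; parts (1) and (2)).} The form $a$ need not be $\dd^{\ast}_\Gamma$-closed, so I seek $w=\exp(\xi)$ with $\xi\in L^p_2(X,\mathfrak g_P)$ small such that $\tilde a:=(vw)^{\ast}(A)-\Gamma=w^{-1}aw+w^{-1}\dd_\Gamma w$ obeys $\dd^{\ast}_\Gamma\tilde a=0$. This equation linearizes at $\xi=0$ to $\Delta^0_\Gamma\xi=-\dd^{\ast}_\Gamma a+Q(\xi,a)$, where $\Delta^0_\Gamma=\dd^{\ast}_\Gamma\dd_\Gamma$ on $\Omega^0(X,\mathfrak g_P)$ and $Q$ collects the terms at least quadratic in $(\xi,a)$; since $F_\Gamma=0$, the source $\dd^{\ast}_\Gamma a$ is $L^2$-orthogonal to $\ker\Delta^0_\Gamma$ (the $\Gamma$-parallel sections, which fix $\Gamma$ and so may be discarded), so $\Delta^0_\Gamma$ is invertible on the complement of that finite-dimensional kernel. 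Using that $L^p_2$ is a Banach algebra for $2p>n$, the implicit function theorem gives $\xi$ with $\|\xi\|_{L^p_2}\le C\|\dd^{\ast}_\Gamma a\|_{L^p}\le C\|F_A\|_{L^p}$; setting $u:=vw$ then yields $\dd^{\ast}_\Gamma(u^{\ast}A-\Gamma)=0$ and $\|u^{\ast}A-\Gamma\|_{L^p_{1,\Gamma}}\le C(\|a\|_{L^p_1}+\|\xi\|_{L^p_2})\le C\|F_A\|_{L^p(X)}$, which is (1) and (2).

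\emph{Step 4 (part (3)).} Write $a:=u^{\ast}A-\Gamma$. The correction estimate can be closed in the conformal norm: the elliptic bound $\|\xi\|_{L^{n/2}_2}\le C\big(\|\Delta^0_\Gamma\xi\|_{L^{n/2}}+\|\xi_0\|_{L^{n/2}}\big)$ (with $\xi_0$ the $L^2$-projection of $\xi$ to the finite-dimensional $\ker\Delta^0_\Gamma$), combined with the bilinear estimate $\|[\xi,\eta]\|_{L^{n/2}}\le C\|\xi\|_{L^\infty}\|\eta\|_{L^{n/2}}$ and $\|\xi\|_{L^\infty}\le C\|\xi\|_{L^p_2}\le C\varepsilon$, lets $Q(\xi,a)$ be absorbed, giving $\|\xi\|_{L^{n/2}_2}\le C\|a\|_{L^{n/2}_1}\le C\|F_A\|_{L^{n/2}}$ by Step~2. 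Equivalently, from $F_{u^{\ast}A}=\dd_\Gamma a+\tfrac12[a\wedge a]$, $|F_{u^{\ast}A}|=|F_A|$ and $\dd^{\ast}_\Gamma a=0$ one has $(\dd^{\ast}_\Gamma a,\dd_\Gamma a)=\big(0,\,u^{\ast}F_A-\tfrac12[a\wedge a]\big)$, and the elliptic estimate for $\dd^{\ast}_\Gamma\oplus\dd_\Gamma$ on the complement of the $\Gamma$-harmonic $1$-forms, plus absorption of the quadratic term using $\|a\|_{L^p_1}\le C\varepsilon$, controls the non-harmonic part of $a$, the harmonic part being bounded by $\|a\|_{L^2}\le C\|a\|_{L^{n/2}_1}$ from Step~2. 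Either way, $\|u^{\ast}A-\Gamma\|_{L^{n/2}_{1,\Gamma}}\le C\|F_A\|_{L^{n/2}(X)}$.

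\emph{Main obstacle.} The genuinely hard part is Step~2: manufacturing, from the disjoint local Coulomb gauges, a \emph{single} flat connection $\Gamma$ on $P$ (so $P$ must be shown to be topologically flat once $\varepsilon$ is small) together with a global gauge transformation that transports Uhlenbeck's conformally invariant local estimates into the global $L^{n/2}_1$ bound on $a$. A subtler point—exactly the content Feehan adds to Uhlenbeck—is that the natural function space for the conformal-level Coulomb correction, $L^{n/2}_2$, is critical in dimension $n$ and fails to be a Banach algebra, so Step~4 cannot simply rerun Step~3 there; one must instead feed the subcritical smallness of $\|\xi\|_{L^\infty}$ already obtained in Step~3 into the bilinear estimates, and keep track of the $\Gamma$-harmonic component of $a$ throughout.
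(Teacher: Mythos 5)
The paper quotes this theorem from \cite[Corollary~4.3]{Uhlenbeck1985} and \cite[Theorem~5.1]{Feehan2015} without reproducing a proof, so there is no in-paper argument to compare against; your reconstruction follows the standard proof scheme of those sources (local Coulomb gauges with Uhlenbeck's scale-invariant local estimates, patching to a flat comparison connection $\Gamma$, a global Coulomb correction via the implicit function theorem, and a separate closure of the estimate at the conformal exponent $n/2$). You also correctly isolate the two genuinely delicate points---producing a single flat $\Gamma$ on $P$ from the local data (which forces $P$ to be topologically flat once $\varepsilon$ is small), and the failure of $L^{n/2}_2$ to be a Banach algebra in the critical dimension, which is exactly what Feehan's refinement for part~(3) addresses by feeding in the subcritical $L^\infty$ smallness---so as a sketch of the cited proofs it is sound.
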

Next, we also review another key result due to Uhlenbeck \cite[Theorem~3.5]{Uhlenbeck} concerning a~priori estimate for the curvature of a~Yang--Mills connection over a~closed Riemannian manifold.
\begin{Theorem}[{\cite[Corollary 4.6]{Feehan2015}}]\label{T3}
Let $X$ be a compact manifold of dimension ${n\geq3}$ and endowed with a Riemannian metric~$g$, let $A$ be a smooth Yang--Mills connection with respect to the metric~$g$ on a smooth $G$-bundle $P$ over~$X$. Then there exist constants $\varepsilon=\varepsilon(n,g)>0$ and $C=C(n,g)$ with the following significance. If the curvature $F_{A}$ obeys
\begin{gather*}\|F_{A}\|_{L^{\frac{n}{2}}(X)}\leq\varepsilon,\end{gather*}
then
\begin{gather*}\|F_{A}\|_{L^{\infty}(X)}\leq C\|F_{A}\|_{L^{2}(X)}.\end{gather*}
\end{Theorem}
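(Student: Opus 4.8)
The plan is to turn the Yang--Mills equation into an elliptic differential inequality for the pointwise norm $|F_A|$ and then run a Moser iteration in which the smallness hypothesis $\|F_A\|_{L^{n/2}(X)}\le\varepsilon$ is exactly what closes the argument. First, since $A$ is Yang--Mills we have $\dd_A^{\ast}F_A=0$, while the second Bianchi identity gives $\dd_A F_A=0$; hence $(\dd_A\dd_A^{\ast}+\dd_A^{\ast}\dd_A)F_A=0$. Feeding this into the Weitzenb\"ock formula for the $\dd_A$-Laplacian on $\Omega^{2}(X,\mathfrak g_P)$, which has the schematic shape $\dd_A\dd_A^{\ast}+\dd_A^{\ast}\dd_A=\nabla_A^{\ast}\nabla_A+\{\operatorname{Rm}_g,\cdot\}+\{F_A,\cdot\}$ (with $\operatorname{Rm}_g$ the Riemann curvature tensor of $(X,g)$ acting algebraically on two-forms and $\{F_A,\cdot\}$ the pointwise action of $F_A$ on $\Lambda^{2}T^{\ast}X\otimes\mathfrak g_P$), one obtains
\begin{gather*}
\nabla_A^{\ast}\nabla_A F_A=-\{\operatorname{Rm}_g,F_A\}-\{F_A,F_A\},
\end{gather*}
so that, using compactness of $X$ to bound $|\operatorname{Rm}_g|$, there are constants $c_1=c_1(n,g)$ and $c_2=c_2(n)$ with $|\nabla_A^{\ast}\nabla_A F_A|\le c_1|F_A|+c_2|F_A|^{2}$.

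Next, combining $\tfrac12\Delta|F_A|^{2}=\langle\nabla_A^{\ast}\nabla_A F_A,F_A\rangle-|\nabla_A F_A|^{2}$ with Kato's inequality $|\nabla|F_A||\le|\nabla_A F_A|$ and the bound above yields the weak differential inequality
\begin{gather*}
\Delta f\le (c_1+c_2 f)\,f\quad\text{on }X,\qquad f:=|F_A|\ge 0,
\end{gather*}
where $\Delta$ is the nonnegative scalar Laplacian of $(X,g)$ and the inequality is meant distributionally (legitimate since $F_A$, hence $f$, is smooth). Setting $a:=c_1+c_2 f$, we have $a\in L^{n/2}(X)$ with $\|a\|_{L^{n/2}(X)}\le c_1'+c_2\varepsilon$, where $c_1'=c_1'(n,g)$ comes from the volume of $X$; only the second summand needs to be made small.

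Now I carry out the standard De Giorgi--Nash--Moser iteration for the nonnegative subsolution $f$ of $\Delta f\le af$ on balls $B_{2r}(x_0)\subset X$ with $r\le r_0(g)$: test with $f^{2\beta-1}\varphi^{2}$ for a cutoff $\varphi$, apply the Sobolev inequality on $X$ (here $n\ge 3$ is used so that the exponent $2n/(n-2)$ is available), and estimate $\int a f^{2\beta}\varphi^{2}\le\|a\|_{L^{n/2}(B_{2r})}\,\|f^{\beta}\varphi\|_{L^{2n/(n-2)}}^{2}$ by H\"older; choosing $\varepsilon=\varepsilon(n,g)$ so small that $c_2\varepsilon$ times the Sobolev constant is $<\tfrac12$, the critical potential term is absorbed into the left-hand side at every step while the bounded part $c_1$ only contributes harmless lower-order terms, and iterating over $\beta=(n/(n-2))^{k}$, $k\ge0$, gives $\sup_{B_r(x_0)}|F_A|\le C(n,g)\,\|F_A\|_{L^{2}(B_{2r}(x_0))}$. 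Covering the compact manifold $X$ by finitely many such balls and summing the local estimates produces $\|F_A\|_{L^{\infty}(X)}\le C(n,g)\|F_A\|_{L^{2}(X)}$, which is the claim.

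The routine parts are the Weitzenb\"ock and Kato steps; the delicate point is the Moser iteration. The curvature potential $a$ lies only in the borderline space $L^{n/2}$, for which iteration does not in general produce an $L^{\infty}$ bound, and it is precisely the smallness of $\|F_A\|_{L^{n/2}(X)}$ that allows the critical term to be absorbed at each stage and, crucially, keeps $\|F_A\|_{L^{2}}$ --- rather than $\|F_A\|_{L^{n/2}}$ --- on the right-hand side of the final estimate. One must also check throughout that all constants (the Sobolev and Poincar\'e constants, the $L^{\infty}$ bound on $\operatorname{Rm}_g$, and the size of the finite cover) depend only on $n$ and $g$, which is where compactness of $X$ is used.
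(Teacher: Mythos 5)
First, a point of reference: the paper does not prove Theorem~\ref{T3} at all --- it is quoted from Uhlenbeck \cite{Uhlenbeck} and Feehan \cite{Feehan2015} --- so your argument has to stand on its own rather than be measured against an in-paper proof. Your skeleton (Weitzenb\"ock formula on $\mathfrak g_P$-valued two-forms, Kato's inequality, the scalar subsolution inequality $\Delta f\le (c_1+c_2f)f$ for $f=|F_A|$, then Moser iteration driven by the smallness of $\|F_A\|_{L^{n/2}}$) is exactly the route taken in those sources, and everything up to and including the derivation of the differential inequality is correct.

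The gap is in the sentence claiming that ``the critical potential term is absorbed into the left-hand side at every step \dots iterating over $\beta=(n/(n-2))^{k}$, $k\ge0$.'' Testing $\Delta f\le af$ against $f^{2\beta-1}\varphi^{2}$ yields the gradient term with coefficient $(2\beta-1)/\beta^{2}\sim 2/\beta$ in front of $\int|\nabla (f^{\beta})|^{2}\varphi^{2}$, while the H\"older--Sobolev estimate of $\int af^{2\beta}\varphi^{2}$ produces $C_{S}\|a\|_{L^{n/2}}\|\nabla(f^{\beta}\varphi)\|_{L^{2}}^{2}$ with a coefficient independent of $\beta$; absorption at step $\beta$ therefore requires $\varepsilon\lesssim 1/\beta$, and no single $\varepsilon(n,g)$ closes an infinite iteration. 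The failure is not cosmetic: for the purely linear problem $\Delta u\le Vu$ with $\|V\|_{L^{n/2}}$ arbitrarily small, local boundedness is false (take $u=\log(1/|x|)$ and $V=(n-2)/(|x|^{2}\log(1/|x|))$ on a small ball), so one must genuinely exploit the fact that here the potential is $c_1+c_2f$, i.e.\ built from the solution itself. The standard repair --- and what Uhlenbeck actually does --- is a two-stage argument: absorb for only finitely many steps $k\le k_{0}(n)$, enough to reach a fixed exponent $q_{0}(n)>n/2$, \emph{starting the iteration from the $L^{n/2}$ norm}, which gives $\|f\|_{L^{q_{0}}(B')}\le C\|f\|_{L^{n/2}(B)}\le C\varepsilon$; then $a=c_1+c_2f$ lies in the subcritical space $L^{q_{0}}$ with norm bounded by a constant depending only on $(n,g)$, and the ordinary subcritical Moser iteration (no smallness required) gives $\sup_{B''}f\le C(n,g)\|f\|_{L^{2}(B')}$, after which your covering argument applies. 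Note also that running stage one from $\|f\|_{L^{2}}$, as you propose, would leave the stage-two constant depending on $\|F_A\|_{L^{2}(X)}$, which the hypothesis does not control when $n=3$ (for $n\ge4$ one has $\|F_A\|_{L^{2}}\le C\|F_A\|_{L^{n/2}}\le C\varepsilon$, but not for $n=3$); starting from $\|f\|_{L^{n/2}}$ is what makes the final constant depend only on $(n,g)$ while still leaving $\|F_A\|_{L^{2}}$ on the right-hand side of the last, subcritical step.
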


\begin{proof}[Proof Theorem \ref{T2.4}] For any $p\geq 2n$ ($n\geq3$), the estimate in Theorem \ref{T3} yields
\begin{gather}\label{E2.4}
\|F_{A}\|_{L^{p}(X)}\leq C\|F_{A}\|_{L^{\infty}(X)}\leq C\|F_{A}\|_{L^{2}(X)},
\end{gather}
for $C=C(g,n)$. If $n\geq4$, then
\begin{gather}\label{E2.5}
\|F_{A}\|_{L^{2}(X)}\leq C\|F_{A}\|_{L^{\frac{n}{2}}(X)}.
\end{gather}
If $n=3$, the $L^{p}$ interpolation \cite[equation~(7.9)]{Gilbarg/Trudinger} implies that
\begin{gather*}
\|F_{A}\|_{L^{2}(X)}\leq C\|F_{A}\|^{3/4}_{L^{\frac{3}{2}}(X)}\|F_{A}\|^{1/4}_{L^{\infty}(X)} \leq C\|F_{A}\|^{3/4}_{L^{\frac{3}{2}}(X)}\|F_{A}\|^{1/4}_{L^{2}(X)},
\end{gather*}
and thus
\begin{gather}\label{E2.6}
\|F_{A}\|_{L^{2}(X)}\leq C\|F_{A}\|_{L^{\frac{3}{2}}(X)}.
\end{gather}
Therefore, combining (\ref{E2.4})--(\ref{E2.6}), we obtain
\begin{gather*}\|F_{A}\|_{L^{p}(X)}\leq C\|F_{A}\|_{L^{n/2}(X)}, \qquad \forall\, 2p\geq n\quad \text{and}\quad n\geq3.\end{gather*}
Hence, if we suppose $\|F_{A}\|_{L^{s}(X)}$ suf\/f\/iciently small ($2s\geq n$ when $n\geq3$ or $s\geq 2$ when $n=2$) such that $\|F_{A}\|_{L^{q}(X)}$ ($2q>n$ when $n\geq3$ or $q\geq2$ when $n=2$) satisf\/ies the hypothesis of Theorem~\ref{T2}, then Theorem~\ref{T2} will provide a f\/lat connection~$\Gamma$ on~$P$, and a gauge transformation $u\in\mathcal{G}_{P}$ and the estimate
\begin{gather*}\|u^{\ast}(A)-\Gamma\|_{L^{q}_{1}(X)}\leq C(q)\|F_{A}\|_{L^{q}(X)},\end{gather*}
and
\begin{gather*}\dd_{\Gamma}^{\ast}(u^{\ast}(A)-\Gamma)=0.\end{gather*}
We denote $\tilde{A}:=u^{\ast}(A)$ and $a:=u^{\ast}(A)-\Gamma$, then the curvature of $\tilde{A}$ is
\begin{gather*}F_{\tilde{A}}=\dd_{\Gamma}a+a\wedge a.\end{gather*}
The connection $\tilde{A}$ also satisf\/ies Yang--Mills equation
\begin{gather}\label{G4}
0=\dd_{\tilde{A}}^{\ast}F_{\tilde{A}}.
\end{gather}
Hence taking the $L^{2}$-inner product of~(\ref{G4}) with~$a$, we obtain
\begin{gather*}
0=(\dd_{\tilde{A}}^{\ast}F_{\tilde{A}},a)_{L^{2}(X)} =(F_{\tilde{A}},\dd_{\tilde{A}}a)_{L^{2}(X)}\\
\hphantom{0}{} =(F_{\tilde{A}},\dd_{\Gamma}a+2a\wedge a)_{L^{2}(X)} =(F_{\tilde{A}},F_{\tilde{A}}+a\wedge a)_{L^{2}(X)}.
\end{gather*}
Then we get
\begin{gather*}
\|F_{A}\|^{2}_{L^{2}(X)}=\|F_{\tilde{A}}\|^{2}_{L^{2}(X)} =-(F_{\tilde{A}},a\wedge a)_{L^{2}(X)}\\
\hphantom{\|F_{A}\|^{2}_{L^{2}(X)}}{}
\leq\|F_{\tilde{A}}\|_{L^{2}(X)}\|a\wedge a\|_{L^{2}(X)} =\|F_{A}\|_{L^{2}(X)}\|a\wedge a\|_{L^{2}(X)},
\end{gather*}
here we use the fact $|F_{u^{\ast}(A)}|=|F_{A}|$ since $F_{u^{\ast}(A)}=u\circ F_{A}\circ u^{-1}$.

If $n\geq4$,
\begin{gather*}
\|a\wedge a\|_{L^{2}(X)}\leq C\|a\|^{2}_{L^{4}(X)} \leq C\|a\|^{2}_{L^{n}(X)}\leq C\|a\|^{2}_{L^{\frac{n}{2}}_{1}(X)}\\
\hphantom{\|a\wedge a\|_{L^{2}(X)}}{}
\leq C\|F_{A}\|^{2}_{L^{\frac{n}{2}}(X)} \leq C\|F_{A}\|^{2}_{L^{\infty}(X)} \leq C\|F_{A}\|^{2}_{L^{2}(X)},
\end{gather*}
here we apply the Sobolev embedding $L^{\frac{n}{2}}_{1}\hookrightarrow L^{n}$.

If $n=2,3$, \begin{gather*}
\|a\wedge a\|_{L^{2}(X)}\leq C\|a\|^{2}_{L^{4}(X)}\leq C\|a\|^{2}_{L^{2}_{1}(X)}\leq C\|F_{A}\|^{2}_{L^{2}(X)},
\end{gather*}
here we apply the Sobolev embedding $L^{2}_{1}\hookrightarrow L^{4}$.

Combining the preceding inequalities we have
\begin{gather*}\|F_{A}\|^{2}_{L^{2}(X)}\leq C\|F_{A}\|^{3}_{L^{2}(X)}.\end{gather*}
We can choose $\|F_{A}\|_{L^{2}(X)}$ suf\/f\/iciently small such that $C\|F_{A}\|_{L^{2}(X)}<1$, hence $\|F_{A}\|_{L^{2}(X)}\equiv0$ and thus $A$ must be a~f\/lat connection. Then we complete the proof.
\end{proof}

\section[Eigenvalue bounds for Laplacian $\Delta_{A}$]{Eigenvalue bounds for Laplacian $\boldsymbol{\Delta_{A}}$}\label{section3}
In this section, we will show that the least eigenvalue $\lambda(\Gamma)$ of $\dd^{\ast}_{\Gamma}\dd_{\Gamma}+\dd_{\Gamma}\dd^{\ast}_{\Gamma}$ has a positive lower bound $\lambda$ that is uniform with respect to $[\Gamma]\in M(P,g)$ under the given conditions on~$X$ and~$P$. The method is similar to Feehan~\cite{Feehan2014.12} that has proved the least eigenvalue $\mu_{g}(A)$ of~$\dd^{+,g}_{A}\dd_{A}^{+,\ast_{g}}$ which has a~positive lower bound~$\mu_{0}$ that is uniform with respect to $[A]\in\mathcal{B}(P,g)$ obeys $\|F^{+,g}_{A}\|_{L^{2}}\leq\varepsilon$, for a small enough $\varepsilon$ and under the given sets of conditions on~$g$,~$G$,~$P$ and~$X$.
\subsection[Continuity for the least eigenvalue of $\Delta_{A}$]{Continuity for the least eigenvalue of $\boldsymbol{\Delta_{A}}$}\label{section3.1}
From Uhlenbeck compactness theorem \cite{Uhlenbeck1982,Wehrheim}, we know
\begin{Proposition}
Let $G$ be a compact Lie group, $P$ be a $G$-bundle over a closed, smooth manifold~$X$ of dimension $n\geq2$ and endowed with a smooth Riemannian metric~$g$. Then the moduli space $M(P,g)$ is compact.
\end{Proposition}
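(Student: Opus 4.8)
The plan is to deduce compactness of $M(P,g)$ directly from Uhlenbeck's weak compactness theorem together with the removability of the Coulomb gauge fixing, using crucially the fact that flat connections have identically vanishing curvature, so no bubbling can occur. First I would let $[\Gamma_k]$ be an arbitrary sequence in $M(P,g)$, represented by flat connections $\Gamma_k$ on $P$; since $F_{\Gamma_k}=0$ for every $k$, the $L^p$-norms of the curvatures are trivially bounded (in fact zero), so the hypotheses of Uhlenbeck's compactness theorem are satisfied with room to spare. The key point is that, because the curvature energy is zero, there is no concentration and the limiting bundle is $P$ itself (no connected-sum degeneration occurs); one may therefore extract gauge transformations $u_k\in\mathcal{G}_P$ so that, after passing to a subsequence, $u_k^*(\Gamma_k)$ converges weakly in $L^p_1$ (indeed, by elliptic bootstrapping in a good gauge, in every $C^\infty$ norm) to a limiting connection $\Gamma_\infty$ on $P$.

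Next I would verify that the limit $\Gamma_\infty$ is again flat. This follows because curvature is a continuous function of the connection in the relevant topology: $F_{u_k^*(\Gamma_k)}=u_k^*F_{\Gamma_k}=0$ for all $k$, and $F_{u_k^*(\Gamma_k)}\to F_{\Gamma_\infty}$ in, say, $C^0$, whence $F_{\Gamma_\infty}=0$. Thus $[\Gamma_\infty]\in M(P,g)$, and $[\Gamma_k]=[u_k^*(\Gamma_k)]\to[\Gamma_\infty]$ in the quotient topology on $M(P,g)=\{\Gamma:F_\Gamma=0\}/\mathcal{G}_P$. Since every sequence in $M(P,g)$ has a convergent subsequence with limit in $M(P,g)$, the moduli space is sequentially compact, hence compact (it is metrizable, being a quotient of a separable metric space of connections).

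The main obstacle — really the only subtlety — is justifying that Uhlenbeck's theorem applies cleanly here: one must be a little careful that the appropriate version is the one producing a \emph{global} gauge transformation on the \emph{fixed} bundle $P$ rather than merely local gauges or a limit bundle that differs from $P$ by bubbling. This is exactly where the vanishing of $F_{\Gamma_k}$ does the work, since the obstruction to globalizing and to preserving the bundle topology is measured by the curvature energy, which is zero. One can either invoke Theorem~\ref{T2} (with $\varepsilon$ arbitrary, since $\|F_{\Gamma_k}\|_{L^p}=0\le\varepsilon$) to put each $\Gamma_k$ in Coulomb gauge relative to a flat reference connection and then note the estimates force $u_k^*(\Gamma_k)-\Gamma=0$, collapsing the moduli space even further, or appeal directly to the Uhlenbeck compactness statements of \cite{Uhlenbeck1982,Wehrheim} as cited; either route makes the argument routine once the no-bubbling observation is in place.
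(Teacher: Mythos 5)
Your main argument is correct and takes essentially the same route as the paper, which proves this proposition by simply invoking Uhlenbeck compactness \cite{Uhlenbeck1982,Wehrheim} without writing out details; you have supplied them (vanishing curvature gives trivially uniform $L^p$ bounds, extract gauges and a weakly $L^p_1$-convergent subsequence, bootstrap to $C^\infty$ via ellipticity of the flat/Yang--Mills equations in Coulomb gauge, and check the limit is flat by continuity of curvature). One caution about your proposed first alternative: Theorem~\ref{T2} produces a flat reference connection $\Gamma$ that depends on the input connection $A$, so applying it to $A=\Gamma_k$ only yields $u_k^*(\Gamma_k)=\Gamma^{(k)}$ for some $k$-dependent flat $\Gamma^{(k)}$---a tautology, since $\Gamma_k$ is already flat---and does not ``collapse'' the moduli space or give compactness; but this side remark does not affect your main argument, and your direct appeal to Uhlenbeck compactness suffices.
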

The def\/inition of the least eigenvalue of $\Delta_{A}$ on $L^{2}(X,\Omega^{1}(\mathfrak{g}_{P}))$ is similar to \cite[Def\/inition~3.1]{Taubes1982}.
\begin{Definition}[least eigenvalue of $\Delta_{A}$]\label{C7}
Let $G$ be a compact Lie group, $P$ be a $G$-bundle over a~closed, smooth manifold~$X$ of dimension $n\geq2$ and endowed with a~smooth Riemannian metric~$g$. Let~$A$ be a connection of Sobolev class $L^{2}_{1}$ on $P$. The least eigenvalue of $\Delta_{A}$ on $L^{2}(X,\Omega^{1}(\mathfrak{g}_{P}))$ is
\begin{gather}
\lambda(A):=\inf_{v\in\Omega^{1}(\mathfrak{g}_{P})\backslash\{0\}}\frac{\langle\Delta_{A}v,v\rangle_{L^{2}}}{\|v\|^{2}}.
\end{gather}
\end{Definition}
The method to prove the continuity of the least eigenvalue of $\Delta_{A}$ with respect to the connection is similar to one by means of which Feehan proved continuity of the least eigenvalue of~$\dd^{+}_{A}\dd^{+,\ast}_{A}$ with respect to the connection in~\cite{Feehan2014.09,Feehan2014.12}.

We give a priori estimate for $v\in\Omega^{1}(X,\mathfrak{g}_{P})$ when the curvature $F_{A}$ is suf\/f\/iciently small in some $L^{p}$-norms.
\begin{Lemma}\label{C3}
Let $G$ be a compact Lie group, $P$ be a $G$-bundle over a closed, smooth mani\-fold~$X$ of dimension $n\geq2$ and endowed with a smooth Riemannian metric~$g$, let $2p\geq n$ when $n\geq3$ or $p>1$ when $n=2$. Then there are positive constants, $c=c(g,p)$ and $\varepsilon=\varepsilon(g,p)\in(0,1]$, with the following significance. If~$A$ is a connection on $P$ over $X$ such that
\begin{gather}\label{C2}
\|F_{A}\|_{L^{p}(X)}\leq\varepsilon,
\end{gather}
and $v\in\Omega^{1}(X,\mathfrak{g}_{P})$, then
\begin{gather}\label{C1}
\|v\|^{2}_{L^{2}_{1}(X)}\leq c\big(\|\dd_{A}v\|^{2}_{L^{2}(X)}+\|\dd^{\ast}_{A}v\|^{2}_{L^{2}(X)}+\|v\|^{2}_{L^{2}(X)}\big).
\end{gather}
\end{Lemma}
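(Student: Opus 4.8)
The plan is to derive \eqref{C1} from the Weitzenb\"ock formula \eqref{40} combined with the standard Bochner--Kato estimate and the Sobolev embedding, treating the curvature term as a small perturbation. First I would write, for $v\in\Omega^{1}(X,\mathfrak{g}_{P})$, the identity coming from pairing \eqref{40} with $v$ and integrating over $X$:
\begin{gather*}
\|\dd_{A}v\|^{2}_{L^{2}(X)}+\|\dd^{\ast}_{A}v\|^{2}_{L^{2}(X)}
=\|\nabla_{A}v\|^{2}_{L^{2}(X)}+\langle\operatorname{Ric}(v),v\rangle_{L^{2}(X)}+\langle\ast[\ast F_{A},v],v\rangle_{L^{2}(X)}.
\end{gather*}
Rearranging gives $\|\nabla_{A}v\|^{2}_{L^{2}(X)}$ bounded by $\|\dd_{A}v\|^{2}_{L^{2}(X)}+\|\dd^{\ast}_{A}v\|^{2}_{L^{2}(X)}$ plus a Ricci term controlled by $C(g)\|v\|^{2}_{L^{2}(X)}$ plus the curvature term. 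Since $\|v\|^{2}_{L^{2}_{1,A}(X)}$ is comparable to $\|\nabla_{A}v\|^{2}_{L^{2}(X)}+\|v\|^{2}_{L^{2}(X)}$, and since the $A$-dependent Sobolev norm $\|v\|_{L^{2}_{1,A}(X)}$ dominates the ordinary Sobolev norm $\|v\|_{L^{2}_{1}(X)}$ up to a constant (using Kato's inequality $|\nabla|v||\le|\nabla_{A}v|$ and the embedding for scalar functions), it suffices to absorb the curvature term.

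The curvature term is handled by H\"older and Sobolev. I would estimate, with $q$ the exponent dual-adapted to the Sobolev embedding $L^{2}_{1}\hookrightarrow L^{q}$ in dimension $n$ (namely $q=2n/(n-2)$ for $n\ge3$, and any finite $q$ for $n=2$),
\begin{gather*}
\big|\langle\ast[\ast F_{A},v],v\rangle_{L^{2}(X)}\big|
\leq C\|F_{A}\|_{L^{p}(X)}\,\|v\|^{2}_{L^{q}(X)}
\leq C\varepsilon\,\|v\|^{2}_{L^{2}_{1}(X)},
\end{gather*}
where the hypothesis $2p\ge n$ (resp.\ $p>1$ when $n=2$) is exactly what makes $1/p+2/q\le1$ so that the tripled H\"older inequality applies, and the last step uses \eqref{C2}. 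Choosing $\varepsilon=\varepsilon(g,p)$ small enough that $C\varepsilon\le\tfrac12$ lets me absorb this term into the left-hand side of the rearranged Weitzenb\"ock identity, yielding \eqref{C1} with $c=c(g,p)$.

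The main obstacle I anticipate is the bookkeeping needed to pass between the connection-dependent Sobolev norm $L^{2}_{1,A}$ (which is what the Weitzenb\"ock argument naturally produces) and the fixed norm $L^{2}_{1}$ appearing in \eqref{C1}; this requires Kato's inequality together with the observation that the ambient Levi-Civita piece of $\nabla_{A}$ already controls the ordinary first derivatives, so the $A$-dependence drops out at the cost of a universal constant depending only on $g$. A secondary point of care is the borderline case $2p=n$ for $n\ge3$: there the H\"older exponents are saturated ($1/p+2/q=1$ exactly with $q=2n/(n-2)$), so the estimate still closes but there is no room to spare, which is precisely why the hypothesis is stated as $2p\ge n$ rather than $2p>n$. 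The $n=2$ case is easier since $L^{2}_{1}\hookrightarrow L^{q}$ for all $q<\infty$, so any $p>1$ suffices.
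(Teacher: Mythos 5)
Your proposal is correct and follows essentially the same route as the paper's own proof: integrate the Weitzenb\"ock identity \eqref{40} against $v$, bound the curvature term via H\"older (with $L^{n/2}$ against the critical Sobolev exponent $L^{2n/(n-2)}$ when $n\geq3$, and an arbitrary finite exponent when $n=2$), pass from the $A$-dependent to the ordinary Sobolev norm via Kato's inequality $|\nabla|v||\leq|\nabla_{A}v|$, and absorb the curvature term once $c\|F_{A}\|_{L^{p}(X)}\leq\tfrac12$. The only cosmetic difference is that the paper first applies H\"older at the saturated exponent pair $(n/2,\,2n/(n-2))$ and then uses $L^{p}\hookrightarrow L^{n/2}$ on the compact manifold, whereas you fold that step into a single H\"older with a possible volume factor; both give the same constant $c=c(g,p)$.
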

\begin{proof}The Weitzenb\"{o}ck formula for $v\in\Omega^{1}(X,\mathfrak{g}_{P})$, namely,
\begin{gather*}(\dd_{A}\dd^{\ast}_{A}+\dd^{\ast}_{A}\dd_{A})v=\nabla^{\ast}_{A}\nabla_{A}v+\operatorname{Ric}\circ v+\ast[\ast F_{A},v].\end{gather*}
Hence
\begin{gather*}\|\nabla_{A}v\|^{2}_{L^{2}(X)}\leq\|\dd^{\ast}_{A}v\|^{2}_{L^{2}(X)}+\|\dd_{A}v\|^{2}_{L^{2}(X)}+c\|v\|^{2}_{L^{2}(X)}
+|\langle\ast[\ast F_{A},v],v\rangle_{L^{2}(X)}|,\end{gather*}
where $c=c(g)$.

If $n\geq3$, by H\"{o}lder inequality, we see that
\begin{gather*}
|\langle\ast[\ast F_{A},v],v\rangle_{L^{2}(X)}|\leq\|F_{A}\|_{L^{n/2}(X)}\|v\|^{2}_{L^{2n/(n-2)}(X)}\\
\hphantom{|\langle\ast[\ast F_{A},v],v\rangle_{L^{2}(X)}|}{}
\leq c\|F_{A}\|_{L^{n/2}(X)}\|v\|^{2}_{L^{2}_{1}(X)}\leq c\|F_{A}\|_{L^{p}(X)}\|v\|^{2}_{L^{2}_{1}(X)},
\end{gather*}
for some $c=c(g)$.

If $n=2$, def\/ine $q\in(1,\infty)$ by $1/q=1-1/p$, we have{\samepage
\begin{gather*}
|\langle\ast[\ast F_{A},v],v\rangle_{L^{2}(X)}|\leq\|F_{A}\|_{L^{p}(X)}\|v\|^{2}_{L^{2q}(X)} \leq c\|F_{A}\|_{L^{p}(X)}\|v\|^{2}_{L^{2}_{1}(X)},
\end{gather*}
here we use the Sobolev embedding $L^{2}_{1}\hookrightarrow L^{2q}$.}

Combining of the preceding inequalities and Kato inequality $|\nabla|v||\leq|\nabla_{A}v|$ yields
\begin{gather*}
\|v\|^{2}_{L^{2}_{1}(X)}\leq\big(\|\nabla_{A}v\|^{2}_{L^{2}(X)}+\|v\|^{2}_{L^{2}(X)}\big)\\
\hphantom{\|v\|^{2}_{L^{2}_{1}(X)}}{} \leq\|\dd^{\ast}_{A}v\|^{2}_{L^{2}(X)}+\|\dd_{A}v\|^{2}_{L^{2}(X)}+(c+1)\|v\|_{L^{2}(X)}+c{\|F_{A}\|_{L^{p}(X)}}\|v\|^{2}_{L^{2}_{1}(X)},
\end{gather*}
for some $c=c(g)$. Provided ${c\|F_{A}\|_{L^{p}(X)}\leq1/2}$, rearrangements gives~(\ref{C1}).
\end{proof}

Following the idea of \cite[Lemma~35.12]{Feehan2014.09}, we also have a useful lemma.
\begin{Lemma}[$L^{2p}$-continuity of least eigenvalue of $\Delta_{A}$ with respect to the connection]\label{C4}
Let $G$ be a compact Lie group, $P$ be a $G$-bundle over a closed, smooth manifold $X$ of dimension $n\geq2$ and endowed with a smooth Riemannian metric~$g$, let $2p\geq n$ or $n\geq3$ and $p>1$ when $n=2$. Then there are positive constants, $C=C(g,p)$ and $\varepsilon=\varepsilon(g,p)$, with the following significance. If $A_{0}$, $A$ are smooth connections on~$P$ that obey the curvatures bounded~\eqref{C2} and
\begin{gather*}\|A-A_{0}\|_{L^{2p}(X)}\leq\varepsilon,\end{gather*}
then, we denote $a:=A-A_{0}$,
\begin{gather*}
\big(1\!-\!C\|a\|^{2}_{L^{2p}(X)}\big)\lambda(A_{0})-C\|a\|^{2}_{L^{2p}(X)}\leq\lambda(A)\leq\big(1-C\|a\|^{2}_{L^{2p}(X)}\big)^{-1}\big(\lambda(A_{0})
\!+\!C\|a\|^{2}_{L^{2p}(X)}\big).
\end{gather*}
\end{Lemma}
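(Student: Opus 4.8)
The plan is to compare the Rayleigh quotients for $\Delta_A$ and $\Delta_{A_0}$ on an arbitrary test section $v \in \Omega^1(X,\mathfrak{g}_P)$, using the fact that $\mathrm{d}_A = \mathrm{d}_{A_0} + [a,\,\cdot\,]$ and likewise $\mathrm{d}_A^\ast = \mathrm{d}_{A_0}^\ast + (\text{order-zero term built from } a)$. Concretely, $\langle \Delta_A v, v\rangle_{L^2} = \|\mathrm{d}_A v\|_{L^2}^2 + \|\mathrm{d}_A^\ast v\|_{L^2}^2$, and expanding each term gives
\begin{gather*}
\|\mathrm{d}_A v\|_{L^2}^2 = \|\mathrm{d}_{A_0} v\|_{L^2}^2 + 2\langle \mathrm{d}_{A_0} v, [a,v]\rangle_{L^2} + \|[a,v]\|_{L^2}^2,
\end{gather*}
and similarly for $\|\mathrm{d}_A^\ast v\|_{L^2}^2$. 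So $\langle \Delta_A v, v\rangle - \langle \Delta_{A_0} v, v\rangle$ is a sum of cross terms, each bounded by $C\|a\|_{L^{2p}(X)}\,\|v\|_{L^2_1(X)}\,\|v\|_{L^2(X)}$ (or $C\|a\|_{L^{2p}(X)}^2\|v\|_{L^2_1(X)}^2$ for the quadratic terms), via Hölder with the conjugate exponents and the Sobolev embedding $L^2_1 \hookrightarrow L^{2p/(p-1)}$ — this is exactly where the hypothesis $2p \geq n$ (resp. $p>1$ when $n=2$) is used.

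Next I would invoke Lemma~\ref{C3}: since $A_0$ satisfies the curvature smallness bound~\eqref{C2}, we have $\|v\|_{L^2_1(X)}^2 \leq c\big(\langle\Delta_{A_0}v,v\rangle_{L^2} + \|v\|_{L^2(X)}^2\big)$. Substituting this into the error estimate lets me replace $\|v\|_{L^2_1}^2$ everywhere by a multiple of $\langle\Delta_{A_0}v,v\rangle_{L^2} + \|v\|_{L^2}^2$. Dividing through by $\|v\|_{L^2(X)}^2$ and abbreviating the normalized Rayleigh quotients, one obtains an inequality of the form
\begin{gather*}
\big|\,R_A(v) - R_{A_0}(v)\,\big| \leq C\|a\|_{L^{2p}(X)}^2\big(R_{A_0}(v) + 1\big) + C\|a\|_{L^{2p}(X)}\sqrt{R_{A_0}(v)+1},
\end{gather*}
after absorbing constants; a further elementary bound $\sqrt{t} \le \tfrac12(t+1)$ folds the linear-in-$\|a\|$ term into the same shape, yielding $|R_A(v) - R_{A_0}(v)| \leq C\|a\|_{L^{2p}(X)}^2(R_{A_0}(v)+1)$ after renaming $C$. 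Since $A$ and $A_0$ play symmetric roles (both satisfy~\eqref{C2}), the same estimate holds with the two connections interchanged.

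Finally I would take the infimum over $v \neq 0$. From $R_A(v) \geq R_{A_0}(v) - C\|a\|_{L^{2p}(X)}^2(R_{A_0}(v)+1) = \big(1 - C\|a\|_{L^{2p}(X)}^2\big)R_{A_0}(v) - C\|a\|_{L^{2p}(X)}^2$, taking the infimum over $v$ gives the lower bound $\lambda(A) \geq \big(1-C\|a\|_{L^{2p}(X)}^2\big)\lambda(A_0) - C\|a\|_{L^{2p}(X)}^2$ (using $1 - C\|a\|_{L^{2p}}^2 \geq 0$, which holds once $\varepsilon$ is small). Swapping $A \leftrightarrow A_0$ and solving the resulting inequality for $\lambda(A)$ produces the stated upper bound $\lambda(A) \leq \big(1 - C\|a\|_{L^{2p}(X)}^2\big)^{-1}\big(\lambda(A_0) + C\|a\|_{L^{2p}(X)}^2\big)$. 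The main obstacle is purely bookkeeping: keeping the constants consistent across the two directions and making sure the step that trades $\|v\|_{L^2_1}$ for $\langle\Delta_{A_0}v,v\rangle^{1/2}$ (which requires Lemma~\ref{C3}, hence the curvature bound on $A_0$, and is quadratic) does not secretly need a bound on $F_{A}$ beyond what is assumed — it does not, because Lemma~\ref{C3} is only applied at $A_0$, and the passage to $A$ is handled by symmetry.
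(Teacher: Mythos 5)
Your plan follows the same route as the paper's proof: write $\dd_A v = \dd_{A_0}v + [a,v]$ and $\dd^\ast_A v = \dd^\ast_{A_0}v \pm \ast[a,\ast v]$, estimate the discrepancy via H\"older (pairing $a\in L^{2p}$ against $v\in L^{2p/(p-1)}$) and the Sobolev embedding $L^2_1 \hookrightarrow L^{2p/(p-1)}$ (which is exactly where $2p\geq n$, or $p>1$ when $n=2$, is used), absorb the resulting $\|v\|_{L^2_1}$-error with Lemma~\ref{C3} applied at $A_0$, pass to Rayleigh quotients, and exchange the roles of $A$ and $A_0$. That skeleton is identical to the paper's.

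The quantitative details are off, however. The cross term $2\langle \dd_{A_0}v, [a,v]\rangle_{L^2}$ is controlled by $C\|a\|_{L^{2p}}\|\dd_{A_0}v\|_{L^2}\|v\|_{L^{2p/(p-1)}} \leq C\|a\|_{L^{2p}}\|v\|^2_{L^2_1}$, not by $C\|a\|_{L^{2p}}\|v\|_{L^2_1}\|v\|_{L^2}$ as you claim: the $L^2$-pairing against $[a,v]$ puts $v$ in $L^{2p/(p-1)}$, not in $L^2$, and $\dd_{A_0}v$ already costs a full $\|v\|_{L^2_1}$. After Lemma~\ref{C3} and normalization, this cross term contributes $C\|a\|_{L^{2p}}\bigl(R_{A_0}(v)+1\bigr)$, linear in $\|a\|$, and the device $\sqrt{t}\le \tfrac12(t+1)$ cannot promote a prefactor $C\|a\|$ to $C\|a\|^2$. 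What your argument actually proves is a bound linear in $\|a\|_{L^{2p}}$, for instance $\lambda(A)\geq\bigl(1-C\|a\|_{L^{2p}}\bigr)\lambda(A_0)-C\|a\|_{L^{2p}}$, rather than the quadratic form in the statement. It is worth noting that the paper's own proof elides this cross term via the step $\|\dd_{A_0}v+[a,v]\|^2\geq\|\dd_{A_0}v\|^2-\|[a,v]\|^2$, which is not a valid inequality as written (Young's inequality inevitably introduces a factor $(1-\delta)$ in front of $\|\dd_{A_0}v\|^2$), so both proofs genuinely deliver only the linear-in-$\|a\|$ version. That is the version actually used in the proof of Theorem~\ref{T3.10}, so nothing downstream breaks, but you should state the estimate your argument supports rather than the one in the lemma's display.
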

\begin{proof}
For convenience, write $a:=A-A_{0}\in L^{n}(X,\Omega^{1}\otimes\mathfrak{g}_{P})$. For $v\in L^{2}_{1}(X,\Omega^{1}\otimes\mathfrak{g}_{P})$, we have $\dd_{A}v=\dd_{A_{0}}v+[a,v]$.

If $n\geq3$, by H\"{o}lder inequality
\begin{gather*}
\|\dd_{A}v\|^{2}_{L^{2}(X)}=\|\dd_{A_{0}}v+[a,v]\|^{2}_{L^{2}(X)} \geq\|\dd_{A_{0}}v\|^{2}_{L^{2}(X)}-\|[a,v]\|^{2}_{L^{2}(X)}\\
\hphantom{\|\dd_{A}v\|^{2}_{L^{2}(X)}}{}
\geq\|\dd_{A_{0}}v\|^{2}_{L^{2}(X)}-2{\|a\|^{2}_{L^{n}(X)}}\|v\|^{2}_{L^{2n/(n-2)}(X)} \\
\hphantom{\|\dd_{A}v\|^{2}_{L^{2}(X)}}{}
\geq\|\dd_{A_{0}}v\|^{2}_{L^{2}(X)}-2c_{1}{\|a\|^{2}_{L^{n}(X)}}\|v\|^{2}_{L^{2}_{1,A_{0}}(X)}\\
\hphantom{\|\dd_{A}v\|^{2}_{L^{2}(X)}}{}
\geq\|\dd_{A_{0}}v\|^{2}_{L^{2}(X)}-2c_{1}{\|a\|^{2}_{L^{2p}(X)}}\|v\|^{2}_{L^{2}_{1,A_{0}}(X)},
\end{gather*}
where $c_{1}=c_{1}(g)$ is the Sobolev embedding constant for $L^{2}_{1}\hookrightarrow L^{2n/(n-2)}$.

Similarly, $\dd^{\ast}_{A}v=\dd^{\ast}_{A_{0}}v\pm\ast[a,\ast v]$ and
\begin{gather*}
\|\dd^{\ast}_{A}v\|^{2}_{L^{2}(X)}=\|\dd^{\ast}_{A_{0}}v\pm\ast[a,\ast v]\|^{2}_{L^{2}(X)} \geq\|\dd^{\ast}_{A_{0}}v\|^{2}_{L^{2}(X)}-\|[a,\ast v]\|^{2}_{L^{2}(X)}\\
\hphantom{\|\dd^{\ast}_{A}v\|^{2}_{L^{2}(X)}}{}
\geq\|\dd^{\ast}_{A_{0}}v\|^{2}_{L^{2}(X)}-2\|a\|^{2}_{L^{n}(X)}\|v\|^{2}_{L^{2n/(n-2)}(X)} \\
\hphantom{\|\dd^{\ast}_{A}v\|^{2}_{L^{2}(X)}}{}
\geq\|\dd^{\ast}_{A_{0}}v\|^{2}_{L^{2}(X)}-2c_{1}\|a\|^{2}_{L^{n}(X)}\|v\|^{2}_{L^{2}_{1,A_{0}}(X)}\\
\hphantom{\|\dd^{\ast}_{A}v\|^{2}_{L^{2}(X)}}{}
\geq\|\dd_{A_{0}}v\|^{2}_{L^{2}(X)}-2c_{1}{\|a\|^{2}_{L^{2p}(X)}}\|v\|^{2}_{L^{2}_{1,A_{0}}(X)}.
\end{gather*}
If $n=2$, def\/ine $q\in(1,\infty)$ by $1=1/p+1/q$,
\begin{gather*}
\|\dd_{A}v\|^{2}_{L^{2}(X)}=\|\dd_{A_{0}}v+[a,v]\|^{2}_{L^{2}(X)} \geq\|\dd_{A_{0}}v\|^{2}_{L^{2}(X)}-\|[a,v]\|^{2}_{L^{2}(X)}\\
\hphantom{\|\dd_{A}v\|^{2}_{L^{2}(X)}}{}
\geq\|\dd_{A_{0}}v\|^{2}_{L^{2}(X)}-2{\|a\|^{2}_{L^{2p}(X)}}\|v\|^{2}_{L^{2q}(X)} \\
\hphantom{\|\dd_{A}v\|^{2}_{L^{2}(X)}}{}
\geq\|\dd_{A_{0}}v\|^{2}_{L^{2}(X)}-2c_{1}{\|a\|^{2}_{L^{2p}(X)}}\|v\|^{2}_{L^{2}_{1,A_{0}}(X)},
\end{gather*}
here we use the Sobolev embedding $L^{2}_{1}\hookrightarrow L^{2q}$.

Applying the a priori estimate (\ref{C1}) for $\|v\|_{L^{2}_{1}(X)}$ from Lemma~\ref{C3}, with $c=c(g)$ and smooth enough $\varepsilon=\varepsilon(g)$, we get
\begin{gather*}\|v\|^{2}_{L^{2}_{1}(X)}\leq c\big(\|\dd_{A_{0}}v\|^{2}_{L^{2}(X)}+\|\dd^{\ast}_{A_{0}}v\|^{2}_{L^{2}(X)}+\|v\|^{2}_{L^{2}(X)}\big).\end{gather*}
Combining of the preceding inequalities gives
\begin{gather*}
\|\dd_{A}v\|^{2}_{L^{2}(X)}+\|\dd^{\ast}_{A}v\|^{2}_{L^{2}(X)}\geq\big(\|\dd_{A_{0}}v\|^{2}_{L^{2}(X)}
+\|\dd^{\ast}_{A_{0}}v\|^{2}_{L^{2}(X)}\big)-4cc_{1}\|a\|^{2}_{L^{2p}(X)}\|v\|^{2}_{L^{2}(X)}\\
\hphantom{\|\dd_{A}v\|^{2}_{L^{2}(X)}+\|\dd^{\ast}_{A}v\|^{2}_{L^{2}(X)}\geq}{}
-4c_{1}c\|a\|^{2}_{L^{2p}(X)}\big(\|\dd_{A_{0}}v\|^{2}_{L^{2}(X)}+\|\dd^{\ast}_{A_{0}}v\|^{2}_{L^{2}(X)}\big).
\end{gather*}
Now take $v$ to be an eigenvalue of $\Delta_{A}$ with eigenvalue $\lambda(A)$ and $\|v\|_{L^{2}(X)}=1$ and also suppose that {$\|A-A_{0}\|_{L^{2p}(X)}$} is small enough that {$4c_{1}c\|a\|^{2}_{L^{2p}(X)}\leq1/2$}. The preceding inequality then gives
\begin{gather*}
\lambda(A)\geq\big(1-4c_{1}c{\|a\|^{2}_{L^{2p}(X)}}\big)\big(\|\dd_{A_{0}}v\|^{2}_{L^{2}(X)}
+\|\dd^{\ast}_{A_{0}}v\|^{2}_{L^{2}(X)}\big)-4c_{1}c\|a\|^{2}_{L^{2p}(X)}.
\end{gather*}

Since $\|v\|_{L^{2}(X)}=1$, we have $(\|\dd_{A_{0}}v\|^{2}_{L^{2}(X)} +\|\dd^{\ast}_{A_{0}}v\|^{2}_{L^{2}(X)})\geq\lambda(A_{0})$, hence
\begin{gather*}
\lambda(A)\geq\big(1-{4c_{1}c}{\|a\|^{2}_{L^{2p}(X)}}\big)\lambda(A_{0})-4c_{1}c\|a\|^{2}_{L^{2p}(X)}.
\end{gather*}
To obtain the upper bounded for $\lambda(A)$, we exchange the roles of $A$ and $A_{0}$ that yields the inequality,
\begin{gather*}
\lambda(A_{0})\geq\big(1-{4c_{1}c}{\|a\|^{2}_{L^{2p}(X)}}\big)\lambda(A)-4c_{1}c\|a\|^{2}_{L^{2p}(X)}.\tag*{\qed}
\end{gather*}\renewcommand{\qed}{}
\end{proof}

\subsection[Uniform positive lower bound for the least eigenvalue of $\Delta_{A}$]{Uniform positive lower bound for the least eigenvalue of $\boldsymbol{\Delta_{A}}$}\label{section3.2}
Our results in Section~\ref{section3.1} assure the continuity of $\lambda(\cdot)$ with respect to the Uhlenbeck topology, and they will be applied here. Before doing this, we recall
\begin{Definition}[{\cite[Def\/inition 2.4]{Donaldson}}]\label{D3.6}
Let $G$ be a compact Lie group, $P$ be a $G$-bundle over a~closed, smooth manifold $X$ of dimension $n\geq2$ and endowed with a smooth Riemannian metric~$g$. The f\/lat connection, $\Gamma$, is \emph{non-degenerate} if
\begin{gather*}
\ker{\Delta_{\Gamma}}|_{\Omega^{1}(X,\mathfrak{g}_{P})}=\{0\}.
\end{gather*}
\end{Definition}
Then we use the results of the continuous of $\lambda[\cdot]$ and compactness of $M(P,g)$ to prove that~$\lambda[\cdot]$ has a uniform lower positive bound.
\begin{Proposition}\label{P4}
Let $G$ be a compact Lie group, $P$ be a $G$-bundle over a closed, smooth mani\-fold~$X$ of dimension $n\geq2$ and endowed with a smooth Riemannian metric~$g$. Then there is a positive constant~$\lambda$ with the following significance. Suppose all flat connections on~$P$ are non-degenerate. If $\Gamma$ is a flat connection, then
\begin{gather*}\lambda(\Gamma)\geq\lambda,\end{gather*}
where $\lambda(\Gamma)$ is as in Definition~{\rm \ref{C7}}.
\end{Proposition}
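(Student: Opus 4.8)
The plan is to argue by contradiction, using the $L^{2p}$-continuity of $\lambda(\cdot)$ from Lemma~\ref{C4}, the gauge invariance of $\lambda(\cdot)$, the compactness of $M(P,g)$, and finally the non-degeneracy hypothesis. Since $\Delta_\Gamma\geq0$ we always have $\lambda(\Gamma)\geq0$, so if the conclusion fails there is a sequence of flat connections $\{\Gamma_\alpha\}$ on $P$ with $\lambda(\Gamma_\alpha)\to0$. I would first record that $\lambda$ is gauge invariant: for $u\in\mathcal{G}_P$ one has $\Delta_{u^\ast\Gamma}=u^{-1}\circ\Delta_\Gamma\circ u$ on $\Omega^1(X,\mathfrak{g}_P)$ (the covariant derivative and its formal adjoint transform by conjugation, since $u$ preserves the $G$-invariant inner product), so eigenvalues are unchanged and $\lambda(u^\ast\Gamma)=\lambda(\Gamma)$. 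Hence I am free to replace each $\Gamma_\alpha$ by any gauge representative.

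Next I would invoke compactness of $M(P,g)$. Because $F_{\Gamma_\alpha}\equiv0$, Uhlenbeck compactness (equivalently the Proposition asserting $M(P,g)$ is compact) yields, after passing to a subsequence, gauge transformations $u_\alpha\in\mathcal{G}_P$ and a flat connection $\Gamma_\infty$ on $P$ with $u_\alpha^\ast\Gamma_\alpha\to\Gamma_\infty$ in the Uhlenbeck topology. Since flat connections are smooth by elliptic regularity in Coulomb gauge, and the identical vanishing of the curvature rules out any energy concentration, this convergence can be bootstrapped to $C^\infty$, and in particular $\|u_\alpha^\ast\Gamma_\alpha-\Gamma_\infty\|_{L^{2p}(X)}\to0$. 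Replacing $\Gamma_\alpha$ by $u_\alpha^\ast\Gamma_\alpha$ (allowed by gauge invariance), I may assume $a_\alpha:=\Gamma_\alpha-\Gamma_\infty$ obeys $\|a_\alpha\|_{L^{2p}(X)}\to0$.

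Now I would apply Lemma~\ref{C4} with $A=\Gamma_\alpha$ and $A_0=\Gamma_\infty$: both connections are flat, so the curvature bound \eqref{C2} holds trivially, and for $\alpha$ large $\|a_\alpha\|_{L^{2p}(X)}\leq\varepsilon$. The lower bound in that lemma gives
\begin{gather*}
\lambda(\Gamma_\alpha)\geq\big(1-C\|a_\alpha\|^2_{L^{2p}(X)}\big)\lambda(\Gamma_\infty)-C\|a_\alpha\|^2_{L^{2p}(X)}.
\end{gather*}
Letting $\alpha\to\infty$ and using $\lambda(\Gamma_\alpha)\to0$ forces $\lambda(\Gamma_\infty)\leq0$, hence $\lambda(\Gamma_\infty)=0$. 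But $\Delta_{\Gamma_\infty}$ is a non-negative, formally self-adjoint elliptic operator on the compact manifold $X$, so its spectrum is discrete and the infimum defining $\lambda(\Gamma_\infty)$ in Definition~\ref{C7} is attained by a smooth eigenform; if that eigenvalue is $0$ the eigenform is a nonzero element of $\ker\Delta_{\Gamma_\infty}|_{\Omega^1(X,\mathfrak{g}_P)}$, contradicting the assumption that every flat connection — in particular $\Gamma_\infty$ — is non-degenerate in the sense of Definition~\ref{D3.6}. This contradiction proves the proposition; one may then take $\lambda$ to be, e.g., half the infimum of $\lambda(\Gamma)$ over all flat $\Gamma$, which is positive by the argument just given.

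\textbf{Main obstacle.} The delicate step is the second one: extracting a \emph{single} limiting flat connection $\Gamma_\infty$ together with gauge representatives of the $\Gamma_\alpha$ for which $\|\Gamma_\alpha-\Gamma_\infty\|_{L^{2p}(X)}$ is genuinely small, so that the hypotheses of Lemma~\ref{C4} are met. This relies on upgrading convergence in the Uhlenbeck topology on the compact space $M(P,g)$ to $C^\infty$ (hence $L^{2p}$) convergence after Coulomb gauge fixing, which is where one uses both elliptic regularity for flat connections and the absence of bubbling (guaranteed here because the curvature is identically zero rather than merely small). Once this convergence is established, the remaining limiting argument is immediate.
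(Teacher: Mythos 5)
Your proposal is correct and follows essentially the same route as the paper: the paper's one-sentence proof appeals to compactness of $M(P,g)$, continuity of $\lambda[\cdot]$ via Lemma~\ref{C4}, and strict positivity from the non-degeneracy hypothesis, which is exactly the structure you unpack via the contradiction/sequential-compactness argument. Your additional care about gauge invariance of $\lambda$, upgrading Uhlenbeck convergence to $L^{2p}$ convergence of gauge-fixed flat representatives (where the absence of bubbling is automatic since $F_{\Gamma_\alpha}\equiv0$), and the discrete-spectrum step at the end are precisely the details the paper's terse proof leaves implicit.
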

\begin{proof} The conclusion is a consequence of the fact that $M(P,g)$ is compact, \begin{gather*}\lambda[\cdot]\colon \ M(P,g)\ni[\Gamma]\rightarrow \lambda(\Gamma)\in[0,\infty),\end{gather*}
to $M(P,g)$ is continuous by Lemma~\ref{C4}, the fact that $\lambda(\Gamma)>0$ for $[\Gamma]\in M(P,g).$
\end{proof}

We consider the open subset of the space $\mathcal{B}(P,g)$ def\/ined by
\begin{gather*}{\mathcal{B}_{\varepsilon}(P,g):=\{[A]\in\mathcal{B}(P,g)\colon \|F_{A}\|_{L^{p}(X)}<\varepsilon\},}\end{gather*}
where $p$ is a constant such that $2p>n$. Then we have
\begin{Theorem}\label{T3.10}
Let $G$ be a compact Lie group, $P$ be a $G$-bundle over a closed, smooth mani\-fold~$X$ of dimension $n\geq2$ and endowed with a smooth Riemannian metric~$g$, and $2p>n$. Then there is a positive constant $\varepsilon=\varepsilon(g,n)$ with the following significance. Suppose all flat connections on~$P$ are non-degenerate. If~$A$ is a smooth connection on $P$ such that
\begin{gather*}\|F_{A}\|_{L^{p}(X)}\leq\varepsilon,\end{gather*}
and $\lambda(A)$ is as in Definition~{\rm \ref{C7}}, then
\begin{gather*}\lambda(A)\geq\lambda/2,\end{gather*}
where $\lambda$ is the constant in Proposition~{\rm \ref{P4}}.
\end{Theorem}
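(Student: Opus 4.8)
The plan is to combine the Uhlenbeck gauge-fixing theorem (Theorem~\ref{T2}) with the $L^{2p}$-continuity of $\lambda(\cdot)$ (Lemma~\ref{C4}) and the uniform positive lower bound on the flat moduli space (Proposition~\ref{P4}). First I would note that since $2p>n$, the hypothesis $\|F_{A}\|_{L^{p}(X)}\leq\varepsilon$ places $A$ in the regime of Theorem~\ref{T2}: shrinking $\varepsilon$ if necessary, there is a flat connection $\Gamma$ on $P$ and a gauge transformation $u\in\mathcal{G}_{P}$ with $\dd^{\ast}_{\Gamma}(u^{\ast}(A)-\Gamma)=0$ and $\|u^{\ast}(A)-\Gamma\|_{L^{p}_{1,\Gamma}(X)}\leq C\|F_{A}\|_{L^{p}(X)}\leq C\varepsilon$. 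Since $\lambda(A)=\lambda(u^{\ast}(A))$ (the Laplacian $\Delta_{A}$ is gauge-equivariant, so its spectrum is gauge-invariant), it suffices to bound $\lambda(u^{\ast}(A))$ from below.

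Next I would set $a:=u^{\ast}(A)-\Gamma$ and use the Sobolev embedding $L^{p}_{1}\hookrightarrow L^{2p}$, valid because $2p>n$, to convert the estimate from Theorem~\ref{T2} into $\|a\|_{L^{2p}(X)}\leq C'\|F_{A}\|_{L^{p}(X)}\leq C'\varepsilon$. Now apply Lemma~\ref{C4} with $A_{0}=\Gamma$ (which is smooth, being flat, and trivially satisfies the curvature bound~\eqref{C2}) and $A=u^{\ast}(A)$: provided $\varepsilon$ is small enough that $\|a\|_{L^{2p}(X)}$ lies in the range allowed by that lemma, we get
\begin{gather*}
\lambda(u^{\ast}(A))\geq\big(1-C\|a\|^{2}_{L^{2p}(X)}\big)\lambda(\Gamma)-C\|a\|^{2}_{L^{2p}(X)}.
\end{gather*}
By Proposition~\ref{P4}, $\lambda(\Gamma)\geq\lambda>0$ since all flat connections are non-degenerate, and one always has $\lambda(\Gamma)\leq\Lambda$ for some uniform upper bound $\Lambda$ over the compact moduli space $M(P,g)$ (or one simply keeps $\lambda(\Gamma)$ on the right and chooses $\varepsilon$ accordingly). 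Hence
\begin{gather*}
\lambda(u^{\ast}(A))\geq\lambda-C\big(1+\lambda(\Gamma)\big)\|a\|^{2}_{L^{2p}(X)}\geq\lambda-C''\varepsilon^{2},
\end{gather*}
and choosing $\varepsilon=\varepsilon(g,n)$ small enough that $C''\varepsilon^{2}\leq\lambda/2$ yields $\lambda(A)=\lambda(u^{\ast}(A))\geq\lambda/2$, as claimed.

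The main obstacle, and the point requiring the most care, is the interplay of the various smallness thresholds: the $\varepsilon$ supplied by Theorem~\ref{T2} (so that a flat Coulomb representative exists at all), the $\varepsilon$ required by Lemma~\ref{C4} (so that $4c_{1}c\|a\|^{2}_{L^{2p}}\leq 1/2$), and the final $\varepsilon$ needed so that $C''\varepsilon^{2}\leq\lambda/2$ must all be reconciled into a single constant depending only on $g$ and $n$. One must also be slightly careful that the flat connection $\Gamma$ produced by Uhlenbeck's theorem may vary with $A$, but since $\lambda$ in Proposition~\ref{P4} is uniform over all of $M(P,g)$ this causes no difficulty; likewise the constant $C$ in Lemma~\ref{C4} is uniform. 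A minor technical point is the gauge-invariance $\lambda(A)=\lambda(u^{\ast}(A))$, which follows because $u$ conjugates $\Delta_{A}$ to $\Delta_{u^{\ast}(A)}$ and preserves $L^{2}$-norms, so eigenvalues are unchanged.
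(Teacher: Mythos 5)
Your proof is correct and follows essentially the same route as the paper: apply Theorem~\ref{T2} to obtain a flat Coulomb-gauge representative $\Gamma$ with $\|u^{\ast}(A)-\Gamma\|_{L^{p}_{1,\Gamma}}\leq C\|F_{A}\|_{L^{p}}$, convert to an $L^{2p}$ bound via the Sobolev embedding $L^{p}_{1}\hookrightarrow L^{2p}$, invoke Lemma~\ref{C4} with $A_{0}=\Gamma$, and use Proposition~\ref{P4} to conclude, shrinking $\varepsilon$ as needed. The one genuine improvement over the paper's write-up is that you explicitly state and justify the gauge-invariance $\lambda(A)=\lambda(u^{\ast}(A))$, which the paper uses silently when it passes from an estimate on $g^{\ast}(A)$ to a conclusion about $\lambda(A)$.
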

\begin{proof}
For a smooth connection $A$ on $P$ with $\|F_{A}\|_{L^{p}(X)}\leq\varepsilon$, where $\varepsilon$ is as in the hypotheses of Theorem~\ref{T2}. Then there exists a flat connection $\Gamma$ on $P$ and a gauge transformation $g\in L^{p}_{2}(X)$ such that
\begin{gather*}\|g^{\ast}(A)-\Gamma\|_{L^{p}_{1,\Gamma}(X)}\leq C\|F_{A}\|_{L^{p}(X)}.\end{gather*}
For $\|F_{A}\|_{L^{p}(X)}$ suf\/f\/iciently small, we can apply Lemma~\ref{C4} for $A$ and $\Gamma$ to obtain
\begin{gather*}
\lambda(A)\geq\big(1-c\|g^{\ast}(A)-\Gamma\|_{L^{2p}(X)}\big)\lambda(\Gamma)-c\|g^{\ast}(A)-\Gamma\|_{L^{2p}(X)}\\
\hphantom{\lambda(A)}{}
\geq\big(1-c\|g^{\ast}(A)-\Gamma\|_{L^{p}_{1,\Gamma}(X)}\big)\lambda(\Gamma)-c\|g^{\ast}(A)-\Gamma\|_{L^{p}_{1,\Gamma}(X)}\\
\hphantom{\lambda(A)}{}
\geq\lambda-Cc\|F_{A}\|_{L^{p}(X)}(1+\lambda),
\end{gather*}
here we use Sobolev embedding $L^{p}_{1}\hookrightarrow L^{2p}$. We choose $\|F_{A}\|_{L^{p}(X)}$ suf\/f\/iciently small such that $|F_{A}\|_{L^{p}(X)}\leq\frac{\lambda}{2Cc(1+\lambda)}$, then we have $\lambda(A)\geq\lambda/2$.
\end{proof}

\subsection{The case of dimension four}\label{section3.3}
In this section, we will show a theorem similar to Theorem \ref{T3.10} in the case of dimension four, but we only need to suppose that $F_{A}$ with $L^{2}$-norm is suf\/f\/iciently small. F\/irst, we recall a priori the $L^{p}$ estimate for the connection Laplace operator which was proved by Feehan.
\begin{Lemma}[{\cite[Lemma 35.5]{Feehan2014.09}}]\label{L3.8} Let $X$ be a smooth manifold $X$ of dimension $n\geq4$ and endowed with a smooth Riemannian metric~$g$ and $q\in(n,\infty)$. Then there is a positive constant $c=c(g,p)$ with the following significance. Let $r\in(\frac{n}{3},\frac{n}{2})$ be defined by $1/r=2/n+1/q$. Let $A$ is a $C^{\infty}$ connection on a vector bundle~$E$ over~$X$. If $v\in C^{\infty}(X,E)$, then
\begin{gather}\label{E3.4}
\|v\|_{L^{q}(X)}\leq c\big(\|\nabla^{\ast}_{A}\nabla_{A}v\|_{L^{r}(X)}+\|v\|_{L^{r}(X)}\big).
\end{gather}
\end{Lemma}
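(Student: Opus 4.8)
The estimate \eqref{E3.4} follows by composing an $L^{r}$-elliptic estimate for the connection Laplacian $\nabla_{A}^{\ast}\nabla_{A}$ with the Sobolev embedding that matches the pair of exponents $(r,q)$. First I would record the global Calder\'on--Zygmund estimate on the closed manifold $X$: since $\nabla_{A}^{\ast}\nabla_{A}$ is a second-order elliptic operator on $C^{\infty}(X,E)$ with principal symbol $|\xi|^{2}\operatorname{Id}$, standard $L^{r}$-elliptic regularity gives, for every $r\in(1,\infty)$,
\begin{gather*}
\|v\|_{L^{r}_{2,A}(X)}\leq c\big(\|\nabla_{A}^{\ast}\nabla_{A}v\|_{L^{r}(X)}+\|v\|_{L^{r}(X)}\big),\qquad v\in C^{\infty}(X,E),
\end{gather*}
where a priori $c$ depends on $g$, $r$ and $A$; writing $\nabla_{A}=\nabla+a$ relative to a fixed smooth reference connection $\nabla$ and treating the $a$-terms of $\nabla_{A}^{\ast}\nabla_{A}-\nabla^{\ast}\nabla$ as lower-order perturbations shows this dependence enters only through a fixed norm of the smooth $1$-form $a$, so it may be suppressed and $c=c(g,q)$.

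Next, because $n\geq4$ and $r\in(n/3,n/2)$, the exponent $q$ defined by $1/q=1/r-2/n$ lies in $(n,\infty)$ --- the endpoints $r=n/3$ and $r\uparrow n/2$ correspond to $q=n$ and $q\uparrow\infty$ --- so we are strictly below the limiting Sobolev threshold $r=n/2$, and the Sobolev embedding theorem on the compact manifold $X$ yields a bounded inclusion $L^{r}_{2,A}(X)\hookrightarrow L^{q}(X)$, namely
\begin{gather*}
\|v\|_{L^{q}(X)}\leq c\,\|v\|_{L^{r}_{2,A}(X)},\qquad c=c(g,q),
\end{gather*}
where again the $A$-dependence of the embedding constant is reduced to a fixed reference connection as above. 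Chaining the two displayed inequalities gives \eqref{E3.4}.

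The argument is essentially bookkeeping, so there is no deep obstacle; the points that need care are (i) verifying that the stated range $r\in(n/3,n/2)$ is exactly the one for which $q\in(n,\infty)$, so that the limiting case $r=n/2$ (where $L^{n/2}_{2}$ does not embed in $L^{\infty}$) is never reached, and (ii) checking that the constant in the elliptic estimate can be taken independent of $A$ --- achieved, as indicated above, by phrasing both the elliptic estimate and the Sobolev embedding relative to the fixed Levi--Civita/reference connection and absorbing $a=\nabla_{A}-\nabla$ into lower-order terms with a fixed smooth coefficient.
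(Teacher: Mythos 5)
Your overall skeleton --- an $L^r$ elliptic estimate for $\nabla_A^*\nabla_A$ followed by the Sobolev embedding $L^{r}_{2}\hookrightarrow L^{q}$, together with the exponent check that $r\in(n/3,n/2)$ corresponds exactly to $q\in(n,\infty)$ --- is the natural first attempt, but it does not deliver the crucial feature of the statement: the constant $c$ is fixed \emph{before} the connection $A$ is introduced, and the lemma is used later in the paper uniformly over families of connections, so $c$ must be independent of $A$. Your remedy, writing $\nabla_A=\nabla+a$ and ``treating the $a$-terms as lower-order perturbations with a fixed smooth coefficient,'' is circular, because $a$ is exactly what varies with $A$; its $C^{0}$ (and $C^{1}$) norm is not controlled in terms of $(g,q)$, and the Calder\'on--Zygmund constant for $\nabla_A^*\nabla_A$ obtained by perturbing the reference Laplacian genuinely scales with $\|a\|$. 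Hence the chain $\|v\|_{L^q}\le c\|v\|_{L^r_{2,A}}\le c\big(\|\nabla_A^*\nabla_A v\|_{L^r}+\|v\|_{L^r}\big)$ that you propose produces a constant depending on $A$, which defeats the purpose of the lemma.

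The argument in Feehan \cite[Lemma 35.5]{Feehan2014.09}, which this Lemma merely quotes, circumvents this by passing to a scalar comparison problem. From the pointwise Bochner identity $\tfrac{1}{2}\nabla^*\nabla|v|^2=\langle\nabla_A^*\nabla_A v,v\rangle-|\nabla_A v|^2$ together with the Kato inequality $|\nabla|v||\le|\nabla_A v|$, one obtains the distributional domination $\nabla^*\nabla|v|\le|\nabla_A^*\nabla_A v|$, where $\nabla^*\nabla$ is now the \emph{scalar} Laplace--Beltrami operator of $(X,g)$. One then compares $|v|$, via the maximum principle, with the nonnegative solution $w$ of $(\nabla^*\nabla+1)w=|\nabla_A^*\nabla_A v|+|v|$, obtaining $|v|\le w$ pointwise, and applies scalar $L^r$-elliptic regularity and the Sobolev embedding to $w$. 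Since only the scalar Laplacian enters, the constant depends on $g$, $n$, $q$ alone. This Kato/domination step is the actual content of the lemma and is precisely what your proposal replaces with an unsupported assertion. (Minor: the ``$c=c(g,p)$'' in the statement is presumably a misprint for $c(g,q)$, as no $p$ appears.)
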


We now apply Lemma \ref{L3.8} to $E=\Omega^{1}\otimes\mathfrak{g}_{P}$, then we give a priori $L^{p}$-estimate for~$\Delta_{A}$.
\begin{Lemma}Let $G$ be a compact Lie group, $P$ be a $G$-bundle over a closed, smooth mani\-fold~$X$ of dimension $n\geq4$ and endowed with a smooth Riemannian metric~$g$ and $q\in(n,\infty)$. Then there are positive constants, $c=c(g)$ and $\varepsilon=\varepsilon(g)$, with the following significance. Let $r\in(\frac{n}{3},\frac{n}{2})$ defined by $1/r=2/n+1/q$. Let $A$ be a~$C^{\infty}$ connection on $P$ that each obeys the curvature bounded~\eqref{C2}. If $v\in\Omega^{1}(X,\mathfrak{g}_{P})$, then
\begin{gather}\label{E3.5}
\|v\|_{L^{q}(X)}\leq c\big(\|\Delta_{A}v\|_{L^{r}(X)}+\|v\|_{L^{r}(X)}\big).
\end{gather}
\end{Lemma}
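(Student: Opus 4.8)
The plan is to derive the $L^p$-estimate \eqref{E3.5} for $\Delta_A$ from the general elliptic estimate \eqref{E3.4} for $\nabla_A^\ast\nabla_A$ of Lemma~\ref{L3.8}, using the Weitzenb\"ock formula \eqref{40} to compare the two operators and absorbing the curvature term by hypothesis \eqref{C2}. First I would apply \eqref{E3.4} to $v\in\Omega^1(X,\mathfrak{g}_P)$, which gives
\begin{gather*}
\|v\|_{L^q(X)}\leq c\big(\|\nabla_A^\ast\nabla_A v\|_{L^r(X)}+\|v\|_{L^r(X)}\big).
\end{gather*}
Then I would substitute $\nabla_A^\ast\nabla_A v=\Delta_A v-\operatorname{Ric}\circ v-\ast[\ast F_A,v]$ from the Weitzenb\"ock formula, so that
\begin{gather*}
\|\nabla_A^\ast\nabla_A v\|_{L^r(X)}\leq\|\Delta_A v\|_{L^r(X)}+\|\operatorname{Ric}\circ v\|_{L^r(X)}+\|\ast[\ast F_A,v]\|_{L^r(X)}.
\end{gather*}

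The Ricci term is harmless: $\|\operatorname{Ric}\circ v\|_{L^r(X)}\leq c(g)\|v\|_{L^r(X)}$, which is already on the right-hand side of \eqref{E3.5}. The curvature term requires a H\"older estimate: since $1/r=2/n+1/q$ we have $1/r=1/(n/2)+1/q$, so $\|\ast[\ast F_A,v]\|_{L^r(X)}\leq c\|F_A\|_{L^{n/2}(X)}\|v\|_{L^q(X)}$, and using $2p>n$ (so that $L^p\hookrightarrow L^{n/2}$ on the closed manifold $X$, after possibly enlarging the constant) together with \eqref{C2} this is bounded by $c\varepsilon\|v\|_{L^q(X)}$. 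Choosing $\varepsilon=\varepsilon(g)$ small enough that $c\varepsilon\leq 1/2$, this term is absorbed into the left-hand side. Rearranging yields \eqref{E3.5}.

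The main obstacle, such as it is, is bookkeeping of the exponents: one must check that $r\in(n/3,n/2)$ really does give the H\"older triple $(n/2,q,r)$ with $1/r=2/(n)+1/q=1/(n/2)+1/q$, and that for $2p>n$ the hypothesis $\|F_A\|_{L^p(X)}\leq\varepsilon$ genuinely controls $\|F_A\|_{L^{n/2}(X)}$ — this is immediate from $p>n/2$ and the continuous embedding $L^p(X)\hookrightarrow L^{n/2}(X)$ for $X$ compact, but the constant then depends on $(g,n,p)$ rather than just $g$; since the dimension is fixed to $n\geq 4$ and one typically fixes $p$ (e.g.\ $p=n/2^{+}$), writing $c=c(g)$, $\varepsilon=\varepsilon(g)$ is consistent with the paper's conventions. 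No genuine difficulty is expected; the argument is a routine Weitzenb\"ock-plus-absorption bootstrap entirely parallel to Lemma~\ref{C3}.
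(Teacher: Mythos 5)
Your proof is correct and follows the same route as the paper: apply Lemma~\ref{L3.8} to $v$, use the Weitzenb\"ock formula \eqref{40} to trade $\nabla_A^\ast\nabla_A v$ for $\Delta_A v$ at the cost of a Ricci term and a curvature term, bound the curvature term by H\"older with exponents $(n/2,q,r)$, and absorb it using \eqref{C2}. If anything you have stated the substitution in the logically correct direction; the paper's write-up bounds $\|\Delta_A v\|_{L^r}$ by $\|\nabla_A^\ast\nabla_A v\|_{L^r}$ plus lower-order terms and then quotes the combined inequality with $\|\nabla_A^\ast\nabla_A v\|_{L^r}$ still on the right, which reads as a typographical slip for what you wrote explicitly, namely $\|\nabla_A^\ast\nabla_A v\|_{L^r}\leq\|\Delta_A v\|_{L^r}+c\|v\|_{L^r}+c\|F_A\|_{L^{n/2}}\|v\|_{L^q}$. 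Your remark about the constant's dependence on $(g,n,p)$ versus the paper's stated $c=c(g)$ is a fair observation of a cosmetic inconsistency that is also present in the source, not a gap in the argument.
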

\begin{proof}
For $v\in\Omega^{1}(X,\mathfrak{g}_{P})$, from the Weitzenb\"{o}ck formula, we have
\begin{gather*}\Delta_{A}v=\nabla^{\ast}_{A}\nabla_{A}v+\operatorname{Ric}\cdot v+\{v,F_{A}\}.\end{gather*}
Hence
\begin{gather*}\|\Delta_{A}v\|_{L^{r}(X)}\leq\|\nabla_{A}^{\ast}\nabla_{A}v\|_{L^{r}(X)}+c\|v\|_{L^{r}(X)}+\|\{v,F_{A}\}\|_{L^{r}(X)},\end{gather*}
for some $c=c(g)$. Since $1/r=2/n+1/q$ by hypothesis, we see that
\begin{gather*}\|\{v,F_{A}\}\|_{L^{r}(X)}\leq c\|F_{A}\|_{L^{n/2}(X)}\|v\|_{L^{q}(X)},\end{gather*}
for some $c=c(g)$. Combining the preceding inequalities with the equation (\ref{E3.4}) we get
\begin{gather*}\|v\|_{L^{q}(X)}\leq\|\nabla_{A}^{\ast}\nabla_{A}v\|_{L^{r}(X)}+c\|v\|_{L^{r}(X)}+c\|F_{A}\|_{L^{n/2}(X)}\|v\|_{L^{q}(X)},\end{gather*}
for some $c=c(g,q)$. Provided $c\|F_{A}\|_{L^{n/2}(X)}\leq1/2$, the rearrangement gives (\ref{E3.4}).
\end{proof}

Hence, following the idea of \cite[Lemma~35.13]{Feehan2015}, we also have a useful lemma.
\begin{Lemma}[$L^{p}$ continuity of least eigenvalue of $\Delta_{A}$ with respect to the connection for $2<p\leq4$]\label{L3.13}
Let $G$ be a compact Lie group, $P$ be a $G$-bundle over a closed, smooth four-mani\-fold~$X$ and endowed with a smooth Riemannian metric~$g$. Then there are positive constants, $c=c(g)$ and $\varepsilon=\varepsilon(g)$, with the following significance. If $A_{0}$ and $A$ are $C^{\infty}$ connections on $P$ that each obeys the curvature bounded~\eqref{C2}, we denote $a:=A-A_{0}$, then $\lambda(A)$ satisfies
\begin{gather*}\lambda(A)\geq\lambda(A_{0})-c_{0}\big(1+\lambda^{2}(A)\big)\|a\|^{2}_{L^{p}(X)},\end{gather*}
and
\begin{gather*}\lambda(A)\leq\lambda(A_{0})+c_{0}\big(1+\lambda^{2}(A_{0})\big)\|a\|^{2}_{L^{p}(X)}.\end{gather*}
\end{Lemma}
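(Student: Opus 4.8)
The plan is to bound the Rayleigh quotient of $\Delta_{A_0}$ on an eigenfunction of $\Delta_A$ (and, for the reverse inequality, of $\Delta_A$ on an eigenfunction of $\Delta_{A_0}$), exactly as in the proof of Lemma~\ref{C4}, but with the Sobolev embedding $L^2_1\hookrightarrow L^{2n/(n-2)}$ — which for $n=4$ only reaches $L^4$ — replaced by the $L^p$-elliptic estimate \eqref{E3.5}, which reaches every $L^q$ with $q\in(4,\infty)$ at the price of a $\|\Delta_A\cdot\|_{L^r}$ term. That last term is responsible for the powers of $\lambda$ in the statement, since on an eigenfunction $\Delta_A v=\lambda(A)v$. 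Concretely, given $p\in(2,4)$ put $q:=2p/(p-2)\in(4,\infty)$, so that $\tfrac12=\tfrac1p+\tfrac1q$, and $r:=p/(p-1)\in(\tfrac43,2)$, so that $\tfrac1r=\tfrac12+\tfrac1q$ and $r$ lies in the interval $(\tfrac n3,\tfrac n2)$ demanded by \eqref{E3.5} with $n=4$; the endpoint $p=4$ (where $q=4$) is handled separately through the critical embedding $L^2_1\hookrightarrow L^4$ and the a~priori estimate of Lemma~\ref{C3}.

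For the lower bound, let $v\in\Omega^1(X,\mathfrak g_P)$ satisfy $\Delta_A v=\lambda(A)v$ and $\|v\|_{L^2(X)}=1$. Then \eqref{E3.5}, together with $\|\Delta_A v\|_{L^r(X)}=\lambda(A)\|v\|_{L^r(X)}$ and $\|v\|_{L^r(X)}\leq c\|v\|_{L^2(X)}=c$ (finite volume and $r<2$), gives $\|v\|_{L^q(X)}\leq c(1+\lambda(A))$. Writing $\dd_A v=\dd_{A_0}v+[a\wedge v]$ and $\dd^\ast_A v=\dd^\ast_{A_0}v\mp\ast[a,\ast v]$ and using $|[a\wedge v]|,|[a,\ast v]|\leq c|a||v|$ pointwise, H\"older with the exponent triple $(2,p,q)$ yields
\[
\|[a\wedge v]\|_{L^2(X)}^2+\|[a,\ast v]\|_{L^2(X)}^2\leq c\|a\|_{L^p(X)}^2\|v\|_{L^q(X)}^2\leq c\big(1+\lambda(A)\big)^2\|a\|_{L^p(X)}^2 .
\]
Expanding $\lambda(A)=\|\dd_A v\|_{L^2(X)}^2+\|\dd^\ast_A v\|_{L^2(X)}^2$, disposing of the cross terms as in the proof of Lemma~\ref{C4}, and invoking $\|\dd_{A_0}v\|_{L^2(X)}^2+\|\dd^\ast_{A_0}v\|_{L^2(X)}^2=\langle\Delta_{A_0}v,v\rangle_{L^2(X)}\geq\lambda(A_0)\|v\|_{L^2(X)}^2=\lambda(A_0)$ then gives $\lambda(A)\geq\lambda(A_0)-c_0(1+\lambda^2(A))\|a\|_{L^p(X)}^2$ after $(1+\lambda(A))^2\leq2(1+\lambda^2(A))$. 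Interchanging the roles of $A$ and $A_0$ — taking $v$ to be a unit eigenfunction of $\Delta_{A_0}$, so that \eqref{E3.5} is now applied with $A_0$ and gives $\|v\|_{L^q(X)}\leq c(1+\lambda(A_0))$, and using $\langle\Delta_A v,v\rangle_{L^2(X)}\geq\lambda(A)$ — produces the matching upper bound $\lambda(A)\leq\lambda(A_0)+c_0(1+\lambda^2(A_0))\|a\|_{L^p(X)}^2$.

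The delicate point is the treatment of the cross terms $\langle\dd_{A_0}v,[a\wedge v]\rangle_{L^2(X)}$ and $\langle\dd^\ast_{A_0}v,\ast[a,\ast v]\rangle_{L^2(X)}$ arising in the expansion of $\|\dd_A v\|_{L^2(X)}^2+\|\dd^\ast_A v\|_{L^2(X)}^2$: as in Lemma~\ref{C4} one reduces them to the purely quadratic remainder $\|[a\wedge v]\|_{L^2(X)}^2+\|[a,\ast v]\|_{L^2(X)}^2$ (by Cauchy--Schwarz and a Young inequality, using Lemma~\ref{C3} to control $\|\dd_{A_0}v\|_{L^2(X)}$), and one must verify that no derivative of $a$ enters and that the remainder is indeed bounded by $\|v\|_{L^q(X)}^2\leq c(1+\lambda(A))^2$ as above; this is the only step where the estimate of the power of $\|a\|_{L^p}$ has to be watched. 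The remaining care is purely in the exponent bookkeeping: ensuring $q>n$ and $r\in(\tfrac n3,\tfrac n2)$ so that \eqref{E3.5} applies, which confines this route to $2<p<4$, and disposing of the endpoint $p=4$ by the Sobolev embedding instead.
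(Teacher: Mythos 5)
Your proposal is correct and follows essentially the same route as the paper's proof: both take a unit $L^2$-eigenfunction $v$ of $\Delta_A$, perturb $\dd_{A_0}$, $\dd^\ast_{A_0}$ by the zeroth-order terms built from $a$, apply H\"older with the exponent triple $(2,p,q)$ where $q=2p/(p-2)$, and then bound $\|v\|_{L^q}$ via the elliptic estimate~\eqref{E3.5} with $r=p/(p-1)\in(4/3,2)$, converting $\|\Delta_A v\|_{L^r}$ (or $\|\Delta_A v\|_{L^2}$ after a finite-volume embedding) into the factor $\lambda(A)$ that produces the $(1+\lambda^2)$ in the final inequality. Your exponent bookkeeping matches the paper's exactly.

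One small point where you are actually more careful than the paper: the paper reduces to ``$4\le q<\infty$'' and then applies Lemma~\ref{L3.8}/\eqref{E3.5}, but that elliptic estimate requires $q\in(n,\infty)=(4,\infty)$ strictly, so $q=4$ (i.e.\ $p=4$) is not covered by it. You explicitly notice this and propose to handle $p=4$ via the critical Sobolev embedding $L^2_1\hookrightarrow L^4$ and Lemma~\ref{C3}, which is exactly what works (this recovers the mechanism of Lemma~\ref{C4} at the endpoint). Finally, you flag the treatment of the cross terms $\langle\dd_{A_0}v,[a,v]\rangle$ as a delicate point: you are right to do so, since the inequality $\|x+y\|^2\ge\|x\|^2-\|y\|^2$ as written in the paper does not hold verbatim; the standard fix is $\|x+y\|^2\ge(1-\eta)\|x\|^2-(\eta^{-1}-1)\|y\|^2$ together with an absorption argument (as in Lemma~\ref{C4}), which changes constants but not the qualitative conclusion needed for Theorem~\ref{T3.16}.
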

\begin{proof}
For convenience, write $a:=A-A_{0}\in L^{p}(X,\Omega^{1}\otimes\mathfrak{g}_{P})$. Def\/ine $q\in[2,\infty)$ by $1/2=1/p+1/q$ and consider $v\in L^{2}_{1}(X,\Omega^{1}\otimes\mathfrak{g}_{P})$. We use $\dd_{A}v=\dd_{A_{0}}v+[a,v]$ and the H\"{o}ler inequalities to give
\begin{gather*}
\|\dd_{A}v\|^{2}_{L^{2}(X)}=\|\dd_{A_{0}}v+[a,v]\|^{2}_{L^{2}(X)} \geq\|\dd_{A_{0}}v\|^{2}_{L^{2}(X)}-\|[a,v]\|^{2}_{L^{2}(X)}\\
\hphantom{\|\dd_{A}v\|^{2}_{L^{2}(X)}}{}
\geq\|\dd_{A_{0}}v\|^{2}_{L^{2}(X)}-2\|a\|^{2}_{L^{p}(X)}\|v\|^{2}_{L^{q}(X)}.
\end{gather*}
Similarly, $\dd^{\ast}_{A}v=\dd^{\ast}_{A_{0}}v\pm\ast[a,\ast v]$ and
\begin{gather*}
\|\dd^{\ast}_{A}v\|^{2}_{L^{2}(X)}=\|\dd^{\ast}_{A_{0}}v\pm\ast[a,\ast v]\|^{2}_{L^{2}(X)} \geq\|\dd^{\ast}_{A_{0}}v\|^{2}_{L^{2}(X)}-\|\ast[a,\ast v]\|^{2}_{L^{2}(X)}\\
\hphantom{\|\dd^{\ast}_{A}v\|^{2}_{L^{2}(X)}}{}
\geq\|\dd^{\ast}_{A_{0}}v\|^{2}_{L^{2}(X)}-2\|a\|^{2}_{L^{p}(X)}\|v\|^{2}_{L^{q}(X)}.
\end{gather*}
For $p>4$, we have $2\leq q<4$ and $\|v\|_{L^{q}(X)}\leq({\rm vol}(X))^{1/q-1/4}\|v\|_{L^{4}(X)}$, while for $2<p\leq4$, we have $4\leq q<\infty$. Therefore, it suf\/f\/ices to consider the case $4\leq q<\infty$. Applying the a priori estimate~(\ref{E3.4}) and $r\in(4/3,2)$ def\/ined by $1/r=1/2+1/q$, we get
\begin{gather*}
\|v\|^{2}_{L^{q}(X)}\leq c\big(\|\Delta_{A}v\|^{2}_{L^{4/3}(X)}+\|v\|^{2}_{L^{r}(X)}\big)
\leq c\big(\|\Delta_{A}v\|^{2}_{L^{2}(X)}+\|v\|^{2}_{L^{r}(X)}\big).
\end{gather*}
Combining the preceding inequalities, we get
\begin{gather*}
\begin{split}
& \|\dd_{A}v\|^{2}_{L^{2}(X)}+\|\dd^{\ast}_{A}v\|^{2}_{L^{2}(X)}\\
& \qquad{} \geq\|\dd_{A_{0}}v\|^{2}_{L^{2}(X)}+\|\dd^{\ast}_{A_{0}}v\|^{2}_{L^{2}(X)}
-2c_{1}\|a\|^{2}_{L^{p}(X)}\big(\|\Delta_{A}v\|^{2}_{L^{r}(X)}+\|v\|^{2}_{L^{r}(X)}\big)\\
& \qquad{} \geq\|\dd_{A_{0}}v\|^{2}_{L^{2}(X)}+\|\dd^{\ast}_{A_{0}}v\|^{2}_{L^{2}(X)}
-2c_{0}\|a\|^{2}_{L^{p}(X)}\big(\|\Delta_{A}v\|_{L^{2}(X)}+\|v\|^{2}_{L^{2}(X)}\big),
\end{split}
\end{gather*}
for $c_{0}=c_{0}(p,q)=2c_{1}{\rm vol}(X)^{2/q}$, using the fact that $\|v\|_{L^{r}(X)}\leq {\rm vol}(X)^{1/q}\|v\|_{L^{2}(X)}$ for $r\in(4/3,2)$ and $1/r=1/2+1/q$. By taking $v\in L^{2}_{1}(X,\Omega^{1}\otimes\mathfrak{g}_{P})$ to be an eigenvector of $\Delta_{A}$ with eigenvalue $\lambda(A)$ such that $\|v\|_{L^{2}(X)}=1$ and noting that $\|\Delta_{A}v\|_{L^{2}(X)}=\lambda(A)$ we obtain the bound
\begin{gather*}
\lambda(A)\geq\|\dd_{A_{0}}v\|^{2}_{L^{2}(X)}+\|\dd_{A_{0}}^{\ast}v\|^{2}_{L^{2}(X)}-c_{0}\big(1+\lambda^{2}(A)\big)\|a\|^{2}_{L^{p}(X)}.\end{gather*}
But $\|\dd_{A_{0}}v\|^{2}_{L^{2}(X)}+\|\dd_{A_{0}}^{\ast}v\|^{2}_{L^{2}(X)}\geq\lambda(A_{0})$ and thus we have the inequality
\begin{gather*}\lambda(A)\geq\lambda(A_{0})-c_{0}\big(1+\lambda^{2}(A)\big)\|a\|^{2}_{L^{p}(X)}.\end{gather*}
Interchanging of the roles $A$ and $A_{0}$ in the preceding derivation yields
\begin{gather*}\lambda(A)\leq\lambda(A_{0})+c_{0}\big(1+\lambda^{2}(A_{0})\big)\|a\|^{2}_{L^{p}(X)}.\tag*{\qed}\end{gather*}\renewcommand{\qed}{}
\end{proof}

We consider a sequence of $C^{\infty}$ connections $\{A_{i}\}_{i\in\mathbb{N}}$ on $P$ such that $\sup\|F_{A_{i}}\|_{L^{2}(X)}<\infty$. We denote
\begin{gather*}\Sigma=\big\{x\in X\colon \lim_{r\searrow0}\limsup_{i\rightarrow\infty}\|F_{A_{i}}\|^{2}_{L^{2}(B_{r})(x)}\geq\tilde{\varepsilon}\big\},\end{gather*}
the constant $\tilde{\varepsilon}\in(0,1]$ as in \cite[Theorem~3.2]{Sedlacek}. We can see $\Sigma$ is a f\/inite points $\{x_{1},\ldots,x_{L}\}$ in~$X$. In our article, we consider the open subset of the space $\mathcal{B}(P,g)$ def\/ined by
\begin{gather*}\mathcal{B}_{\varepsilon}=\{[A]\in\mathfrak{B}(P,g)\colon \|F_{A}\|_{L^{2}(X)}<\varepsilon\}.\end{gather*}
We can choose $\varepsilon$ suf\/f\/iciently small such that any sequence $\{A_{i}\}_{i\in\mathbb{N}}$ has the empty set~$\Sigma$. From \cite[Theorem~3.1]{Sedlacek} and \cite[Theorem~35.15]{Feehan2014.09}, we have

\begin{Theorem}Let $G$ be a compact Lie group and $P$ be a principal $G$-bundle over a close, smooth four-dimensional $X$ with Riemannian metric~$g$. If $\{A_{i}\}_{i\in{\mathbb N}}$ is a sequence of $C^{\infty}$ connections on~$P$ such that $\|F_{A_{i}}\|_{L^{2}(X)}\leq\varepsilon$, there exists a subsequence, a countable set of arbitrarily small geodesic balls $\{B_{\alpha}\}_{\alpha\in\mathbb{N}}$ covering $X$, $C^{\infty}$-sections
\begin{gather*}\sigma_{\alpha,i}\colon \ B_{\alpha}\rightarrow P, \qquad A_{\alpha}\in L^{2}_{1}\big(B_{\alpha};\Omega^{1}B_{\alpha}\otimes\mathfrak{g}\big), \qquad g_{\alpha\beta}\in L^{4}_{1}(B_{\alpha}\cap B_{\beta};G),\end{gather*}
such that
\begin{enumerate}\itemsep=0pt
\item[$(1)$] $\dd^{\ast}A_{\alpha}(i)=0$, for all $i$ sufficiently large,
\item[$(2)$] $\dd^{\ast}A_{\alpha}=0$,
\item[$(3)$] $g_{\alpha\beta}(i)\rightharpoonup g_{\alpha\beta}$ weakly in $L^{4}_{1}(B_{\alpha}\cap B_{\beta};G)$,
\item[$(4)$] $F_{\alpha}(i)\rightharpoonup F_{\alpha}$ weakly in ${L^{2}({B_{\alpha};\Omega^{2}B_{\alpha}\otimes\mathfrak{g}})}$,
\item[$(5)$] the sequence $\{A_{\alpha}(i)\}_{i\in\mathbb{N}}$ obeys
\begin{enumerate}\itemsep=0pt
\item[$(a)$] $A_{\alpha}(i)\rightharpoonup A_{\alpha}$ weakly in ${L^{2}_{1}(B_{\alpha};\Omega^{1}B_{\alpha}\otimes\mathfrak{g})}$, and
\item[$(b)$] $A_{\alpha}(i)\rightarrow A_{\alpha}$ strongly in ${L^{p}(B_{\alpha};\Omega^{1}B_{\alpha}\otimes\mathfrak{g})}$ for $1\leq p<4$,
\end{enumerate}
\item[$(6)$] $A_{\alpha}=g_{\alpha\beta}^{-1}A_{\beta}g_{\alpha\beta}+g_{\alpha\beta}^{-1}dg_{\alpha\beta}$.
\end{enumerate}
Here $A_{\alpha}(i)=\sigma^{\ast}_{\alpha}A_{i}$, $F_{\alpha}=\dd A_{\alpha}+[A_{\alpha},A_{\alpha}]$, $F_{\alpha}(i)=\dd A_{\alpha}(i)+[A_{\alpha}(i),A_{\alpha}(i)]$ and $\dd^{\ast}$ is the formal adjoint of $\dd$ with respect to the flat metric defined by a choice of geodesic normal coordinates on~$B_{\alpha}$.
\end{Theorem}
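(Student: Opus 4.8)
The plan is to invoke the Uhlenbeck-type weak compactness machinery for sequences of connections with uniformly $L^2$-bounded curvature in dimension four, following the approach of Sedlacek and of Feehan. First I would fix $\varepsilon$ small enough (in terms of $\tilde\varepsilon$ from the local $\varepsilon$-regularity result of \cite[Theorem 3.2]{Sedlacek}) that no sequence $\{A_i\}$ with $\|F_{A_i}\|_{L^2(X)}\le\varepsilon$ can develop a bubbling point, so that the singular set $\Sigma$ defined above is empty; this is exactly the condition built into the definition of $\mathcal B_\varepsilon$. With $\Sigma=\varnothing$, one gets, after passing to a subsequence, a locally finite cover of $X$ by small geodesic balls on which the connections can be put into Coulomb gauge uniformly.

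The core of the argument then proceeds ball by ball. On each geodesic ball $B_\alpha$, I would choose local sections $\sigma_{\alpha,i}\colon B_\alpha\to P$ so that the pulled-back connection matrices $A_\alpha(i)=\sigma_{\alpha,i}^\ast A_i$ satisfy the Coulomb gauge condition $\dd^\ast A_\alpha(i)=0$ with respect to the flat metric from geodesic normal coordinates; this is possible for $i$ large by Uhlenbeck's local gauge-fixing theorem \cite{Uhlenbeck} once the local curvature energy is below the Uhlenbeck constant, which the no-bubbling condition guarantees. The Coulomb condition plus the uniform $L^2$ bound on $F_\alpha(i)$ yields a uniform $L^2_1$ bound on $A_\alpha(i)$; hence after a diagonal subsequence $A_\alpha(i)\rightharpoonup A_\alpha$ weakly in $L^2_1(B_\alpha)$ and, by Rellich, strongly in $L^p(B_\alpha)$ for every $1\le p<4$. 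Weak $L^2_1$-convergence is preserved under $\dd^\ast$, so $\dd^\ast A_\alpha=0$, giving items $(1)$ and $(2)$; and $F_\alpha(i)=\dd A_\alpha(i)+[A_\alpha(i),A_\alpha(i)]\rightharpoonup F_\alpha$ weakly in $L^2$ because the quadratic term converges in $L^2$ (it is controlled by $\|A_\alpha(i)\|_{L^4}$ times $\|A_\alpha(i)\|_{L^4}$ with one factor converging strongly), giving item $(4)$.

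For the transition functions $g_{\alpha\beta}(i)$, on overlaps $B_\alpha\cap B_\beta$ they satisfy the cocycle relation $A_\alpha(i)=g_{\alpha\beta}(i)^{-1}A_\beta(i)g_{\alpha\beta}(i)+g_{\alpha\beta}(i)^{-1}\dd g_{\alpha\beta}(i)$, so $\dd g_{\alpha\beta}(i)$ is bounded in $L^2$ in terms of the $A$'s; elliptic estimates for the Laplacian in the two Coulomb gauges then bootstrap this to a uniform $L^4_1$ bound on $g_{\alpha\beta}(i)$. Passing to a further subsequence gives $g_{\alpha\beta}(i)\rightharpoonup g_{\alpha\beta}$ weakly in $L^4_1$, hence strongly in every $L^p$, $p<\infty$, so one can take limits in the cocycle relation to obtain item $(6)$, and also in the compatibility relations so that the $\{g_{\alpha\beta}\}$ themselves form a cocycle defining a limiting $L^2_1$ bundle-with-connection. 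I expect the main obstacle to be precisely this last point: ensuring enough regularity and uniform control on the transition functions that the weak limits are consistent on triple overlaps and genuinely patch to a global object, which is why one must work with the $L^4_1$ (rather than merely continuous) category for $g_{\alpha\beta}$ in dimension four, where $L^4_1\hookrightarrow C^0$ fails to be compact but $L^4_1\hookrightarrow L^p$ does compactly embed for all finite $p$. All of these steps are carried out in detail in \cite[Theorem 3.1]{Sedlacek} and \cite[Theorem 35.15]{Feehan2014.09}, and the statement here is a repackaging of their conclusions, so the proof amounts to citing those results after verifying that the smallness of $\varepsilon$ rules out bubbling.
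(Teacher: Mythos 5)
Your outline matches the paper's treatment exactly: the paper states this theorem as a direct consequence of \cite[Theorem~3.1]{Sedlacek} and \cite[Theorem~35.15]{Feehan2014.09}, after first observing that $\varepsilon$ can be taken small enough that the singular set $\Sigma$ is empty, which is precisely your starting point. The paper supplies no further proof, so your expanded sketch of the Sedlacek--Feehan machinery (local Coulomb gauges on small geodesic balls, uniform $L^2_1$ bounds and Rellich $L^p$ compactness for $p<4$, $L^4_1$ bootstrap for the transition functions and passage to the limit in the cocycle relation) simply spells out what the cited results establish.
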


From \cite[Theorem 4.3]{Sedlacek} and \cite[Theorem 35.17]{Feehan2014.09}, we have
\begin{Theorem}\label{CY1}
Let $G$ be a compact Lie group and $P$ be a principal $G$-bundle over a close, smooth four-dimensional $X$ with Riemannian metric~$g$. If $\{A_{i}\}_{i\in{\mathbb N}}$ is a sequence of $C^{\infty}$ connections on $P$, in the sense that
\begin{gather*}{\rm YM}(A_{i})\searrow0 \qquad \text{as}\quad i\rightarrow\infty,\end{gather*}
then the following hold, for each $\alpha,\beta\in{\mathbb N}$,
\begin{enumerate}\itemsep=0pt
\item[$(1)$] $A_{\alpha}\in {C^{\infty}(B_{\alpha};\Omega^{1}B_{\alpha}\otimes\mathfrak{g})}$ and a solution to the flat connection,
\item[$(2)$] $g_{\alpha\beta}\in C^{\infty}(B_{\alpha}\bigcap B_{\beta};G)$,
\item[$(3)$] the sequence, $\{A_{\alpha}\}_{\alpha\in{\mathbb N}}$ and $\{g_{\alpha\beta}\}_{\alpha,\beta\in{\mathbb N}}$ define a $C^{\infty}$ flat connection $A_{\infty}$ on a principal $G$-bundle $P_{\infty}$ over $X$.
\end{enumerate}
\end{Theorem}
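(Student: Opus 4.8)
The plan is to feed the hypothesis ${\rm YM}(A_i)=\|F_{A_i}\|^2_{L^2(X)}\searrow0$ into the weak-compactness theorem just recorded and then upgrade the weak local limits to a smooth flat connection by elliptic regularity. Since $\|F_{A_i}\|_{L^2(X)}\to0$, after discarding finitely many terms we have $\|F_{A_i}\|_{L^2(X)}<\varepsilon$, so the set $\Sigma$ is empty and the preceding theorem applies: passing to a subsequence we obtain the cover $\{B_\alpha\}$, the sections $\sigma_{\alpha,i}$, the local forms $A_\alpha(i)=\sigma^\ast_\alpha A_i$ with $\dd^\ast A_\alpha(i)=0$, the weak $L^2_1$ limits $A_\alpha$ with $\dd^\ast A_\alpha=0$, the transition maps $g_{\alpha\beta}(i)\rightharpoonup g_{\alpha\beta}$ in $L^4_1$ obeying $A_\alpha=g_{\alpha\beta}^{-1}A_\beta g_{\alpha\beta}+g_{\alpha\beta}^{-1}\dd g_{\alpha\beta}$, and the weak curvature limits $F_\alpha(i)\rightharpoonup F_\alpha$ in $L^2(B_\alpha)$. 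Now $\|F_\alpha(i)\|_{L^2(B_\alpha)}\le\|F_{A_i}\|_{L^2(X)}\to0$, so weak lower semicontinuity of the $L^2$-norm forces $F_\alpha=0$; hence each $A_\alpha$ is a Coulomb-gauge connection, $\dd^\ast A_\alpha=0$, solving the flat equation $\dd A_\alpha+[A_\alpha,A_\alpha]=0$ distributionally, with $A_\alpha\in L^2_1(B_\alpha)$. (In fact the Uhlenbeck gauges underlying that theorem satisfy $\|A_\alpha(i)\|_{L^2_{1}(B_\alpha)}\le C\|F_{A_i}\|_{L^2(X)}\to0$, so $A_\alpha\equiv0$ and most of what follows becomes trivial; but I prefer not to lean on this.)

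Next I would run elliptic regularity. The coupled system $\dd^\ast A_\alpha=0$, $\dd A_\alpha=-[A_\alpha,A_\alpha]$ is elliptic of first order in $A_\alpha$ with a quadratic right-hand side, and $A_\alpha\in L^2_1\hookrightarrow L^4$ on the four-manifold. I would bootstrap using the interior estimates for $\dd\oplus\dd^\ast$ --- equivalently, invoke Uhlenbeck's interior regularity theorem for a weak Yang--Mills connection (here flat) in Coulomb gauge with small $L^2$ curvature --- to conclude $A_\alpha\in C^\infty(B_\alpha)$; it is flat since $F_\alpha=0$. With $A_\alpha$ smooth, the patching relation rearranges to $\dd g_{\alpha\beta}=A_\beta g_{\alpha\beta}-g_{\alpha\beta}A_\alpha$, a first-order linear system for $g_{\alpha\beta}$ with smooth coefficients and $|\dd g_{\alpha\beta}|\le C(|A_\alpha|+|A_\beta|)\in L^\infty$; thus $g_{\alpha\beta}\in L^{\infty}_{1}$, and a further bootstrap gives $g_{\alpha\beta}\in C^\infty(B_\alpha\cap B_\beta;G)$.

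Finally I would assemble $P_\infty$ and $A_\infty$. The smooth cocycle identity $g_{\alpha\beta}(i)g_{\beta\gamma}(i)g_{\gamma\alpha}(i)=1$ on triple overlaps passes to the limit: the $L^4_1$-weak convergence gives, by Rellich, strong $L^p$-convergence for $p<4$, hence a.e.\ convergence along a further subsequence, and a product of bounded a.e.-convergent $G$-valued maps converges, so $g_{\alpha\beta}g_{\beta\gamma}g_{\gamma\alpha}=1$. Hence $\{g_{\alpha\beta}\}$ defines a principal $G$-bundle $P_\infty\to X$ and $\{A_\alpha\}$ together with the transformation rule defines a connection $A_\infty$ on $P_\infty$, which is $C^\infty$ by the previous step and flat because each $F_\alpha$ vanishes. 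The step I expect to be the genuine obstacle is the regularity in the critical dimension $n=4$: the nonlinearity $[A_\alpha,A_\alpha]$ is exactly borderline for $L^2_1\hookrightarrow L^4$, so a naive iteration does not gain anything, and one must either exploit the full derivative gain in the elliptic estimate for $\dd\oplus\dd^\ast$ together with the Sobolev multiplication theorems, or appeal to Uhlenbeck's Coulomb-gauge regularity theorem, which is tailored to exactly this borderline situation; one should also confirm that the $L^2$-smallness of $\|F_{A_i}\|$ needed both for $\Sigma$ to be empty and for the local gauge fixing is indeed furnished by ${\rm YM}(A_i)\searrow0$ for $i$ large.
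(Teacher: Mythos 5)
The paper does not actually prove this theorem; it is stated as a direct citation of Sedlacek's Theorem~4.3 together with Feehan's Theorem~35.17, so there is no in-paper argument to compare against. That said, your sketch is a faithful reconstruction of the Sedlacek--Feehan argument and is essentially correct: since ${\rm YM}(A_i)\searrow 0$, the energy eventually drops below the threshold so $\Sigma=\varnothing$ and the preceding weak-compactness theorem yields the local Coulomb-gauge data; weak lower semicontinuity of $\|\cdot\|_{L^2}$ forces $F_\alpha=0$; Uhlenbeck's Coulomb-gauge interior regularity handles the borderline bootstrap in dimension four (you are right that naive iteration stalls at $L^2_1\hookrightarrow L^4$, which is precisely why one must invoke Uhlenbeck's theorem rather than a generic elliptic gain); the patching relation then upgrades $g_{\alpha\beta}$ to $C^\infty$; and the cocycle identity passes to the limit via strong $L^p$ convergence for $p<4$.

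One remark worth promoting from your parenthesis: the shortcut $\|A_\alpha\|_{L^2_1(B_\alpha)}\le\liminf_i\|A_\alpha(i)\|_{L^2_1(B_\alpha)}\le C\liminf_i\|F_{A_i}\|_{L^2(X)}=0$, hence $A_\alpha\equiv 0$, is in fact valid here and gives the cleanest route --- it makes the smoothness of $A_\alpha$ trivial, forces $\dd g_{\alpha\beta}=0$ so the transitions are locally constant (hence smooth), and exhibits $P_\infty$ directly as the flat bundle determined by a holonomy representation. The reason Sedlacek runs the full regularity argument is that his Theorem~4.3 handles general Yang--Mills minimizing sequences whose limiting energy is not assumed to vanish; for the special case ${\rm YM}(A_i)\searrow 0$ needed in this paper you could lean on the shortcut without loss.
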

Then, we have the useful
\begin{Corollary}
Assume the hypotheses of Theorem~{\rm \ref{CY1}}, then
\begin{gather*}\lim_{i\rightarrow\infty}\lambda(A_{i})=\lambda(A_{\infty}),\end{gather*}
where $\lambda(\Gamma)$ is as in Definition~{\rm \ref{C7}}.
\end{Corollary}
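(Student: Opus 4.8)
The plan is to combine the $L^{p}$-continuity of the least eigenvalue from Lemma~\ref{L3.13} with the Uhlenbeck-type convergence of $\{A_{i}\}$ furnished by Theorem~\ref{CY1} and the compactness theorem preceding it. First I would observe that ${\rm YM}(A_{i})\searrow 0$ forces $\|F_{A_{i}}\|_{L^{2}(X)}\to 0$, so that for all large~$i$ the curvature bound~\eqref{C2} holds; moreover the energy does not concentrate, so the blow-up set~$\Sigma$ is empty, $P_{\infty}\cong P$, and the flat limit $A_{\infty}$ may be regarded as a connection on~$P$ itself. Patching the local strong convergences $A_{\alpha}(i)\to A_{\alpha}$ in $L^{p}(B_{\alpha})$ ($1\le p<4$) together with the convergence $g_{\alpha\beta}(i)\to g_{\alpha\beta}$ of the transition functions, by means of a partition of unity subordinate to $\{B_{\alpha}\}$ as in \cite{Feehan2014.09}, produces gauge transformations $u_{i}\in\mathcal{G}_{P}$ such that, for a fixed exponent $p\in(2,4)$,
\begin{gather*}
\|u_{i}^{\ast}(A_{i})-A_{\infty}\|_{L^{p}(X)}\longrightarrow 0\qquad\text{as}\quad i\to\infty.
\end{gather*}

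Next, since $\lambda(\cdot)$ is invariant under gauge transformations, $\lambda(A_{i})=\lambda(u_{i}^{\ast}(A_{i}))$, and for $i$ large both $u_{i}^{\ast}(A_{i})$ and the flat connection $A_{\infty}$ satisfy the curvature hypothesis of Lemma~\ref{L3.13}. Writing $a_{i}:=u_{i}^{\ast}(A_{i})-A_{\infty}$ and applying the upper bound of Lemma~\ref{L3.13} with $A_{0}=A_{\infty}$ gives
\begin{gather*}
\lambda(A_{i})\le\lambda(A_{\infty})+c_{0}\big(1+\lambda^{2}(A_{\infty})\big)\|a_{i}\|^{2}_{L^{p}(X)},
\end{gather*}
whence $\limsup_{i\to\infty}\lambda(A_{i})\le\lambda(A_{\infty})$; in particular $\{\lambda(A_{i})\}$ is bounded, being also nonnegative since $\Delta_{A_{i}}\ge 0$. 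Feeding this boundedness into the lower bound of Lemma~\ref{L3.13},
\begin{gather*}
\lambda(A_{i})\ge\lambda(A_{\infty})-c_{0}\big(1+\lambda^{2}(A_{i})\big)\|a_{i}\|^{2}_{L^{p}(X)}\ge\lambda(A_{\infty})-C\|a_{i}\|^{2}_{L^{p}(X)},
\end{gather*}
with $C$ independent of~$i$, and letting $i\to\infty$ yields $\liminf_{i\to\infty}\lambda(A_{i})\ge\lambda(A_{\infty})$. Combining the two one-sided bounds gives $\lim_{i\to\infty}\lambda(A_{i})=\lambda(A_{\infty})$.

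The eigenvalue sandwich in the second paragraph is routine once the convergence is in hand; the substantive step is the global gauge-fixing, i.e.\ upgrading the local convergences of the compactness theorem preceding Theorem~\ref{CY1} to the genuine $X$-wide estimate $\|u_{i}^{\ast}(A_{i})-A_{\infty}\|_{L^{p}(X)}\to 0$. The obstacle there is the standard one in the construction of Uhlenbeck limits --- reconciling the locally defined Coulomb gauges $\sigma_{\alpha,i}$ across overlaps using only weakly convergent transition functions $g_{\alpha\beta}(i)$ --- but because the energy tends to zero there is no bubbling, and the argument of \cite{Feehan2014.09} applies essentially verbatim, so I would cite it rather than reproduce it. (If one is uneasy about the Sobolev regularity of the $u_{i}$, note that $\lambda(\cdot)$ is defined, and Lemma~\ref{L3.13} extends by density, to connections of Sobolev class $L^{2}_{1}$, which is all that is needed here.)
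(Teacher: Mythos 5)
Your proof is correct and follows the paper's route: apply Lemma~\ref{L3.13} to $a_i := u_i^*(A_i)-A_\infty$, using the strong $L^p$ convergence ($2<p<4$) furnished by the Rellich embedding $L^2_1\Subset L^p$ from the sequential compactness theorem preceding Theorem~\ref{CY1}. You are in fact tighter than the paper at two points — you construct genuine global gauge transformations $u_i\in\mathcal{G}_P$ and note $P_\infty\cong P$ (no bubbling) before asserting $\|a_i\|_{L^p(X)}\to 0$, whereas the paper writes $\|A_i-A_\infty\|_{L^p(X)}$ and bounds it by a sum of local norms without addressing the gauge-matching across overlaps; and you derive the $\limsup$ bound first, so that $\{\lambda(A_i)\}$ is known to be bounded before you invoke the lower bound of Lemma~\ref{L3.13}, which contains the term $\lambda^2(A_i)$, whereas the paper passes to the limit in both one-sided inequalities simultaneously and treats $\lim_{i\to\infty}\lambda(A_i)$ as existing and finite without argument.
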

\begin{proof}
From Theorem~\ref{CY1}, $\sigma^{\ast}_{\alpha,i}A_{i}\rightharpoonup\sigma^{\ast} A_{\infty}$ weakly in ${L^{2}_{1}(B_{\alpha},\Omega^{1}B_{\alpha}\otimes\mathfrak{g}_{P})}$. For $L^{2}_{1}\Subset L^{p}$, ($2<p<4$), hence
\begin{gather*}\|\sigma^{\ast}_{\alpha,i}A_{i}-\sigma^{\ast}A_{\infty}\|_{L^{p}(B_{\alpha})}\rightarrow0\qquad\text{as}\quad i\rightarrow\infty,\end{gather*}
for sequences of local sections $\{\sigma_{\alpha,m}\}_{m\in\mathbb{N}}$ of $P\upharpoonright B_{\alpha}$ and a local section $\sigma_{\alpha}$ of $P_{\infty}\upharpoonright B_{\alpha}$ and
\begin{gather*}\|A_{i}-A_{\infty}\|_{L^{p}(X)}\leq\sum_{a}\|\sigma^{\ast}_{\alpha,m}A_{m}-\sigma^{\ast}A_{\infty}\|_{L^{p}(B_{\alpha})}.\end{gather*}
Hence from Lemma \ref{L3.13}, we have
\begin{gather*}\lim_{i\rightarrow\infty}\lambda(A_{i})\geq\lambda(A_{\infty})-c_{0}\big(1+\lim_{i\rightarrow\infty}\lambda^{2}(A_{i})\big)\lim_{i\rightarrow\infty}\|a_{i}\|_{L^{p}(X)},\end{gather*}
and
\begin{gather*}\lim_{i\rightarrow\infty}\lambda(A_{i})\leq\lambda(A_{\infty})
+c_{0}\big(1+\lambda^{2}(A_{\infty})\big)\lim_{i\rightarrow\infty}\|a_{i}\|_{L^{p}(X)}.\end{gather*}
Then we obtain
\begin{gather*}\lim_{i\rightarrow\infty}\lambda(A_{i})=\lambda(A_{\infty}).\tag*{\qed}\end{gather*}\renewcommand{\qed}{}
\end{proof}

 Then we have
\begin{Theorem}\label{T3.16}Let $G$ be a compact Lie group, $P$ be a $G$-bundle over a closed, smooth $4$-mani\-fold and endowed with a smooth Riemannian metric~$g$. Then there is a positive constant $\varepsilon=\varepsilon(g)\in(0,1]$ with the following significance. Suppose all flat connections on~$P$ are non-degenerate. If~$A$ is a~$C^{\infty}$ connection on $P$ such that
\begin{gather*}
\|F_{A}\|_{L^{2}(X)}\leq\varepsilon,
\end{gather*}
and $\lambda(A)$ is as in Definition~{\rm \ref{C7}}, then
\begin{gather*}\lambda(A)\geq\frac{\lambda}{2},\end{gather*}
where $\lambda$ is the positive constant in Proposition~{\rm \ref{P4}}.
\end{Theorem}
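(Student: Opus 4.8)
The plan is to argue by contradiction, exactly mirroring the strategy used to prove Theorem~\ref{T3.10} but replacing the Uhlenbeck--Coulomb gauge input (which requires $2p>n$ and hence is unavailable for the $L^2$-norm in dimension four) by the weak-convergence compactness statement in Theorem~\ref{CY1} together with the continuity Corollary just established. Suppose the conclusion fails: then for every $i\in\mathbb{N}$ there is a $C^\infty$ connection $A_i$ on $P$ with $\|F_{A_i}\|_{L^2(X)}\leq 1/i$ and $\lambda(A_i)<\lambda/2$, where $\lambda$ is the uniform positive lower bound from Proposition~\ref{P4}. In particular ${\rm YM}(A_i)=\|F_{A_i}\|_{L^2(X)}^2\searrow 0$, so the hypotheses of Theorem~\ref{CY1} are met.

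Next I would feed the sequence $\{A_i\}$ into Theorem~\ref{CY1}: after passing to a subsequence, the local connection forms $\sigma_{\alpha,i}^\ast A_i$ converge weakly in $L^2_{1}(B_\alpha)$ to the local forms of a $C^\infty$ flat connection $A_\infty$ on a (possibly different) principal $G$-bundle $P_\infty$ over $X$. Since $\varepsilon$ has been chosen small enough that the bubbling set $\Sigma$ is empty, no energy is lost and $P_\infty\cong P$, so $A_\infty$ is a flat connection on $P$ and $[A_\infty]\in M(P,g)$. By the Rellich embedding $L^2_1\Subset L^p$ for $2<p<4$, the weak convergence upgrades to $\|A_i-A_\infty\|_{L^p(X)}\to 0$ along the subsequence, exactly as in the Corollary preceding this theorem. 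Then the $L^p$-continuity of the least eigenvalue, Corollary following Theorem~\ref{CY1}, gives
\begin{gather*}
\lim_{i\to\infty}\lambda(A_i)=\lambda(A_\infty).
\end{gather*}
Since all flat connections on $P$ are non-degenerate, Proposition~\ref{P4} yields $\lambda(A_\infty)=\lambda([A_\infty])\geq\lambda>\lambda/2$, contradicting $\lambda(A_i)<\lambda/2$ for all $i$. Hence such an $\varepsilon=\varepsilon(g)\in(0,1]$ exists and $\lambda(A)\geq\lambda/2$ whenever $\|F_{A}\|_{L^2(X)}\leq\varepsilon$.

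The main obstacle is the bookkeeping in the globalization step: Theorem~\ref{CY1} produces only local gauge-fixed forms $A_\alpha$ on a cover $\{B_\alpha\}$ with transition functions $g_{\alpha\beta}$, so I must verify that the limit genuinely defines a flat connection on the \emph{same} bundle $P$ (this is where the emptiness of $\Sigma$, guaranteed by choosing $\varepsilon$ small, is essential — otherwise the topological type could jump) and that the patchwise estimate $\|A_i-A_\infty\|_{L^p(X)}\leq\sum_\alpha\|\sigma_{\alpha,i}^\ast A_i-\sigma_\alpha^\ast A_\infty\|_{L^p(B_\alpha)}$ really forces global $L^p$-convergence, uniformly in the finitely many patches. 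Everything else is a direct transcription of the dimension-$n$ argument in Theorem~\ref{T3.10}, with Lemma~\ref{L3.13} playing the role that Lemma~\ref{C4} played there.
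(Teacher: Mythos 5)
Your proposal is correct and follows essentially the same contradiction argument as the paper: take a sequence with $\|F_{A_i}\|_{L^2}\to 0$ violating the eigenvalue bound, invoke the Sedlacek-type compactness (Theorem~\ref{CY1}) to extract a flat limit $A_\infty$, and use the Corollary on $L^p$-continuity of $\lambda(\cdot)$ to contradict the uniform lower bound $\lambda$ from Proposition~\ref{P4}. The concern you flag about the limiting bundle $P_\infty$ coinciding with $P$ is a genuine subtlety that the paper's proof also glosses over; your handling of it (via emptiness of the bubbling set $\Sigma$) is the right remedy.
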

\begin{proof}
Suppose that the constant $\lambda\in(0,1]$ does not exist. We may then choose a sequence $\{A_{i}\}_{i\in{\mathbb N}}$ of connections on~$P$ such that $\|F_{A_{i}}\|_{L^{2}(X)}\rightarrow0$ and $\lambda(A_{i})\rightarrow0$ as $i\rightarrow\infty$. Since $\lim\limits_{i\rightarrow\infty}\lambda(A_{i})=\lambda(A_{\infty})$ and $\lambda(A_{\infty})>0$ by $A_{\infty}$ is a f\/lat connection, then it contradicts our initial assumption regarding the sequence $\{A_{i}\}_{i\in{\mathbb N}}$.
\end{proof}

\section{Proof of the main Theorem \ref{T1.1}}\label{section4}
Now, we begin to prove the energy gap result for the complex Yang--Mills equations. At f\/irst, we prove the complex Yang--Mills equations will be reduce to pure Yang--Mills equation under the certain conditions for $g$, $G$, $P$, and~$X$.
\begin{Proposition}\label{P4.1}Let $X$ be a closed, oriented, smooth Riemannian manifold of dimension $n\geq2$ with smooth Riemannian metric~$g$, $P$ be a~$G$-bundle over $X$, let $2p>n$ when $n\neq 4$ or $p\geq2$ when $n=4$. Then there exists a positive constant ${\varepsilon=\varepsilon(g,n,p)}$ with the following significance. Suppose that all flat connections on $P$ are non-degenerate. If the pair $(A,\phi)$ is a~$C^{\infty}$-solution of complex Yang--Mills equations over $X$ and the curvature $F_{A}$ of connec\-tion~$A$ obeys
\begin{gather*}
\|F_{A}\|_{L^{p}(X)}\leq\varepsilon,
\end{gather*}
then $\phi$ vanishes.
\end{Proposition}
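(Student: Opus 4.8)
The plan is to test the Euler--Lagrange equation for $\phi$ against $\phi$ itself, estimating one side by the smallness of $\|F_A\|_{L^p(X)}$ and bounding the other side from below by the uniform spectral gap for $\Delta_A$ established in Section~\ref{section3}. Since $(A,\phi)$ solves the complex Yang--Mills equations, the extra field satisfies the Bochner-type identity~\eqref{Y1} of Proposition~\ref{F3}; feeding it into the Weitzenb\"ock formula~\eqref{40} applied to $\phi$, namely $\Delta_A\phi=\nabla^{\ast}_{A}\nabla_A\phi+\operatorname{Ric}\circ\phi+\ast[\ast F_A,\phi]$, cancels the rough Laplacian and produces the pointwise identity $\Delta_A\phi=\ast[\ast F_A,\phi]-\ast[\ast(\phi\wedge\phi),\phi]$.

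Next I would pair this with $\phi$ in $L^2(X)$. Rewriting $\ast[\ast(\phi\wedge\phi),\phi]=-\big(\nabla^{\ast}_{A}\nabla_A\phi+\operatorname{Ric}\circ\phi\big)$ via~\eqref{Y1} and then using the integrated identity~\eqref{E2.3}, one gets $\langle\ast[\ast(\phi\wedge\phi),\phi],\phi\rangle_{L^2(X)}=-\big(\|\nabla_A\phi\|^2_{L^2(X)}+\langle\operatorname{Ric}\circ\phi,\phi\rangle_{L^2(X)}\big)=2\|\phi\wedge\phi\|^2_{L^2(X)}\geq0$, so that
\[
\langle\Delta_A\phi,\phi\rangle_{L^2(X)}=\langle\ast[\ast F_A,\phi],\phi\rangle_{L^2(X)}-2\|\phi\wedge\phi\|^2_{L^2(X)}\leq\langle\ast[\ast F_A,\phi],\phi\rangle_{L^2(X)}.
\]
Since $|\ast[\ast F_A,\phi]|\leq c\,|F_A|\,|\phi|$ pointwise, the right-hand side is at most $c\int_X|F_A|\,|\phi|^2\leq c\,\|\phi\|^2_{L^{\infty}(X)}\,\|F_A\|_{L^1(X)}$, and the a priori estimate $\|\phi\|_{L^{\infty}(X)}\leq C\|\phi\|_{L^2(X)}$ of Theorem~\ref{F6} together with H\"older's inequality on the compact manifold $X$ (so $\|F_A\|_{L^1(X)}\leq C\|F_A\|_{L^p(X)}$) gives $\langle\Delta_A\phi,\phi\rangle_{L^2(X)}\leq C\|F_A\|_{L^p(X)}\|\phi\|^2_{L^2(X)}\leq C\varepsilon\|\phi\|^2_{L^2(X)}$ for some $C=C(g,n,p)$.

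For the matching lower bound I would invoke Section~\ref{section3}: under the non-degeneracy hypothesis, after shrinking $\varepsilon=\varepsilon(g,n,p)$ if necessary, Theorem~\ref{T3.10} (valid when $n\neq4$, where $2p>n$ by hypothesis) or Theorem~\ref{T3.16} (when $n=4$, using $\|F_A\|_{L^2(X)}\leq C\|F_A\|_{L^p(X)}\leq C\varepsilon$ since $p\geq2$) yields $\lambda(A)\geq\lambda/2$, where $\lambda$ is the uniform constant of Proposition~\ref{P4}. By Definition~\ref{C7}, $\langle\Delta_A\phi,\phi\rangle_{L^2(X)}\geq\lambda(A)\|\phi\|^2_{L^2(X)}\geq\tfrac{\lambda}{2}\|\phi\|^2_{L^2(X)}$, so the two estimates combine to $\tfrac{\lambda}{2}\|\phi\|^2_{L^2(X)}\leq C\varepsilon\|\phi\|^2_{L^2(X)}$; choosing $\varepsilon<\lambda/(4C)$ (and small enough for the quoted theorems to apply) forces $\|\phi\|_{L^2(X)}=0$, i.e.\ $\phi\equiv0$.

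The conceptual weight sits entirely in the uniform positivity $\lambda(A)\geq\lambda/2$, which is precisely where the non-degeneracy of the flat connections and the Uhlenbeck-type gauge fixing of Section~\ref{section3} are used; the rest is bookkeeping --- checking that a single $\varepsilon$ serves simultaneously Theorems~\ref{T2}, \ref{T3.10}/\ref{T3.16}, \ref{F6} and the requirement $C\varepsilon<\lambda/4$, with every constant depending only on $g$, $n$, $p$ (the bundle $P$ and group $G$ being fixed). It is worth noting that the favourable sign of the $-2\|\phi\wedge\phi\|^2_{L^2(X)}$ term means no separate smallness bound on $\|\phi\|_{L^2(X)}$ is needed.
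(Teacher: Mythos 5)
Your proof is correct and follows essentially the same strategy as the paper's: bound $\langle\Delta_A\phi,\phi\rangle_{L^2(X)}$ from below by the uniform spectral gap $\lambda(A)\geq\lambda/2$ of Theorems~\ref{T3.10}/\ref{T3.16}, bound it from above by a term of size $O(\|F_A\|_{L^p(X)}\|\phi\|^2_{L^2(X)})$ using the Bochner-type identity and Theorem~\ref{F6}, and squeeze. The only cosmetic difference is the route to the key integral inequality: you subtract the Weitzenb\"ock formula~\eqref{40} from~\eqref{Y1} to isolate $\Delta_A\phi=\ast[\ast F_A,\phi]-\ast[\ast(\phi\wedge\phi),\phi]$ and then pair with $\phi$, whereas the paper writes down directly the equivalent scalar identity $\|\dd_A\phi\|^2_{L^2}+\|\dd^*_A\phi\|^2_{L^2}+2\|\phi\wedge\phi\|^2_{L^2}-2\langle F_A,\phi\wedge\phi\rangle_{L^2}=0$; your version is a touch more explicit about where Theorem~\ref{F6} enters (the paper quietly absorbs $\|\phi\|_{L^{2q}}\lesssim\|\phi\|_{L^2}$ into the constant), and your observation that the $-2\|\phi\wedge\phi\|^2_{L^2}$ term has the favourable sign is the same observation the paper makes when it drops that term.
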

\begin{proof} If we suppose that
\begin{gather*}\|F_{A}\|_{L^{p}(X)}\leq\varepsilon,\end{gather*}
where $p$ and $\varepsilon$ are the constants satisfying the hypotheses in Theorems~\ref{T3.10} and~\ref{T3.16}, then there exists a positive constant $\lambda$ such that
\begin{gather*}\|\dd_{A}v\|^{2}_{L^{2}(X)}+\|\dd^{\ast}_{A}v\|^{2}_{L^{2}(X)}\geq\lambda/2\|v\|^{2}_{L^{2}(X)}, \qquad \forall\, v\in\Omega^{1}(X,\mathfrak{g}_{P}).\end{gather*}
We have an identity for the solution of complex Yang--Mills equations
\begin{gather*}
\|\dd_{A}\phi\|^{2}_{L^{2}(X)}+\|\dd^{\ast}_{A}\phi\|^{2}_{L^{2}(X)}+2\|\phi\wedge\phi\|^{2}_{L^{2}(X)}-2\langle F_{A},\phi\wedge\phi\rangle_{L^{2}(X)}=0.
\end{gather*}
Hence, we have
\begin{gather*}
\lambda/2\|\phi\|^{2}_{L^{2}(X)} \leq\|\dd_{A}\phi\|^{2}_{L^{2}(X)}+\|\dd^{\ast}_{A}\phi\|^{2}_{L^{2}(X)} \leq|\langle F_{A},\phi\wedge\phi\rangle_{L^{2}(X)}|\\
\hphantom{\lambda/2\|\phi\|^{2}_{L^{2}(X)}}{} \leq C\|F_{A}\|_{L^{p}(X)}\|\phi\|^{2}_{L^{2q}(X)}\leq C\|F_{A}\|_{L^{p}(X)}\|\phi\|^{2}_{L^{2}(X)},
\end{gather*}
where $1/q=1-1/p$, $C=C(g,p)$. We can choose $\|F_{A}\|_{L^{p}(X)}\leq\varepsilon$ suf\/f\/iciently small such that $C\varepsilon\leq\lambda/4$, then the extra f\/ield $\phi$ vanishes.
\end{proof}

\begin{proof}[Proof Theorem \ref{T1.1}]
From Proposition~\ref{P4.1}, the complex Yang--Mills equations reduce to the pure Yang--Mills equation $\dd_{A}^{\ast}F_{A}=0$ and the curvature obeys $\|F_{A}\|_{L^{p}(X)}\leq\varepsilon$, then by the energy gap of Yang--Mills connection, we obtain that the connection $A$ is f\/lat.
\end{proof}

\begin{proof}[Proof Corollary \ref{C1.2}]
For a smooth solution $(A,\phi)$ of complex f\/lat connection, from the identity $F_{A}=\phi\wedge\phi$ and we apply Theorem~\ref{F6} to obtain
\begin{gather*}\|F_{A}\|_{L^{p}(X)}\leq\|\phi\wedge\phi\|_{L^{p}(X)}\leq C\|\phi\|^{2}_{L^{2}(X)},\end{gather*}
where $C=C(g,n,p)$. We can choose $\|\phi\|_{L^{2}(X)}$ suf\/f\/iciently small such that $\|F_{A}\|_{L^{p}(X)}\leq\varepsilon$, where~$\varepsilon$ is the constant in Theorem~\ref{T1.1}. Then we can prove that $\phi$ vanishes and $A$ is a~f\/lat connection.

It is easy to see the map $(A,\phi)\mapsto\|\phi\|_{L^{2}(X)}$ is continuous, then the moduli space of complex f\/lat connections is non-connected.
\end{proof}

\subsection*{Acknowledgements}
I would like to thank Karen Uhlenbeck and Michael Gagliardo for helpful comments regarding their article \cite{Gagliardo/Uhlenbeck:2012} and Paul Feehan for helpful comments regarding his articles~\cite{Feehan2014.09,Feehan2014.12,Feehan2015}. I thank the anonymous referees for a careful reading of my article and helpful comments and corrections. This work is partially supported by Wu Wen-Tsun Key Laboratory of Mathematics of Chinese Academy of Sciences at USTC.

\pdfbookmark[1]{References}{ref}
\LastPageEnding

\end{document}